\documentclass[11pt]{amsart}
\usepackage[active]{srcltx}
\usepackage{mathrsfs}
\usepackage[T1]{fontenc}

\usepackage{latexsym,enumerate}
\usepackage{amsmath,amssymb,amsthm,amsfonts,latexsym}
\usepackage{graphicx}
\usepackage[bookmarks]{hyperref}

\hypersetup{backref, colorlinks=true}
\hypersetup{backref, colorlinks=true, colorlinks   = true,
	urlcolor     = blue, linkcolor = blue, citecolor   = magenta
}

\def\adh#1{\overline{#1}}

\let\=\partial

\newtheorem {pro}{Proposition}[section]
\newtheorem {thm}[pro]{Theorem}
\newtheorem {cor}[pro]{Corollary}
\newtheorem{lem}[pro]{Lemma}

\theoremstyle{definition}
\newtheorem {dfn}[pro]{Definition}

\newtheorem {ste}{Step}

\newcommand{\Q} {\mathbb{Q}}

\newcommand{\tim}{{t\in \R^m}}

\newcommand{\pc}{\hat{r}}
\newcommand{\eqr}{\sim}
\newcommand{\smn}{\mathcal{S}_{m+n}}
\newcommand{\eto}{\mbox{ and }}
\newcommand{\xca}{\mathcal{X}}\newcommand{\hcr}{{\mathscr{H}}}
\newcommand{\Pa}{\mathcal{P}}
\newcommand{\s}{\mathcal{S}}

\newcommand{\bil}{\big{(}}
\newcommand{\bir}{\big{)}}

\newcommand{\jac}{\mbox{jac}\,}

\newcommand{\betat}{\tilde{\beta}}

\newcommand{\ica}{\mathcal{I}}\newcommand{\icb}{\overline{\mathcal{I}}}
\newcommand{\R}{\mathbb{R}}

\newcommand{\N}{\mathbb{N}}
\newcommand{\cc}{\mathscr{C}}

\newcommand{\et}{\quad \mbox{and} \quad }

\newcommand{\hn}{\mathcal{H}}

\newcommand{\fbf}{\mathbf{f}}
\newcommand{\kbf}{\mathbf{k}}

\newcommand{\mba}{ {\overline{M}}}

\newcommand{\omd}{{\Omega}}
\newcommand{\mep}{ {M^\ep}}
\newcommand{\nep}{ {N^\ep}}

\newcommand{\xt}{{\tilde{x}}}

\newcommand{\D}{\mathcal{D}}

\newcommand{\F}{\mathcal{F}}
\newcommand{\ep}{\varepsilon}
\newcommand{\E}{\mathcal{E}}
\newcommand{\pa}{\partial}

\newcommand{\hh}{\mathcal{V}}

\newcommand{\bou}{\mathbf{B}}
\newcommand{\sph}{\mathbf{S}}
\newcommand{\orn}{{0_{\R^n}}}
\newcommand{\supp}{\mbox{\rm supp}}
\newcommand{\xo}{{x_0}}

\newcommand{\Bb}{\overline{ \mathbf{B}}}

\title[]{ Sobolev embedding theorem and  subanalytic measures}

\makeatletter
 \thanks{Research partially supported
by the NCN grant  2021/43/B/ST1/02359.}
\@namedef{subjclassname@2020}{%
\textup{2020} Mathematics Subject Classification}
\makeatletter

\@addtoreset{equation}{section}

\makeatother

\author[ G. Valette]{ Guillaume Valette}

\address[G. Valette]{Instytut Matematyki Uniwersytetu
Jagiello\'nskiego, ul. S. \L ojasiewicza 6, Krak\'ow, Poland}\email{guillaume.valette@im.uj.edu.pl}

\keywords{Sobolev embedding,  subanalytic mapping, push-forward measure, subanalytic manifold, subanalytic measure, Lipschitz kernel}

\subjclass[2020]{46E35, 58C35, 32B20, 14P10}

\begin{document} \maketitle
\begin{abstract}
We focus on Borel measures that have a globally subanalytic density function. We prove, given such a measure $\mu$ on a set $A$ and a globally subanalytic mapping $\Phi:A\to \Omega$, with $\Omega$ bounded open subset of $\mathbb{R}^n$, a Sobolev embedding theorem for the Sobolev space $W^{k,p}_{\Phi_*\mu}(\Omega)$ of the push-forward measure $\Phi_*\mu$.   We derive  an embedding of $W^{k,p}_{\Phi_*\mu}(\Omega)$  into the space of inner Lipschitz functions and give an application to  kernel theory.
	\end{abstract}
	\section{Introduction}
	It is well-known that Sobolev spaces of domains and manifolds that admit exclusively metrically conical singularities at frontier points enjoy nicer properties than those having cusps.
	In the 70's of the preceding century, R. A. Adams \cite{adamsa,adamsl} however achieved a satisfying  version of  Sobolev embedding theorem   on spaces with cusps. Much more  recently, the progress of Lipschitz geometry of subanalytic sets \cite{livre} made it possible to prove several results on Sobolev spaces  of subanalytic manifolds and domains   without any restriction on the metric geometry of singularities \cite{poincwirt, poincfried, trace, lprime, wruplitt}, and some applications to PDEs were provided \cite{gupel, kuplik}.


We focus in this article on the Sobolev spaces relative to the push-forward of a globally subanalytic measure  by a globally subanalytic mapping, prove a Sobolev embedding theorem (Theorem \ref{thm_sobolev_embedding}), and, using the Morrey embedding achieved in \cite{lprime}, derive a natural embedding of   Sobolev spaces into the space of inner Lipschitz functions (Corollary \ref{cor_mor}).

Analytic functions and mappings appear naturally in applied mathematics and statistics. Empirical observations are usually modelized by random variables that are distributed with respect to the same density and are often modified by some analytic functions. Our aim   is to provide singular models for kernel methods, which requires adequate versions of Sobolev embedding theorems.  Before giving more details on the content of the present article, let us make it more precise.

\noindent {\bf Extension of functions via kernel methods.} Machine learning often demands to find (see \cite{bishop}), given a set of observations $\xca=(x_i)_{i\in \N}\subset \R^n$ and some values $y_1,\dots, y_l$, a function $f:\xca\to \R$ that satisfies $f(x_i)=y_i$ for $i\le l$, that will make a reasonable guess for the missing values at the $x_i$, $i>l$, called the unlabeled points. A natural way to have a formula for such an $f$ is to search for a function of type $\sum  \alpha_i f_i$, $\alpha_i\in \R$, where the $f_i$ are some chosen functions called hypotheses, generally analytic or polynomial.  This approach has the significant advantage to lead to a linear problem, which has a solution if the $f_i$ are sufficiently independent and  numerous.    The main issue is then  the so-called overfitting problem, i.e. to avoid spikes that might arise from the large number of required conditions on $f$. To this end, authors then generally introduce a loss function in order to penalize too complicated solutions. An efficient way to achieve it is to choose a positive semi-definite kernel $\kbf(x,x')$ \cite{survey1, survey2}, which has the advantage to provide both  hypothesis functions and the loss function. The main issue is however often to find an adequate kernel. In the quest of the most relevant kernel, authors mainly pay attention to three features, considering that a good kernel should help to: $(i)$ reduce redundancy of relevant information (sparsify), $(ii)$ assess the complexity of a solution (eliminate overfitting)  $(iii)$  extract information from  the unlabeled observations (manifold learning).

A positive semi-definite kernel is characterized by a mapping  into a Hilbert space  $\phi:\xca \to \mathscr{H}$, called a feature map\footnote{This introduction contains a few basic facts on kernel methods but is far from being exhaustive. We refer to nice existing surveys on the subject, like \cite{survey1,survey2}, for full details.}.  The above point $(i)$ involves that elements that are remote from each other in $\mathscr{H}$ have remote preimages in $\xca$, or equivalently that elements that are close in $\xca$ have  close images, i.e. that $\phi$ be uniformly continuous. Points in a cloud $\xca$ are generally connected to neighbors, giving rise to a graph and hence to an inner metric $d_\xca$.
Most of the considered kernels, if not all, are H\"older continuous with respect to the inner metric, i.e.
$$||\phi(x)-\phi(x')||_\hcr \le C d_\xca (x,x')^\theta, $$
 for some $\theta$ and $C$  positive independent of $x,x'\in \xca$. In this situation, we can regard every $f\in \hcr$ as a function on $\xca$ by setting $f(x):=\langle f,\phi(x) \rangle_\hcr$, and we then have on $\xca$
 $$|f(x)-f(x')|=\langle f,\phi(x)-\phi(x') \rangle_\hcr \le C||f||_\hcr \; d_\xca (x,x')^\theta ,$$
 which means that $\hcr$ embeds continuously into  the space $\check{\cc}^{0,\theta}(\xca)$ of functions that are H\"older continuous with respect to the inner metric. It is worthy of notice that on the other hand, if we have a  continuous linear map $ \Lambda: \hcr \rightarrow \check{\cc}^{0,\theta}(\xca)$ then, since for every $x\in \xca$ the Dirac $\delta_x$ belongs to the topological dual of $\check{\cc}^{0,\theta}(\xca)$, we deduce that   $\Lambda^*\delta_x\in \hcr'\simeq \hcr$, so that $x\mapsto \phi(x) :=\Lambda^*\delta_x \in \hcr$ defines a feature map and
 \begin{equation}\label{eq_k_holder}||\phi(x)-\phi(x')||_\hcr=\sup_{||f||_\hcr=1} |\langle f,\phi(x)-\phi(x') \rangle_\hcr| =\sup_{||f||_\hcr=1}|f(x)-f(x')| \le C  d_\xca (x,x')^\theta,\end{equation}
 since $||f||_{\check{\cc}^{0,\theta}(\xca)} \le C||f||_\hcr$  for some constant $C$ independent of $f$ (because $\Lambda$ is linear continuous).
 Hence, the problems of mapping H\"older continuously $\xca$ into a Hilbert  space  and the problem of finding a Hilbert space mapping continuously into $\check{\cc}^{0,\theta}(\xca)$ are completely equivalent.

  An efficient way to address the point $(ii)$ is to choose a Hilbert space whose norm estimates the size of the derivative. Many authors therefore choose to work in Sobolev spaces, often relative to the distribution measure of the observations, or to minimize  the graph Laplacian \cite{towards,bns, green,hao, slepthor}.     If one regards $\xca$ as a graph, it is then a one-dimensional set, and $W^{1,2}(\xca) \hookrightarrow \check{\cc}^{0,\frac{1}{2}}(\xca)$ in this case.
   Clouds being generally rather large, one-dimensional models are however often considered not enough accurate. In the setting where the data are  dense in a higher dimensional set,   such an embedding no longer exists. The space $W^{1,p}(\adh \xca)$, $p>m:=\dim \adh\xca$ (the closure of $\xca$), which embeds into  $\check{\cc}^{0,1-\frac{m}{p}} (\adh\xca)$ via Morrey's embedding if $\adh\xca$ is a Lipschitz manifold, is then a relevant alternative \cite{calder, slepthor}, with the drawback that it is not a Hilbert space but just a semi-Hilbert space \cite{zhangxu}, and the resulting mathematical problem is of higher degree. This is why some authors work with higher order derivatives \cite{hao,rossaco, zou}.  Sobolev's embedding theorem ensures that $W^{k,2}(\adh\xca)$ embeds into $\cc^{0,1}(\xca)$ for $k\ge \frac{\dim \adh\xca}{2}+1$. This nevertheless requires the data to be concentrated on a metrically conical set, which has in general no reason to happen, or to work with the norm of the ambient space.

 If we regard  $\xca$ as  a set of random variables {\it i.i.d.} with respect to a density measure $\mu$, this measure is natural to compute the $L^2$ norm.  First, it can be estimated from the unlabeled observations  (see $(iii)$), which is clearly an asset since these data are much easier to get.
Secondly, the problem of estimating the $L^2$ norm with respect to a measure which is supported outside the cloud would involve data with no real signification  for our problem, and in addition the resulting problem may be regarded as ill-posed for impossible to be solved on the basis of the data only.

\noindent{\bf Our approach.}  An obstacle is that   
working  with a  singular measure requires to have an adequate version of Sobolev embedding theorem.
As already mentioned, if $X$ has only metrically conical singularities then (see \cite[Theorem 4.12]{adamsl}) $k\ge  \frac{\dim X}{2}+1 $ suffices to embed $W^{k,2}(X)$
into $\check{\cc}^{0,1}(X)$ (the inner Lipschitz functions, see (\ref{eq_inner_lips})).
 The situation is however more intricate when cusps arise. For instance, if $$X=\{(x,y)\in (0,1)^2: |y| \le e^ {-1/x^2}\}$$ and $u(x,y)=\frac{1}{x}$ then $u\in W^{k,2}(X)$ for all $k$ but is not inner Lipschitz. This is of course because the contact between the two branches of curve $y=\pm e^ {-1/x^2}$ is flat. On a non flat cusp $X=\{|y|\le x^l\}$, one can show that $W^{k,p}(X)\hookrightarrow \check{\cc}^{0,1}(X)$ for $k$ large. The smallest such $k$ then depends on $l$ and is no longer determined by the dimension of $X$.  This was established by  Adams on all spaces with cusps and horns \cite{adamsa, adamsl}, and was proved in \cite{lprime} to hold on an arbitrary bounded subanalytic manifold (with the Hausdorff measure). 

 Clouds are generally modelized by engineers as sets of random variables {\it i.i.d} with respect to the same probability measure $\mu$, and it is not rare to take the image of the data under a morphism $\Phi$ (dimension reduction, neural networks), almost always subanalytic. These pushed-forward random variables  are then equidistributed with respect to $\Phi_*\mu$.
 In the present article, we thus generalize the results of \cite{lprime} to  measures that are push-forward of a globally subanalytic measure $\mu$ (Definition \ref{dfn_sub_meas}) under a  globally  subanalytic mapping. We restrict ourselves to globally subanalytic measures  in order to avoid flat vanishing of the density, which would be prone to generate pathological behavior of Sobolev spaces.  That $\mu$ is   globally subanalytic  is  however not definitely necessary, as it obviously suffices that the density be $\sim $ to a globally subanalytic function ($f\sim g$ if and only if $\frac{f}{C}\le g\le Cf$ for some $C$), so that we cover all the measures that have a density bounded away from $0$ and $\infty$, which is a current framework.
 
  Dealing with push-forward measures significantly  complicates our task as the density of $\Phi_*\mu$ may tend to zero at some points even if $\Phi$ is Lipschitz and $\mu$ is the Lebesgue measure. For instance if $\Phi(x)=\sum_{i=1}^4x_i^2$ on $\R^4$ endowed with the Lebesgue measure then the density of $\Phi_*\mu$ tends to $0$ at $0$. This also forces us to work with non subanalytic density functions (see section \ref{sect_sub_measure}).
  
  We show that, if  $(A,\mu)$ is a globally subanalytic measured space (Definition \ref{dfn_sub_meas}) and $\Phi:(A,\mu)\to \omd$ a bounded globally subanalytic mapping (section \ref{sect_sub}), then for $k$ large enough $W^{k,2}_{\Phi_*\mu}(\omd)$ embeds continuously into $\check{\cc}^{0,1}(X)$, $X$ being the support of $ \Phi_*\mu$ (the closure of $\Phi(\xca)$, with probability $1$). As a byproduct, we can construct a positive Lipschitz kernel (Corollary \ref{cor_ker}), definite if $\mu$ is finite, with $W^{k,2}_{\Phi_*\mu}(X)$ as representing Hilbert space.
  
  On a discrete model,  kernels are easy to produce.  On the other hand, continuous models can be regarded as more accurate when the cloud is huge,  and constructing on singular sets kernels having the above properties $(i-iii)$ is, in the opinion of the author, a worthwhile challenge, as it can lead to qualitative information on  solutions,  valuable to ensure the consistency of algorithms.
  
  

\section{Some notations and conventions}\label{sect_notations}
By ``globally subanalytic manifold'' or ``definable manifold'' (definable will be used as a shortcut of globally subanalytic), we will mean a globally subanalytic set which is a  $\cc^\infty$ submanifold of $\R^n$. All the considered measures will be defined on Borel $\sigma$-algebras.

	\begin{itemize}
	 \item 	 $\bou(x,\ep)$, open  ball in $\R^n$  of radius $\ep$   centered  at $x\in \R^n$ (for the  euclidean distance). $\sph(x,\ep)$ stands for the corresponding sphere and $\Bb(x,\ep)$ for the closed ball.


		 	\item 	$|x|$  and $d(x,S)$,    euclidean norm of $x\in \R^n$ and euclidean distance  to  $S\subset \R^n$.

		 \item 	$\adh A$, closure of a set $A\subset \R^n$. We also set $\delta A:= \adh A \setminus A$ and $fr(A)=\adh A\setminus int(A)$, where $int(A)$ is the interior of $A$ in $\R^n$.
\item   $\orn$,  origin of $\R^n$. The subscript $\R^n$ will however be omitted when the ambient space is obvious from the context.

\item A mapping $f$ is {\bf Lipschitz} if $|f(x)-f(x')|\le C|x-x'|$ for some $C$ independent of $x,x'$. We say {\bf $C$-Lipschitz} if we wish to specify the constant. A homeomorphism $h$ is {\bf bi-Lipschitz} if $\frac{|x-x'|}{C}  \le |h(x)-h(x')|\le C|x-x'|$ for all $x,x'$.

	\item $D_xh$   derivative of a mapping $h$ at a point $x$.  



	\item 	$\nabla u$, gradient of $u:M \to \R$ (as distribution), $M\subset \R^n$ submanifold.
\item $L^p_\mu(M)$, $L^p$ space of $M$ relative to a measure $\mu$. The norm is denoted $||u||_{L^p_{ \mu}(M)}$.


	\item 	Given   $p\in [1,\infty)$ and a measure $\mu$ on an open subset $\omd$ of $\R^n$, we set for $k\in \N$
	$$\cc^{k,p}_\mu(\omd):= \{u\in \cc^{k}(\omd) :  \forall \alpha\in \N^n \mbox{ satisfying } |\alpha| \le k,  \frac{\pa^{|\alpha|}u}{\pa x^\alpha} \in L^p_\mu(\omd)\}.$$
 We then denote by  $W^{k,p}_\mu(\omd)$ the  completion of   $\cc^{k,p}_\mu(\omd)$ equipped with
	$$ ||u||_{W^{k,p}_\mu(\omd) }:= \sum_{i\le k} || D^i u||_{L^p _\mu (\omd)}, \qquad \mbox{ where }\quad D^i u:= \left(\frac{\pa^{|\alpha|}u}{\pa x^\alpha} \right)_{|\alpha| =i},$$
		identifying functions that match $\mu$-almost everywhere. 
 In the case $k=1$, we will also use $W^{1,p}_\mu(M)$ with $M$ submanifold of $\R^n$  and $\mu$ Borel measure on $M$. It is defined in the same way, as  the completion of  $(\cc^{1,p}_\mu (M),||\cdot||_{W^{1,p}_\mu (M) })$, where
 $$\cc^{1,p}_\mu (M)=\{u\in \cc^1(M): u \in L^p_\mu(M),\:|\nabla u| \in L^p_\mu(M) \} $$ and $$||u ||_{W_\mu ^{1,p}(M)}=||u||_{L^p _\mu (M)}+||\nabla u||_{L^p _\mu (M)}.$$ 

\item $\mu_M$, canonical measure of  a submanifold of $\R^n$ (section \ref{sect_sub_measure}).  To avoid repetitions, we nevertheless abbreviate $\mu_M(M)$ into $|M|$. When integrating on $M$, the measure is $\mu_M$ if not otherwise specified. We thus often do not match it, simply writing $dx$ if $x$ is the variable of integration.

\item Given a $\mu_M$-measurable function $\fbf:M\to [0,\infty]$,  $\fbf\cdot \mu_M$ stands for the measure $B\mapsto \int_B \fbf(x) d\mu_M(x)$.
  We  will shorten $W^{k,p}_{\fbf\cdot \mu_M}(M),\cc^{k,p}_{\fbf\cdot \mu_M}(M), $ and $L^p_{\fbf\cdot \mu_M}  (M) $ into   $W^{k,p}_{\fbf}(M), \cc^{k,p}_{\fbf}(M),$ and $L^p_\fbf (M)$, and we omit the subscript $\fbf$ when $\fbf\equiv 1$ (these are then the usual Sobolev and Lebesgue spaces).
\item $\ica(M)$ (resp. $\icb(M)$) space of functions obtained by integration of nonnegative (resp. bounded nonnegative)  definable functions on definable (resp. bounded definable) manifolds (section \ref{sect_sub_measure}).
\item   $\supp_\omd u$, support of a function $u:\omd \to R$ (or a measure), i.e.    complement in  $\omd$  of the biggest open set on which the function (or measure) vanishes.

\item $\Phi_*\mu$ push-forward of the measure $\mu$ under a measurable mapping $\Phi:(A,\mu)\to X$.

\item $\check{\cc}^{0,1}(X)$, space of functions  on $X\subset \R^n$ that are Lipschitz with respect to $d_X$  (the inner distance, see (\ref{eq_inner_dist})), endowed with the norm defined in (\ref{eq_co1_norme}).

\item $\s_n$, set of all globally subanalytic subsets of $\R^n$ (section \ref{sect_sub}).  
		\item $\hn^k$, $k$-dimensional Hausdorff measure.
\item $\Gamma_\xi$,  graph of $\xi$.
\item $[\xi,\zeta]$, band between two graphs of functions (section \ref{sect_tilda}, equation (\ref{eq_intervals})).
	 	\item  Given two nonnegative functions $\xi$ and $\zeta$ on a set $E$ as well as a subset $Z$ of $E$, we write ``$\xi\lesssim \zeta$ on $Z$'' or  ``$\xi(x)\lesssim \zeta(x)$ for $x\in Z$'' when there is a constant $C$ such that $\xi(x) \le C\zeta(x)$ for all $x\in Z$. \item We write $\xi\sim \zeta$ when we have both $\xi\lesssim \zeta$ and $\zeta\lesssim \xi$. 


	 \end{itemize}

Given a couple of manifolds $S$ and $T$ with $\dim S\ge \dim T$, a smooth mapping  $\Phi: S \to T$, and a $\mu_S$-measurable function $g:S\to [0,\infty]$, we recall the {\bf coarea formula}:
for every Borel set $B$ of $T $	\begin{equation}\label{eq_coarea_formula}
\int_{ \Phi^{-1}(B)} g \;\jac \Phi \,d \mu_S  =\int_B \int_{\Phi^{-1}(y)} g(x)\, d \hn^l(x)\, d\mu_T(y),
\end{equation}
where $l=\dim S-\dim T$ and $\jac \Phi$ is the generalized Jacobian, the square root of the determinant of $D_x\Phi\; ^t D_x\Phi$ with respect to an orthonormal basis ($^t D_x\Phi$ being the transpose). When $\Phi$ is a submersion (which will be the case for us) then $\Phi^{-1}(y)$ is a manifold of dimension $l$, and $\hn^l_{|\Phi^{-1}(y)}=\mu_{\Phi^{-1}(y)}$. We refer to \cite[Theorem $3.5.9$]{krantz} for a  proof.
\begin{section}{Globally subanalytic sets and measures}\label{sect_sub}
	We provide basic definitions. We refer  to  \cite{ livre} for more on globally subanalytic geometry.	

	\begin{dfn}\label{dfn_semianalytic}
		A subset $E\subset \R^n$ is called {\bf semi-analytic} if it is {\it locally}
		defined by finitely many real analytic equalities and inequalities. Namely, for each $a \in   \R^n$, there is
		a neighborhood $U$ of $a$ in $\R^n$, and real analytic  functions $f_{ij}, g_{ij}$ on $U$, where $i = 1, \dots, r, j = 1, \dots , s_i$, such that
		\begin{equation}\label{eq_definition_semi}
			E \cap   U = \bigcup _{i=1}^r\bigcap _{j=1} ^{s_i} \{x \in U : g_{ij}(x) > 0 \mbox{ and } f_{ij}(x) = 0\}.
		\end{equation}

The flaw of the  semi-analytic category is that  it is not preserved by analytic morphisms, even when they are proper. To overcome this problem, we prefer working with the  subanalytic sets, which are defined as the projections of the semi-analytic sets.

A subset $E\subset \R^n$  is  {\bf  subanalytic} if 
each point $x\in\R^n$ has a neighborhood $U$ such that $U\cap E$ is the image under the canonical projection $\pi:\R^n\times\R^k\to\R^n$ of some relatively compact semi-analytic subset of $\R^n\times\R^k$ (where $k$ depends on $x$).

A subset $Z$ of $\R^n$ is  {\bf globally subanalytic} if $\hh_n(Z)$ is a subanalytic subset of $\R^n$, where $\hh_n : \R^n  \to (-1,1) ^n$ is the homeomorphism defined by \begin{equation}\label{eq_nu}
\hh_n(x_1, \dots, x_n) :=  (\frac{x_1}{\sqrt{1+|x|^2}},\dots, \frac{x_n}{\sqrt{1+|x|^2}} ).
\end{equation}

We say that {\bf a mapping (or a function) $f:A \to B$ is globally  subanalytic}, $A \subset \R^n$ and $B\subset \R^m$   globally subanalytic, if its graph is a globally  subanalytic   subset of $\R^{n+m}$. We will allow the density of measures to be infinite, considering that $f:A\to [0,\infty]$ is globally subanalytic if $f^{-1}(\infty)$ is a globally subanalytic set and the restriction of $f$ to $f^{-1}([0,\infty))$ is a  globally subanalytic function.

Of course, a bounded subanalytic set is globally  subanalytic. For simplicity, globally subanalytic sets and mappings will often be referred as {\bf definable} sets and mappings. We denote  by $\s_n$ the set of all the  globally subanalytic  subsets of $\R^n$.

\end{dfn}

\noindent{\bf \L ojasiewicz's inequality.}  It originates in the work of S. \L ojasiewicz \cite{loj}.   We shall make use of the following version (see \cite{ania, livre} for a proof):


\begin{pro}\label{pro_lojasiewicz_inequality}
	Let $f$ and $g$ be two globally subanalytic functions on a  globally subanalytic set $A$ with $\sup\limits_{x\in A} |f(x)|<\infty$. Assume that
	$\lim\limits_{t \to 0} f(\gamma(t))=0$ for every
	globally subanalytic arc $\gamma:(0,\ep) \to A$ satisfying $\lim\limits_{t \to 0} g(\gamma(t))=0$.
	Then there exist $\nu \in \N$ and $C \in \R$ such that for any $x \in A$:
	$$|f(x)|^\nu \leq C|g(x)|.$$
\end{pro}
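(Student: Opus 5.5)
We prove the \L ojasiewicz inequality of Proposition \ref{pro_lojasiewicz_inequality} by reducing it, through the curve selection lemma and the fact that definable functions of one variable admit Puiseux expansions, to a statement about the definable function $\varphi(t):=\sup\{|f(x)| : x\in A,\ |g(x)|\le t\}$. First note that $\varphi:(0,\infty)\to [0,\infty)$ is nondecreasing. It is finite because $f$ is bounded. It is globally subanalytic, since its graph is obtained from the definable sets $\Gamma_f$, $\Gamma_g$ by first-order operations, and the globally subanalytic category is closed under these (Gabrielov/Tarski--Seidenberg type stability). It therefore suffices to show that $\varphi(t)\to 0$ as $t\to 0^+$. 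Granting this, we argue as follows.

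\begin{itemize}
\item A one-variable globally subanalytic function admits near $0^+$ a Puiseux expansion $\varphi(t)=c\,t^{r}+o(t^r)$ with $r\in \Q$ (possibly $\varphi\equiv 0$ near $0$).
\item If $\varphi(t)\to 0$, then $r>0$ (or $\varphi$ vanishes identically near $0$). Choosing $\nu\in\N$ with $\nu r\ge 1$ gives $\varphi(t)^\nu\le C t$ for $t\in(0,t_0)$.
\item For $x\in A$ with $|g(x)|<t_0$ we then get $|f(x)|^\nu\le \varphi(|g(x)|)^\nu\le C|g(x)|$.
\item The points with $g(x)=0$ need care. The hypothesis applied to constant arcs forces $f(x)=0$ there, since a constant arc with $g=0$ must have $f\to 0$.
\item For $|g(x)|\ge t_0$ we use boundedness: $|f(x)|^\nu\le (\sup|f|)^\nu\le \frac{(\sup |f|)^\nu}{t_0}|g(x)|$. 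Enlarging $C$ finishes the proof.
\end{itemize}

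The main step, and the expected obstacle, is to show that $\varphi(t)\to 0$, and this is where the arc hypothesis enters. We argue by contradiction. Suppose $\varphi(t)\not\to 0$; since $\varphi$ is monotone, $\varphi(t)\ge 2\epsilon>0$ for all $t>0$. Consider the definable set
$$E:=\{(t,x)\in (0,\infty)\times A : |g(x)|\le t,\ |f(x)|\ge \epsilon\}.$$
Its projection onto the $t$-axis contains every $t>0$. Definable choice (existence of definable sections, valid in the o-minimal structure of globally subanalytic sets) then yields a definable arc $\gamma:(0,\ep)\to A$ with $|g(\gamma(t))|\le t$ and $|f(\gamma(t))|\ge\epsilon$. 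Hence $g(\gamma(t))\to 0$ while $f(\gamma(t))\not\to 0$, which contradicts the hypothesis.

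Two details remain, and both are routine. First, the hypothesis quantifies over arcs $\gamma:(0,\ep)\to A$, whereas the arc produced by definable choice need not be continuous. After shrinking $\ep$, the monotonicity theorem makes $\gamma$ continuous (indeed analytic) on $(0,\ep)$, so the hypothesis applies. Second, one must check that taking suprema preserves global subanalyticity. This holds because $\{(t,s): \exists x,\ |g(x)|\le t,\ |f(x)|>s\}$ is definable, and $\varphi(t)$ is the supremum of its fibre, which is again definable by o-minimality (every definable subset of $\R$ is a finite union of points and intervals).
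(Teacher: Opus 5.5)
The paper gives no proof of this proposition and only cites \cite{ania, livre}. Your argument is correct and is the standard proof of the \L ojasiewicz inequality in the o-minimal structure $\R_{an}$. You reduce to the one-variable definable function $\varphi(t)=\sup\{|f(x)|: x\in A,\ |g(x)|\le t\}$, use definable choice plus the monotonicity theorem (curve selection) to turn the arc hypothesis into $\varphi(t)\to 0$, and read off $\nu$ from the leading Puiseux term of $\varphi$.

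The one detail to add is the convention $\varphi(t):=0$ when $\{x\in A: |g(x)|\le t\}=\emptyset$. In that case $|g|$ is bounded below by a positive constant on $A$, and the inequality follows at once from $\sup_A|f|<\infty$.
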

\end{section}

\noindent{\bf Definable families.} It is natural to regard a set $A \in \s_{m+n}$ as a family of subsets of $\R^n$ parametrized by $\R^m$. Let us make it more precise.
Given $A \in \s_{m+n}$, the {\bf fiber}\index{fiber} of $A$ at $\tim$ is the set:  $$A_t:=\{x \in \R^n : (t,x)\in A\}.$$
	We thus get a family  $(A_t)_{t \in \R^m}$ of globally subanalytic subsets of $\R^n$.
	Any  family constructed in this way is called a {\bf globally subanalytic family of sets}\index{globally subanalytic! family of sets}.

Similarly, a {\bf globally subanalytic family of mappings}  is a family  $f_t:A_t \to B_t$, $\tim$, with $A \in \s_{m+n}$ and $B\in \s_{m+k}$, such that the mapping $f: A  \to B $, $(t,x)\mapsto (t,f_t(x))$ is globally subanalytic.

A definable family $(f_t)_{t \in \R^m}$ is  {\bf uniformly Lipschitz}\index{uniformly Lipschitz} if $f_t$ is $L$-Lipschitz for all $t\in \R^m$ with  $L $  independent of  $t$.  The {\bf uniformly bi-Lipschitz}\index{uniformly bi-Lipschitz}   families  are then defined analogously. A family of sets or functions is said to be {\bf uniformly bounded} if these sets or functions are bounded    independently of the parameters.
\subsection{Subanalytic measures}\label{sect_sub_measure}
Given an $m$-dimensional definable  manifold $M$, we denote by $\mu_M$ the measure provided by unit differential $m$-forms, the tangent spaces of  $M$ being endowed with the euclidean metric. This amounts to define $\mu_M$ as the $m$-dimensional Hausdorff measure (restricted to $M$). We will   (abusively)   consider $\mu_M$ as defined on the Borel $\sigma$-algebra of a definable set  containing $ M$. To avoid repetitions, we  abbreviate $\mu_M(M)$ into $|M|$.

Given a  $\mu_M$-measurable function $f:M\to [0,\infty]$, we   denote by $f\cdot \mu_M$ the measure $B\mapsto \int_B f(x) d\mu_M(x)$.
\begin{dfn}\label{dfn_sub_meas}
A measure $\mu$ on (the Borel $\sigma$-algebra of) a set $A\in \s_n$ is said to be globally subanalytic if it is a finite sum of  type $\sum_{i=1}^k \xi_i \cdot \mu_{E_i}$ where, for every $i$, $E_i\subset A$ is a globally subanalytic manifold and   $\xi_i:E_i\to [0,\infty]$ is a globally subanalytic function. We then say that $(A,\mu)$ is a {\bf globally subanalytic measured space}.
\end{dfn}
We impose the $E_i$ to be manifolds for convenience. Since definable sets can always be decomposed into finitely many definable manifolds, this is no loss of generality, and the measure $B\mapsto \sum_{i=1}^k\int_{D_i\cap B} \xi_i \,d \hn^{l_i}$, $l_i\le n$, with $D_i$ arbitrary definable subset of $A$ and $\xi_i:D_i\to [0,\infty]$ definable  function  for each $i$, is also globally subanalytic.


Given a Borel mapping $\Phi:(A,\mu)\to X$, we denote by $\Phi_*\mu$ the push-forward  of $\mu$ under $\Phi$, i.e. $\Phi_*\mu(B):=\mu (\Phi^{-1}(B))$, if $B\subset X$ is a Borel set.

\begin{pro}\label{pro_push}
 Let $(A,\mu)$ be a globally subanalytic measured space and let $\Phi:A\to X$ be globally subanalytic. There is a finite collection $\F$  of globally subanalytic manifolds included in $X$ such that
 $$ \Phi_* \mu =\sum_{M\in \F} \fbf_M \cdot \mu_M, \quad \mbox{with}\quad \fbf_M(y):=\sum_{S\in \tilde{\F}_M} \int_{\Phi_S^{-1}(y)}\zeta_S(x) \,d\mu_{\Phi_S^{-1}(y)}(x), $$
 where $\tilde{\F}_M$ is for each $M\in \F$ a finite collection of globally subanalytic manifolds comprised in $\Phi^{-1}(M)$ on each of which $\Phi$ induces a $\cc^\infty$ submersion $\Phi_S :S \to M$ and the $\zeta_S:S \to [0,\infty]$ are globally subanalytic functions.
\end{pro}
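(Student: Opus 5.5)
Since $\mu=\sum_{i=1}^k \xi_i\cdot\mu_{E_i}$ and push-forward is additive, $\Phi_*\mu=\sum_i \Phi_*(\xi_i\cdot \mu_{E_i})$, so it suffices to treat $\mu=\xi\cdot\mu_E$ with $E$ a definable manifold and $\xi:E\to[0,\infty]$ definable, and then merge the resulting families. For the merging, I would take a common definable stratification of $X$ compatible with all the manifolds produced in each summand and restrict everything to its strata, so that a single collection $\F$ serves all summands and the collections $\tilde\F_M$ are concatenated.

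\textbf{Stratifying $\Phi$.} For a single summand, I would apply the existence of definable stratifications of mappings (a definable version of the rank theorem, see \cite{livre}). There are finite partitions of $E$ into definable $\cc^\infty$ manifolds $S$ and of $\Phi(E)\subset X$ into definable $\cc^\infty$ manifolds $M$ such that each $S$ is mapped into a single $M$ and $\Phi_{|S}:S\to M$ is a $\cc^\infty$ map of constant rank. Refining further, I would require $\xi_{|S}$ to be either identically $\infty$ or finite and smooth, and $\Phi_{|S}$ to have constant rank equal to $\dim M$, i.e.\ to be a submersion onto $M$.

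\textbf{Discarding negligible strata.} Strata with $\dim S<\dim E$ are $\mu_E$-null and can be dropped; we may also drop those where $\xi\equiv 0$. Among the open strata $S$ of $E$, those on which the rank of $\Phi_{|S}$ is smaller than $\dim S$ are still a problem. If $\Phi_{|S}:S\to M$ is a submersion, then it is a submersion onto an $m$-dimensional $M$ with $m=\operatorname{rank}\le \dim S$. In that situation the coarea formula (\ref{eq_coarea_formula}) applies directly. So I need all open strata to be submersions onto their target strata. This is guaranteed by constant rank together with $\Phi(S)\subset M$, $\dim M=\operatorname{rank}$: the image of a constant-rank map is locally a manifold of that dimension, and the stratification of the target can be chosen so that $\Phi(S)$ is a union of strata of the right dimension. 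The measure zero of $\Phi(S)$ in higher-dimensional strata then causes no issue, since we push onto $\mu_M$ with $M$ of the rank dimension.

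\textbf{Applying the coarea formula.} For each such $S$ and each Borel set $B\subset M$, I would apply (\ref{eq_coarea_formula}) with $g=\xi/\jac\Phi_S$, where $\jac\Phi_S>0$ since $\Phi_S$ is a submersion:
\[
\int_{\Phi_S^{-1}(B)}\xi\,d\mu_S=\int_B\int_{\Phi_S^{-1}(y)}\frac{\xi(x)}{\jac\Phi_S(x)}\,d\mu_{\Phi_S^{-1}(y)}(x)\,d\mu_M(y).
\]
This suggests setting $\zeta_S:=\xi/\jac\Phi_S$. Since $\jac\Phi_S$ is a square root of a polynomial in the partial derivatives of a definable map, it is definable, and hence so is $\zeta_S$, with value $\infty$ where $\xi=\infty$. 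Summing over the strata $S$ lying over a given $M$ gives $\Phi_*\mu(B)=\sum_M\int_{B\cap M}\fbf_M\,d\mu_M$ with $\fbf_M$ as stated.

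\textbf{Main obstacle.} I expect the delicate step to be the bookkeeping in the stratification: arranging simultaneously that each $\Phi_{|S}$ is a submersion onto a stratum $M$ of $X$, and that the target strata coming from different summands can be made common. To handle this, I would first stratify the target compatibly with all images $\Phi(S)$, using the constant-rank images from a preliminary stratification. I would then pull back and refine the source so that each piece maps submersively onto a single target stratum. Finiteness of all collections follows from o-minimality.
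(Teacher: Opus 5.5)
Your proposal is correct and follows essentially the paper's proof. Like the paper, you reduce to a single summand $\xi\cdot\mu_E$, take a definable stratification of $\Phi$ so that each top-dimensional stratum $S\subset E$ maps submersively onto a target stratum, discard the null lower-dimensional strata, and apply the coarea formula with $\zeta_S=\xi/\jac\Phi_S$. The only difference is that your common target stratification for merging the summands is unnecessary: $\F$ need not consist of disjoint manifolds, so you can simply take the union of the collections obtained for each summand.
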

\begin{proof}
 If $\mu=\sum_{i=1}^k \xi_i \cdot \mu_{E_i}$,  it clearly suffices to prove the desired fact for each $\xi_i \cdot \mu_{E_i}$, which means that we can suppose $\mu$ to be of type $\xi \cdot \mu_E$, $E$ definable manifold and $\xi:E\to [0,\infty]$ definable function. Take a stratification $(\mathcal{S},\mathcal{T})$ of $\Phi$ (see
 \cite[Definition $2.6.9$]{livre} for the definition of a stratification of a mapping) such that  $E$ is a union of strata of $\mathcal{S}$.  Since  $\Phi_*\mu=\sum_{T\in \mathcal T} (\Phi_*\mu)_{|T} $, we can assume that $\mathcal T$ is reduced to one single stratum $T$. Since $\mu_{E}=\sum \mu_S$
 where the sum ranges over all the $S\in \mathcal S$ satisfying $S\subset E$ and $\dim S=\dim E$, it is clearly enough to prove the needed fact for the mapping $\Phi_S :S\to T$ induced by $\Phi$ on $S$,  for every such $S$.

 We are reduced to the case of a measure $\xi \cdot \mu_S$ with  $\Phi_S:S\to T$ smooth submersion, in which case the claimed fact can be easily derived from the coarea formula (\ref{eq_coarea_formula}), as follows. Note that since $\Phi_S$ is a submersion, the generalized jacobian of $\Phi_S$ does not vanish, and we can  set
 \begin{equation}\label{eq_dfn_zeta}\zeta_S(y):=\int_{\Phi_S^{-1}(y)}\frac{\xi(x)}{\jac \Phi_S(x)}\, d\mu_{\Phi_S^{-1}(y)}(x), \qquad y\in T . \end{equation}
 Then, by definition of the push-forward, we have for every Borel set $B\subset T$
 \begin{eqnarray*}
\Phi_{S*}(\xi\cdot \mu_S)(B)
&=&\int_{\Phi_S^{-1}(B)} \xi(x)\,d\mu_S(x)\\
&\overset{(\ref{eq_coarea_formula})}=&\int_{B}  \int_{\Phi^{-1}_S(y)} \frac{\xi(x)}{\jac \Phi_S(x)}\,d\mu_{\Phi_S^{-1}(y)}(x)\,d\mu_T(y)\\
&\overset{(\ref{eq_dfn_zeta})}=&\int_{B}  \zeta_S(y) \, d\mu_T(y)\\
&=& \zeta_S \cdot \mu_T(B),
 \end{eqnarray*}
which is of the desired type.
\end{proof}

\noindent{\bf The sets $\ica(M)$ and $\icb(M)$.} We can summarize the above proposition by saying that the image of a definable measure under a definable morphism is a finite sum of measures of definable manifolds with densities that are integrals of definable functions on the fibers. It is well-known that integrals of definable functions may fail to be definable, which complicates our task. It can be seen that these  are log-analytic functions \cite{clr,livre}, i.e. finite polynomials in some globally subanalytic functions and their logarithms, but we shall not need it as it will be more convenient for us not to exit the subanalytic structure.

This leads us to introduce the following sets of nonnegative functions. Given a definable manifold $M$, we will denote by
$\ica(M)$  the set of functions (not necessarily definable) $\fbf:M\to [0,\infty]$ of type $\sum_{i=1}^k \int_{Z_{i,y}} \zeta_{i,y}(x) d\mu_{Z_{i,y}}(x)$ with $(Z_{i,y})_{y\in M}$  definable family of manifolds and  $\zeta_{i,y}:Z_{i,y}\to [0,\infty]$   definable family of functions for each $i$.

 We then denote by $\icb(M)$ the subset  of $\ica(M)$ constituted by the elements $\fbf$ as above for which the families of sets $(Z_{i,y})_{y\in M}$ and the families of functions $(\zeta_{i,y})_{y\in M}$ can be chosen uniformly bounded.

\begin{pro}\label{pro_f0}
If $\fbf\in \ica(M)$ then $\fbf^{-1}(0)$ and $\fbf^{-1}(\infty)$ are definable. Moreover, if $M$ is bounded and $\dim M\ge 1$ then
there is an $\fbf\cdot \mu_M$-negligible definable set $B\subset M$ which is closed in $M$ and such that we have on $M$ for some positive constants $\alpha$ and $c$
\begin{equation}\label{eq_order_fbf_au_bord}
\fbf(y)\ge c \,d(y,B\cup \delta M)^\alpha.
\end{equation}

\end{pro}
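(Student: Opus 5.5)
We first treat the definability of $\fbf^{-1}(0)$ and $\fbf^{-1}(\infty)$. Write $\fbf=\sum_{i=1}^k \fbf_i$ with $\fbf_i(y)=\int_{Z_{i,y}}\zeta_{i,y}\,d\mu_{Z_{i,y}}$. Since the summands are nonnegative, $\fbf^{-1}(0)=\bigcap_i \fbf_i^{-1}(0)$ and $\fbf^{-1}(\infty)=\bigcup_i\fbf_i^{-1}(\infty)$. It therefore suffices to take $k=1$.

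The integral vanishes iff $\zeta_y=0$ $\mu_{Z_y}$-a.e. Since $\zeta_y$ is definable, this happens iff the definable set $\{x\in Z_y:\zeta_y(x)>0\}$ has dimension $<\dim Z_y$. Dimension is definable in families, so the set of such $y$ is definable.

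For $\fbf^{-1}(\infty)$, we would use the definable trivialisation/cell decomposition of the family together with a standard finiteness criterion. Decompose the family $(Z_y)$ so that on each piece, after a uniformly bi-Lipschitz change of coordinates, $Z_y$ becomes a cell and $\zeta_y$ is a definable function of a fixed "monomial-like" form (preparation theorem for subanalytic functions, see \cite{livre}). Finiteness of the integral is then decided by the exponents and by the boundedness of the cells, which are definable conditions in $y$. Alternatively, we could argue by induction on $\dim Z_y$ via Fubini: the integral over a one-dimensional fiber of a definable function is finite iff a definable condition on the asymptotic exponents holds, and that condition is definable in the parameters. Either route is standard but technical, and we would cite the corresponding statement from \cite{clr} or \cite{livre} (integrals of definable functions are log-analytic, hence definable sets of infinity and of zero).

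For the lower bound we set $B:=\fbf^{-1}(0)$, after possibly enlarging it as described below. This set is definable by the first part. It is $\fbf\cdot\mu_M$-negligible, since the density vanishes there. Closedness in $M$ is the issue: $B$ need not be closed. We therefore replace it by its closure in $M$ if $\dim(\adh B\cap M)<\dim M$. Where $\fbf$ vanishes on an open subset of $M$, the measure is zero there anyway, so the whole closure of such a region is negligible once we add the lower-dimensional boundary. We let $B$ be the closure in $M$ of $\fbf^{-1}(0)$, which is negligible because $\fbf=0$ on its interior part and its frontier is lower dimensional.

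Next we construct a definable minorant. By Fatou-type monotonicity it is enough to find a definable function $g\le \fbf$ on $M\setminus B$ with $g>0$ there. A natural choice is
$$g(y)=\min\bigl(1,\ \sup\{\,r\,|\{x\in Z_y:\zeta_y(x)\ge r\}\cap \bou(x_0,r)|' \}\bigr),$$
more concretely $g(y):=\sup\{r\in(0,1]: \exists x_0,\ \zeta_y\ge r \text{ on } Z_y\cap\bou(x_0,r)\text{ and this set contains a graph-ball of size } r\}$. This $g$ is definable, and up to a constant depending only on $\dim Z_y$ it satisfies $g(y)^{N}\lesssim \fbf(y)$. Then $g$ is positive off $B$.

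Finally we apply Proposition \ref{pro_lojasiewicz_inequality} with $f(y)=d(y,B\cup\delta M)$ (bounded since $M$ is bounded) and $g$. We need to check that along every definable arc $\gamma$ with $g(\gamma(t))\to0$, we have $\gamma(t)\to B\cup\delta M$. Such an arc has a limit in $\adh M$, by boundedness. If that limit lay in $M\setminus B$, continuity properties (lower semicontinuity of $g$ off a lower-dimensional definable set, which we add to $B$) would contradict $g\to0$. This yields $d(y,B\cup\delta M)^\nu\lesssim g(y)\lesssim \fbf(y)^{1/N}$, which is \eqref{eq_order_fbf_au_bord}.

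The main obstacle is the design of $g$: it must be definable, comparable from below with $\fbf$, and positive off a closed negligible set. Handling the possible discontinuity of $\fbf$ inside $M$ is exactly what forces us to include in $B$ a lower-dimensional bad set.
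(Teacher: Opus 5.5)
Your lower-bound argument takes a genuinely different route from the paper's, and it can be made to work.

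\textbf{How the paper argues.} It writes $\fbf(y)=\sum_i|\check Z_{i,y}|$, where $\check Z_{i,y}=\{(x,z): x\in Z_{i,y},\ 0<z<\zeta_{i,y}(x)\}$ is the open subgraph, and applies definable bi-Lipschitz triviality to the families $(\check Z_{i,y})_y$ along a finite partition $\Pa$ of $M$. Since bi-Lipschitz maps preserve null sets and sets of infinite measure, $\fbf^{-1}(0)$ and $\fbf^{-1}(\infty)$ are unions of elements of $\Pa$. On each $S\in\Pa$ one also gets $|\check Z_{i,y}|\ge |\check Z_{i,y_S}|\,(1+C_{i,y})^{-l_{i,S}}$. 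Then $B$ is the union of the nowhere dense pieces and the pieces where $\fbf\equiv 0$. Finally \L ojasiewicz's inequality is applied to the continuous definable bi-Lipschitz constants $C_{i,y}$, which can only blow up at $\delta S\subset B\cup\delta M$.

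\textbf{How you argue, and what needs fixing.} You avoid the triviality theorem entirely: you use a definable minorant $g$ and apply \L ojasiewicz's inequality to $g$. Your dimension count for $\fbf^{-1}(0)$ is likewise a fine elementary substitute. To make this rigorous, three things are needed.
\begin{enumerate}
\item Define $g$ precisely; your first displayed formula is meaningless. For example, let $g(y)$ be the supremum of the $r\in(0,1]$ for which there exist $x_0$ and a $d$-dimensional vector subspace $P$, $d=\dim Z_y$, such that the orthogonal projection onto $P$ of $\{x\in Z_y\cap\bou(x_0,r):\zeta_y(x)\ge r\}$ contains a $d$-ball of radius $r/2$. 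This is first-order, hence definable. Since orthogonal projections do not increase $\hn^d$, it gives $\fbf\gtrsim g^{n+1}$. Also $g>0$ wherever $\fbf>0$: take $P$ tangent to $Z_y$ at a point of an open piece where $\zeta_y$ is bounded below.
\item The discontinuity set of $g$ on $M$ has dimension $<\dim M$. So its closure in $M$ may be added to $B$ while keeping $B$ closed in $M$ and negligible. This is the counterpart of the paper putting all nowhere dense pieces of $\Pa$ into $B$.
\item You need $B\cup\delta M\ne\emptyset$. If $M$ is compact (a circle, say) and $\fbf>0$ everywhere, your $B$ is empty and $d(y,B\cup\delta M)$ makes no sense. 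The paper adds a point to $B$, which is harmless since $\dim M\ge 1$.
\end{enumerate}

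\textbf{The genuine gap: $\fbf^{-1}(\infty)$.} You do not actually prove that this set is definable. The justification ``integrals of definable functions are log-analytic, hence definable sets of infinity and of zero'' does not work, for two reasons:
\begin{enumerate}
\item Log-analyticity describes the integral only where it is finite.
\item Level sets of log-analytic functions need not be definable at all; the graph of $\log$ is not globally subanalytic.
\end{enumerate}
What is needed is a statement that the locus where a parametrized integral of a nonnegative definable function is finite is definable. Your preparation and Fubini sketches point in that direction but are not carried out. The cheapest fix is the paper's device: pass to the subgraphs $\check Z_{i,y}$ and invoke definable bi-Lipschitz triviality. That settles $\fbf^{-1}(\infty)$ (and $\fbf^{-1}(0)$) in two lines. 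Your lower-bound argument uses only the definability of $\fbf^{-1}(0)$, so it is unaffected by this gap.
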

\begin{proof}
Let $\fbf\in \ica(M)$, say $\fbf(y)=\sum_{i=1}^k \int_{Z_{i,y}} \zeta_{i,y} d\mu_{Z_{i,y}}$. Set then
$$\check{Z}_{i,y}:=\{(x,z)\in Z_{i,y}\times \R: 0<z< \zeta_{i,y}(x)\}, $$
and note that \begin{equation}\label{eq_f_zc}
\fbf(y)=\sum_{i=1}^k |\check{Z}_{i,y}| .\end{equation}

By  \cite[Corollary $3.2.12$]{livre},  there is a finite partition $\Pa$ of $M$ into definable sets such that   $(\check{Z}_{i,y})_{y\in M}$ is for each $i$ definably bi-Lipschitz trivial
 along each element of $\Pa$, which means that for each $S\in \Pa$ there are $y_S$ in $S$ and a definable family of bi-Lipschitz homeomorphisms $h_{i,y} : \check{Z}_{i,y} \to \check{Z}_{i,y_S}$, $y\in S$. Since bi-Lipschitz mappings preserve both negligible and infinite measure sets,   $\fbf^{-1}(0)$ and $\fbf^{-1}(\infty)$ are unions of elements of  $\Pa$, and hence  are definable.

 Suppose now $M$ to be bounded and at least one-dimensional. Let $C_{i,y}$ denote  the bi-Lipschitz constant of $h_{i,y}$, and observe that $y\mapsto C_{i,y}$ is for each $i$ a definable function. Refining the partition $\Pa$, we can assume   that the frontier in $M$ of an element of $\Pa$ is a union of elements of $\Pa$ and that $y\mapsto C_{i,y}$ is continuous on each element of $\Pa$ (for each $i$). Notice that for $y\in S\in \Pa$
 \begin{equation}\label{eq_mu_zc}|\check{Z}_{i,y}|\ge \frac{|\check{Z}_{i,y_S}|}{(1+C_{i,y})^{l_{i,S}}},\qquad \mbox{where }\;\;l_{i,S}:=\dim \check{Z}_{i,y_S} . \end{equation}

 Let $B$ denote  the union of all the nowhere dense elements of $\Pa$   together with all the elements of $\Pa$ on which $\fbf$ (identically) vanishes. Possibly extracting a point from an element of $\Pa$ (which is $\mu_M$-negligible, since $\dim M\ge 1$) we may assume $B$ to be nonempty. Fix $S\in \Pa$ disjoint from $B$. As $C_{i,y|S}$ is  continuous and $M$ is bounded, it can only tend  to infinity at points of  $ \delta S \subset B\cup \delta M$.
 By \L ojasiewicz's inequality (Proposition \ref{pro_lojasiewicz_inequality}, applied to $g(y):=\frac{1}{1+C_{i,y}}$ and $f(y):=d(y,B\cup \delta M)$), we deduce that  there are constants $\kappa$ and  $\lambda$ such that we have   on $S$ for each $i$ \begin{equation}\label{eq_cilambda}
 d(y,B\cup \delta M)^\kappa\le \frac{\lambda}{ 1+	C_{i,y} }.\end{equation}
We thus conclude for $y\in S$
 $$\fbf(y)\overset{(\ref{eq_f_zc})}=\sum_{i=1}^k |\check{Z}_{i,y}|\overset{(\ref{eq_mu_zc})}\ge \sum_{i=1}^k \frac{|\check{Z}_{i,y_S}|}{(1+C_{i,y})^{l_{i,S}}}\overset{(\ref{eq_cilambda})}\ge\sum_{i=1}^k \frac{|\check{Z}_{i,y_S}|}{\lambda^{l_{i,S}}} d(y,  B\cup \delta M)^{\kappa l_{i,S}}. $$
 Since $l_{i,S}$  takes  only finitely many values, this yields the desired fact.
\end{proof}


	\subsection{ Lipschitz conic structure}
Let us first recall the following theorem which is a Lipschitz version of the famous $\cc^0$ conic structure of definable sets. We refer to \cite[Theorem $3.4.1$]{livre} for a proof.
	
	\begin{thm}[Lipschitz Conic Structure]\label{thm_local_conic_structure}
		Let  $X\subset \R^n$ be subanalytic and $x_0\in X $. 
		For each $\ep>0$ small enough, there exists a Lipschitz subanalytic homeomorphism
		$$H: x_0* (\sph(x_0,\ep)\cap X)\to  \Bb(x_0,\ep) \cap X,$$  
		satisfying $H_{| \sph(x_0,\ep)\cap X}=Id$, preserving the distance to $x_0$, and having the following metric properties:
		\begin{enumerate}[(i)] 
			\item\label{item_H_bi}     The natural retraction by deformation onto $x_0$ $$r:[0,1]\times  \Bb(x_0,\ep)\cap X \to \Bb(x_0,\ep)\cap X,$$ defined by \begin{equation}\label{eq_def_r_s}r(s,x):=H(sH^{-1}(x)+(1-s)x_0),\end{equation} is Lipschitz.   
			Indeed, there is a constant $C$ such that  for every fixed $s\in [0,1]$, the mapping $r_s$ defined by $x\mapsto r_s(x):=r(s,x)$, is $Cs$-Lipschitz.
			\item \label{item_r_bi}  For each $\delta>0$,
			the restriction of $H^{-1}$ to $\{x\in X:\delta \le |x-x_0|\le \ep\}$ is Lipschitz and, for each $s\in (0,1]$, the map  $r_s^{-1}:\Bb(x_0,s\ep) \cap X\to \Bb(x_0,\ep) \cap X$ is Lipschitz. 
		\end{enumerate}
	\end{thm}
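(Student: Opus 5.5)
Since this is the Lipschitz version of the classical conic structure theorem, I would construct $H$ by integrating a suitable stratified vector field, and not rely on the $\cc^0$ statement alone. Translating, assume $x_0=0$ and write $\rho(x)=|x|$. The aim is a definable vector field $v$ on $X\cap \Bb(0,\ep)\setminus\{0\}$ with three properties.

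\begin{enumerate}[(a)]
\item $v$ is tangent to the strata of a fixed definable stratification of $X$, so that its flow stays in $X$.
\item $\langle v(x),x\rangle=|x|$, i.e. $v\cdot\nabla\rho\equiv 1$, so that the flow moves points inwards at unit radial speed and preserves the spheres $\sph(0,t)$ as level sets of the time.
\item $v$ satisfies the scale-invariant Lipschitz estimate
$$|v(x)-v(x')|\lesssim \frac{|x-x'|}{\min(|x|,|x'|)},\qquad |v|\lesssim 1.$$
\end{enumerate}

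Given such a $v$, I define $H(t\,\theta)$ for $\theta\in\sph(0,\ep)\cap X$ and $t\in(0,1]$ as the point reached from $\theta$ after flowing $-v$ for time $(1-t)\ep$. Then $H$ is the identity on the sphere, preserves the distance to $0$, and $r_s(x)=H(sH^{-1}(x))$ is the flow from radius $|x|$ down to radius $s|x|$.

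To produce $v$, I would first take a definable stratification of $X$ compatible with $\{0\}$ and satisfying a Lipschitz regularity condition: Mostowski's (L)-condition, or the L-regular decompositions of \cite{livre}. On each L-regular cell, the graph maps have derivatives bounded so that, in adapted coordinates, the cell looks like the graph of a Lipschitz function over a cone. I would then apply Łojasiewicz's inequality (Proposition \ref{pro_lojasiewicz_inequality}) to $\ep$ small to make sure that $\rho$ has no critical points on the strata near $0$. More quantitatively, the angle between $x/|x|$ and the tangent spaces of the strata is bounded away from $\pi/2$; this is where smallness of $\ep$ enters, via the curve selection lemma. Define $v$ on each stratum as the normalized projection of $x/|x|$ onto the tangent space, and then glue across strata using the (L)-condition, together with a definable partition of unity adapted to the distance to lower strata. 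This yields (a) and (b), and (c) holds because the regularity conditions are invariant under homotheties.

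The Lipschitz estimates then follow from Gronwall's inequality. Put $\phi_\tau$ for the flow, parametrized by the radius $\tau$, and consider two trajectories through $x$ and $x'$ at the same radius. By (c),
$$\frac{d}{d\tau}|\phi_\tau(x)-\phi_\tau(x')|\lesssim \frac{|\phi_\tau(x)-\phi_\tau(x')|}{\tau}.$$
Integrating from radius $|x|$ down to $s|x|$ gives a factor $s$ with the right exponent, which yields the $Cs$-Lipschitz bound on $r_s$ in (\ref{item_H_bi}). It also yields the Lipschitz bound on $r_s^{-1}$ and on $H^{-1}$ away from $0$ in (\ref{item_r_bi}). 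If the constant coming out of (c) is larger than $1$, I would reparametrize the time so that the exponent is exactly one. Points at different radii are handled by first moving radially along a trajectory, which is $1$-Lipschitz in the radius by (b). Lipschitzness of $H$ itself follows from the same estimate, using $|v|\lesssim 1$. Definability of $H$ is the remaining point, since flows of definable vector fields need not be definable. To avoid this, I would instead build $H$ cell by cell, by induction on $n$, through the L-regular cell structure, so that the homeomorphism is explicitly definable; the vector field picture then serves as the guide for the estimates.

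I expect the main obstacle to be the construction of $v$ with estimate (c) uniformly near $0$, that is, the gluing across strata of different dimensions. This is where the full machinery of L-regular decompositions and regular directions from \cite{livre} is needed, and I would first attempt it by induction on $n$, projecting along a regular direction for $X$ at $0$.
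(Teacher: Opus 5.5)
There is a genuine gap. It sits at the heart of the statement: the $Cs$-Lipschitz bound in (\ref{item_H_bi}) and the Lipschitzness of $H$ itself. The gap remains even if you succeed in constructing $v$, the step you identify as the main obstacle.

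\textbf{The Gronwall step gives the wrong direction.} Your conditions (a)--(c) are two-sided. Gronwall applied to $\bigl|\frac{d}{d\tau}D\bigr|\le KD/\tau$ gives, for $|x|=|x'|$, only
$s^{K}|x-x'|\le|r_s(x)-r_s(x')|\le s^{-K}|x-x'|$.
So the upper bound deteriorates as $s\to0$ instead of decaying like $s$.

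This cannot be fixed by adjusting constants. On $X=\R^2$ consider the semialgebraic field
\[
v(x)=\frac{x}{|x|}+\lambda\,\frac{2x_1x_2}{|x|^2}\,\frac{(-x_2,x_1)}{|x|}.
\]
It is smooth and homogeneous of degree $0$ on $\R^2\setminus\{0\}$, so it satisfies (a), (b) and (c). Its trajectories are the curves on which $\frac{x_2}{x_1}|x|^{-2\lambda}$ is constant. Hence
\[
r_s(x)=s|x|\,\frac{(x_1,\,s^{2\lambda}x_2)}{|(x_1,\,s^{2\lambda}x_2)|},
\]
and the differential of $r_s$ at points of the positive $x_1$-axis sends $e_2$ to $s^{1+2\lambda}e_2$. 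For $\lambda<0$ this is not $O(s)$, and for $\lambda<-\frac12$ the maps $r_s$ are not even uniformly Lipschitz.

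Reparametrizing time changes nothing, since $r_s$ depends only on the trajectories and on the radius.

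\textbf{The same gap affects $H$.} Since $H(t\theta)=r_t(\theta)$ for $\theta\in\sph(x_0,\ep)\cap X$, Lipschitzness of $H$ at the vertex is equivalent to $r_t$ being $Ct$-Lipschitz on the link. In the example above, $H$ is not Lipschitz for $\lambda<0$. So your claim that Lipschitzness of $H$ ``follows from the same estimate'' inherits the gap.

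What is actually needed is a one-sided property: the angular part of the motion must not expand as one moves toward $x_0$, and in cusps it must genuinely contract. Nothing in your construction (normalized projections glued by a partition of unity) controls the sign of this. Granting $v$, your Gronwall argument does give (\ref{item_r_bi}), but not (\ref{item_H_bi}).

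\textbf{Definability is not a side issue.} Take the same example with $\lambda\notin\Q$. Then $r_s$ involves $s^{2\lambda}$, so flows of semialgebraic fields satisfying (a)--(c) need not be subanalytic.

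Your fallback, building $H$ cell by cell by induction on $n$, is where the actual proof lives. But you only announce it, and none of your flow estimates transfer to it.

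The paper does not reprove this theorem; it cites a proof whose scheme it reproduces in Proposition \ref{pro_rtilda}. That scheme works entirely in the explicit setting:
\begin{itemize}
\item a uniformly bi-Lipschitz change of coordinates makes $e_n$ regular (Theorem \ref{thm_proj_reg_hom_pres_familles});
\item the set is covered by graphs of Lipschitz functions $\xi_1\le\dots\le\xi_N$ and the bands between them;
\item a lower-dimensional retraction is lifted by freezing the barycentric coordinate $\sigma$;
\item Lipschitz constants are read off from (\ref{eq_q_sim}), together with control of the gaps $\xi_{i+1}-\xi_i$ along the retraction as in (\ref{eq_xi-xi_i+1_dec}), supplied by the induction hypothesis.
\end{itemize}
In that approach definability is automatic. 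All Lipschitz bounds, including the one-sided decay that (\ref{item_H_bi}) requires, must be verified directly on the explicit formula for the lift, not extracted from an ODE estimate.
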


Any retraction $r$ as in the above theorem will be called {\bf a Lipschitz conic structure retraction of $X$ at $\xo$}.
 It was established in the proof of the above theorem (see \cite[Remark 3.3.2]{livre}) that given definable set-germs $X_1,\dots, X_l$ at $\xo\in \cap _{i=1}^lX_i$, we can find a mapping $r:[0,1]\times  \Bb(x_0,\ep)  \to \Bb(x_0,\ep)$ that induces on every $[0,1]\times X_i$ (by restriction) a  Lipschitz conic structure retraction of $X_i$ at $\xo$ (i.e. we can handle several sets simultaneously). We will then say that $r$ is a  {\bf Lipschitz conic structure retraction of $X_1,\dots, X_l$} at $\xo$.

  We may require  in addition  that, given germs of nonnegative bounded definable functions $\eta_1,\dots, \eta_k$ at $\xo$, we can choose $r_s$ such that $\eta_i(r_s(x))\le C\eta_i(x)$ for all $i$, for some constant $C$ independent of $x$ and $s$ (see \cite[Remark $3.4.4$]{livre}).
Unfortunately, as mentioned in the preceding section, the study of push-forward  measures will involve non definable density functions and we need to establish such a fact for an element $\fbf \in \icb(M)$, which is the purpose of the proposition below whose  rather technical proof   is postponed to section \ref{sect_tilda}.

\begin{pro}\label{pro_dutoc}
 Let $\fbf\in \icb(M)$, $M\subset \R^n$ definable manifold, and $x_0\in \mba$. For each $\ep>0$ small, there is a Lipschitz conic structure retraction $r:[0,1]\times\Bb(\xo,\ep)  \to \Bb(\xo,\ep)$ of $M\cup \{\xo\}$ at $\xo$ such that for $y\in \Bb(\xo,\ep)\cap M$ and $s\in (0,1]$
 \begin{equation}\label{eq_fbf_dutoc}
  \frac{s^\nu}{C} \,\fbf(y)\le \fbf(r_s(y)) \le C\, \fbf(y),
 \end{equation}
 for some positive numbers $C$ and $\nu$ independent of $s$ and $y$.
\end{pro}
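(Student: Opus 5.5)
We reduce to the volumes of definable families and then build the conic retraction so that it moves these families in a controlled, bi-Lipschitz way. As in the proof of Proposition \ref{pro_f0}, we write $\fbf(y)=\sum_{i=1}^k |\check{Z}_{i,y}|$. Here $\check{Z}_{i,y}=\{(x,z): x\in Z_{i,y},\ 0<z<\zeta_{i,y}(x)\}$. Since $\fbf\in \icb(M)$, these families are uniformly bounded definable families of sets in some $\R^N$, parametrized by $y\in M$. It therefore suffices to produce a single retraction $r$ that works simultaneously for the finitely many functions $y\mapsto |\check{Z}_{i,y}|$, with one $C$ and one $\nu$.

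The key idea is to build the retraction on the total space. Consider the definable sets
$$\check{Z}_i:=\{(y,w): y\in M,\ w\in \check{Z}_{i,y}\}\subset \R^n\times\R^N.$$
We want a Lipschitz conic structure retraction of $M\cup\{\xo\}$ at $\xo$ that lifts to these families. Concretely, for each $s$ and $y$ we want a bi-Lipschitz homeomorphism $\check{Z}_{i,y}\to \check{Z}_{i,r_s(y)}$, and we want its Lipschitz constants controlled by $s^{-\nu'}$ in one direction and by a constant in the other.

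First step: pass to a bi-Lipschitz triviality statement. As in Proposition \ref{pro_f0}, we take a partition of $M$ near $\xo$ along whose elements the families are definably bi-Lipschitz trivial. We then construct $r$ with a parametrized version of the proof of Theorem \ref{thm_local_conic_structure}. The retraction is built inductively along a regular vector field and respects a stratification; we ask in addition that it preserve the cells of a cell decomposition of $\R^n\times\R^N$ compatible with the $\check{Z}_i$. In the spirit of \cite[Remark 3.3.2 and Remark 3.4.4]{livre}, this lets $r_s$ act on fibers.

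The bounded situation is essential here. Since the families are uniformly bounded, the fibers $\check{Z}_{i,y}$ have uniformly bounded diameter. Along each trivializing piece, the bi-Lipschitz constant $C_{i,y}$ (definable in $y$) can blow up only at $\xo$ or on lower-dimensional pieces. By \L ojasiewicz's inequality (Proposition \ref{pro_lojasiewicz_inequality}) it is then bounded by a power of $|y-\xo|$, and along the trajectory $r_s(y)$ this power of $|r_s(y)-\xo|$ is comparable to a power of $s$. Since $r$ preserves the distance to $\xo$ via $H$, we have $|r_s(y)-\xo|=s|y-\xo|$.

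Second step: derive the two inequalities. Volume of a $d$-dimensional set changes under an $L$-Lipschitz map by a factor at most $L^d$. Lower bound: we need the fiber homeomorphism from $\check{Z}_{i,y}$ to $\check{Z}_{i,r_s(y)}$ to have inverse Lipschitz constant at most $C s^{-\nu'}$. This gives $|\check{Z}_{i,r_s(y)}|\ge s^{\nu}|\check{Z}_{i,y}|/C$. Upper bound: the map $\check{Z}_{i,y}\to\check{Z}_{i,r_s(y)}$ must be Lipschitz with a constant independent of $s$. This is the analogue of $\eta(r_s(x))\le C\eta(x)$ in \cite[Remark 3.4.4]{livre}, where points moving towards $\xo$ do not increase $\eta$. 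To get it, we apply that remark to suitable definable functions on the total space. These can be, for instance, the widths $\zeta_{i,y}$ and the distances defining the cell boundaries of the $\check{Z}_{i,y}$ in a decomposition whose cells are bands $[\xi,\zeta]$ between graphs; this is presumably the role of section \ref{sect_tilda}. The fiber volume is then computed by iterated integrals of band widths. Each width $\zeta-\xi$ is a definable function of $(y,\text{lower coordinates})$, and the retraction lifted to the lower coordinates makes it nonincreasing up to a constant and nondecreasing up to $s^{\nu}$.

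The main obstacle is this lifting: constructing $r$ on $\R^n$ that simultaneously admits compatible fiberwise maps on all the $\check{Z}_{i,y}$ with the one-sided uniform control. I would first try to apply the Lipschitz conic structure theorem with the refinement of \cite[Remark 3.4.4]{livre} to the sets $\check{Z}_i$ in $\R^{n+N}$ at points of $\{\xo\}\times\R^N$, arranging the retraction to commute with the projection to $\R^n$. If that fails, I would work cell by cell with band widths and use \L ojasiewicz on each width function.
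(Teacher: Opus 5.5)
Your reduction to the subgraphs $\check Z_{i,y}$ and your closing volume comparison are sound. Homeomorphisms $\check Z_{i,y}\to\check Z_{i,r_s(y)}$ that are $C$-Lipschitz with $Cs^{-\nu}$-Lipschitz inverses would give (\ref{eq_fbf_dutoc}), just as the paper's last step does using $\tilde r_{s,y}$ on $Z_{i,y}$, its Jacobian, and (\ref{eq_zeta_dutoc}). But producing $r$ together with such fiber maps is the whole content of the statement, and you leave it open (``I would first try\dots\ If that fails\dots'').

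The paper does this in Proposition \ref{pro_rtilda}, by induction on the fiber dimension:
\begin{enumerate}
\item Lemma \ref{prop proj reg}, resting on Theorem \ref{thm_proj_reg_hom_pres_familles} and Lemma \ref{lem_eq_dist_cor_th_prep}, changes fiber coordinates by a uniformly bi-Lipschitz family. After this change the sets lie in bands between \emph{uniformly Lipschitz} graphs $\xi_{i,y}$, and on each band $\zeta_j\sim|x_n-\theta|^\alpha a$.
\item The induction hypothesis, in one fewer fiber variable, is applied to the projected cells, to the widths $\xi_{i+1}-\xi_i$, and to the $\xt$-dependent factors coming from the preparation.
\item The lift keeps the band coordinate $\sigma$ fixed. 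It is shown to be bi-Lipschitz via $|x-x'|\sim|\xt-\xt'|+|\sigma-\sigma'|(\xi_{i+1,y}-\xi_{i,y})(\xt)$.
\item In the base case, the factor $s^\nu$ needs Lemma \ref{lem_eq_dist_cor_th_prep}, a Puiseux argument for $c(t)$, and $C(s)\lesssim s^{-\beta}$.
\end{enumerate}
Your last paragraph mentions bands and widths, but three things are missing there:
\begin{itemize}
\item The claim that the widths become ``nonincreasing up to a constant and nondecreasing up to $s^\nu$'' under the lifted retraction is exactly the inductive statement; you assert it rather than prove it.
\item \cite[Remark 3.4.4]{livre} controls only functions of the base variable, not functions that also depend on fiber coordinates.
\item Without uniformly Lipschitz band boundaries, the interpolated lift is not uniformly Lipschitz.
\end{itemize}

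The concrete mechanism you do propose in your first step would fail. The constants $C_{i,y}$ of a bi-Lipschitz trivialization as in Proposition \ref{pro_f0} may blow up along the whole frontier of the trivializing pieces. So \L ojasiewicz bounds them by a negative power of the distance to that frontier, not of $|y-\xo|$. Even granting $C_{i,y}\lesssim|y-\xo|^{-\kappa}$, the map $h_{i,r_s(y)}^{-1}\circ h_{i,y}$ only has Lipschitz constant $\lesssim s^{-\kappa}|y-\xo|^{-2\kappa}$, which is not uniform in $y$. The upper bound $\fbf(r_s(y))\le C\fbf(y)$ needs a constant independent of both $s$ and $y$, which an arbitrary trivialization cannot supply; the fiber maps have to be constructed together with $r$. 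Similarly, a conic structure of $\check Z_i$ ``at points of $\{\xo\}\times\R^N$'' commuting with the projection is not an available theorem. It is precisely what has to be built.
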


\section{Embedding theorems}
 The proof of Sobolev's embedding theorem for measures with density in $\icb(M)$ (Theorem \ref{thm_sobolev_embedding}) requires  some local estimates (Lemma \ref{lem_u_neta}) which are essentially generalizations of  results of \cite{lprime}.

\subsection{Some preliminary estimates.}\label{sect_lcs_mba}
We start with a few facts about Lipschitz conic structure retractions that were established in \cite{trace, lprime} and that will be useful to prove Lemma \ref{lem_u_neta}.

Let $M\subset\R^n$ be a  definable manifold with $0_{\R^n}\in\adh M$ and $\dim M\ge 1$, and
	set for $\eta>0$: \begin{equation*}
		M^{\eta}:=\bou(0_{\R^n},\eta)\cap M\;\; \et\;\; N^{\eta}:=\sph(0_{\R^n},\eta)\cap M.\end{equation*}
	Let  $X:=M\cup\{0\}$ and let $r:[0,1]\times  \Bb(0,\ep)\cap X \to \Bb(0,\ep)\cap X$ be a Lipschitz conic structure retraction of $X$ at $0$. We will assume $\ep<\frac{1}{2}$ as well as sufficiently small  for $N^\ep$ to be a $\cc^\infty$ manifold.

  We denote by $r_s^\eta :N^\eta \to N^{s\eta}$ the mapping induced by $r_s$, $s\in (0,1]$, $\eta \in (0, \ep]$. 	Since the bi-Lipschitz constant of  $r_s$  can only tend to infinity if $s$ draws near zero (see \cite[Remark $1.9$]{lprime}), by \L ojasiewicz's inequality (Proposition \ref{pro_lojasiewicz_inequality}),
	there are  positive constants  $\nu$ and  $C$ such that for all $s\in (0,1]$ we have for almost all $x\in N^{\eta}$, $\eta\le \ep$:
	\begin{equation}\label{eq_jacr_s}
		\jac \,r_s^\eta(x) \ge \frac{s^{\nu}}{C}.
	\end{equation}

We recall that local conic structure retractions are defined as in (\ref{eq_def_r_s}), using a homeomorphism $H$. This definition actually makes sense for each $(s,x)\in [0,\infty)\times \mep$ satisfying $s\le \frac{\ep}{|x|}$. We therefore can define a mapping $R:P \to M$, where $P=\{(t,x)\in \R\times M: 1\le t\le \frac{\ep}{|x|} \}$, by (we are now assuming $\xo=0$)
\begin{equation}\label{eq_Rt}R(t,x):=H(tH^{-1}(x)),\end{equation}
and we will denote by $R^\eta:[1,\frac{\ep}{\eta}] \times N^\eta \to M$ and $R^\eta_t: N^{\eta} \to N^{t\eta}$ the respective restrictions of $R$ and $R(t,\cdot)$.

We do not use the notation $r(s,x)$ when $s>1$,  since the properties of $R$ are different from the properties of $r$ listed in  Theorem \ref{thm_local_conic_structure}.
The mapping $R$ should rather be regarded as an inverse, as it indeed directly follows from the definitions that  $R_t^\eta$  is the inverse of $r_{1/t}^{t\eta}$. In particular, (\ref{eq_fbf_dutoc}) amounts to
 \begin{equation}\label{eq_fbf_dutoc2}
	\frac{t^{-\nu}}{C} \,\fbf(R(t,y))\le \fbf(y) \le C\, \fbf(R(t,y)).
\end{equation}


		\begin{pro}  There is a positive constant $C$  such that:
 \begin{enumerate}
\item For all $s\in (0,1)$ we have for almost all $x\in M^\ep$:  \begin{equation}\label{eq_der_r_s}
       \left|\frac{\pa r}{\pa s}(s,x)\right|\le C|x|.
      \end{equation}
       \item For each $v\in L^{p}(M^\ep)$,  $p\in [1,\infty)$, we have: 
       \begin{equation}\label{eq_coarea_sph}
  \left(\int_0 ^\ep ||v||_{L^p (N^\eta)}^p d\eta \right)^{1/p}  \le    ||v||_{L^p  (M^\ep)} \leq C \left(\int_0 ^\ep ||v||_{L^p (N^\eta)  }^p d\eta \right)^{1/p}.
       \end{equation}
 \item  For all $\eta\in (0,\ep)$, we have for all $(t,x)\in [1,\frac{\ep}{\eta}] \times N^\eta$:

\begin{equation}\label{eq_jad_reta}
	\jac R_t^\eta(x) \ge \frac{t^{m-1}}{C},\qquad m:=\dim M,
\end{equation}

\begin{equation}\label{eq_par}
	\left|\frac{\pa R^\eta}{\pa t}(t,x)\right| \le C\eta.
\end{equation}

 \end{enumerate}\end{pro}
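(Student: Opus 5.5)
Each item will be read off from Theorem \ref{thm_local_conic_structure}, applied with $\xo=0$: $H$ preserves the distance to the origin, $r_s$ is $Cs$-Lipschitz, $r_s^{-1}$ is Lipschitz, and $H^{-1}$ is Lipschitz away from $0$. The extra tools are the coarea formula (\ref{eq_coarea_formula}) for the function $x\mapsto |x|$ and the homogeneity of the cone $0*N^\ep$.

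\textbf{Item (1).} Write $r(s,x)=H(sH^{-1}(x))$ and use that $H$ is Lipschitz. For $s<s'$,
$$|r(s',x)-r(s,x)|\le L\,|(s'-s)H^{-1}(x)|=L(s'-s)\,|H^{-1}(x)|=L(s'-s)\,|x|,$$
because $H^{-1}$ preserves the norm. Hence $s\mapsto r(s,x)$ is $L|x|$-Lipschitz. Wherever the derivative exists, which is almost everywhere since $r$ is Lipschitz and definable, we get $|\pa r/\pa s|\le L|x|$.

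\textbf{Item (2).} Apply the coarea formula to $\rho(x)=|x|$ on $M^\ep$ with $g=|v|^p$:
$$\int_{M^\ep}|v|^p\,\jac\rho=\int_0^\ep\int_{N^\eta}|v|^p\,d\eta .$$
Here $\rho$ is a submersion off a negligible definable set (Sard for definable functions), so the formula applies after removing that set. Since $\jac\rho=|\nabla_M\rho|\le 1$, the left-hand inequality follows. For the right-hand one we need $|\nabla_M \rho|\ge 1/C$ almost everywhere on $M^\ep$. I would get this from the conic structure: the curve $s\mapsto r(s,x)$ has $\rho(r(s,x))=s|x|$, so $\frac{d}{ds}\rho(r(s,x))=|x|$, while by (1) $|\pa r/\pa s|\le C|x|$. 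The chain rule then gives $|x|\le |\nabla_M\rho|\cdot C|x|$, so $|\nabla_M\rho|\ge 1/C$. This is the main subtlety: the chain rule needs differentiability almost everywhere along the curves. It holds because $r$ is definable, hence smooth off a lower-dimensional set, and the curves through a.e.\ $x$ avoid that set for a.e.\ $s$.

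\textbf{Item (3), bound (\ref{eq_par}).} This is the same computation as in (1): $R(t,x)=H(tH^{-1}(x))$, so $|\pa_t R|\le L|H^{-1}(x)|=L\eta$.

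\textbf{Item (3), bound (\ref{eq_jad_reta}).} Since $R^\eta_t=(r^{t\eta}_{1/t})^{-1}$, it suffices to show $\jac r^{t\eta}_{1/t}\lesssim t^{1-m}$. The map $r_s$ is $Cs$-Lipschitz, so its restriction to the $(m-1)$-dimensional manifold $N^{t\eta}$ has Jacobian at most $(Cs)^{m-1}$. With $s=1/t$ this gives $\jac r^{t\eta}_{1/t}\le C^{m-1}t^{1-m}$, and inverting the Jacobian yields $\jac R^\eta_t\ge t^{m-1}/C'$ almost everywhere. If the statement is wanted everywhere, the same holds at all smooth points, and the inequality extends by definability on a dense set; "almost all" suffices in the applications.
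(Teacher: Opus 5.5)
Your proposal is correct. The paper gives no proof of this proposition and only refers to earlier work. Your arguments for (\ref{eq_der_r_s}) and (\ref{eq_par}) (Lipschitzness of $H$ together with $|H^{-1}(x)|=|x|$) and for (\ref{eq_jad_reta}) are the direct consequences of Theorem \ref{thm_local_conic_structure} one would expect. For (\ref{eq_jad_reta}) you use that $r^{t\eta}_{1/t}$ is $C/t$-Lipschitz on $N^{t\eta}$, hence $\jac r^{t\eta}_{1/t}\le (C/t)^{m-1}$, and that $R^\eta_t$ is its inverse. Since $R^\eta_t$ is only Lipschitz, ``almost every $x$'' is the right reading of (\ref{eq_jad_reta}). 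That is all Lemma \ref{lem_u_neta} uses, so your remark about extending the bound everywhere can be dropped.

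The one step with real content is the lower bound $\jac\rho\gtrsim 1$ for $\rho(x)=|x|$ on $M^\ep$, which the right-hand inequality of (\ref{eq_coarea_sph}) needs. You get it from the retraction itself: the chain rule along $s\mapsto r(s,x)$, using $|r(s,x)|=s|x|$ and (\ref{eq_der_r_s}). This is valid and keeps everything inside the conic structure theorem. The cost is the a.e.\ differentiability bookkeeping, which does close. For instance, for $x\in N^\ep$ one has $r(s,x)=H(sx)$, and $H$ is $\cc^1$ off a definable set of dimension $<m$, whose image under $H$ is $\mu_M$-null.

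An alternative is the curve selection lemma. Suppose $|\nabla_M\rho|<c<1$ on a definable set accumulating at $0$, and take a definable arc $\gamma$ in it tending to $0$. Then $\gamma'(\tau)\in T_{\gamma(\tau)}M$ and $\gamma'(\tau)/|\gamma'(\tau)|-\gamma(\tau)/|\gamma(\tau)|\to 0$. This forces $|\nabla_M\rho(\gamma(\tau))|\to 1$, a contradiction. That route gives the stronger pointwise fact $\jac\rho\to 1$ at $0$, so the constant can be taken close to $1$ and no null sets need to be discarded.

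One small correction: definable Sard says the critical \emph{values} of $\rho_{|M}$ form a finite set, not that the critical points are negligible. For $\ep$ below the smallest positive critical value there are no critical points in $M^\ep\setminus\{0\}$ at all. In any case the coarea formula (\ref{eq_coarea_formula}) holds without removing them, and your lower bound on $|\nabla_M\rho|$ already shows they form a null set.
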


For a proof, see  \cite[section $2.1$]{trace} or \cite[section 3.2]{lprime}. 

 \begin{lem}\label{lem_u_neta}  Let $M$ be a  
 	definable manifold with $0\in \mba$, and set $N^\eta:=\sph(0,\eta)\cap N$, $\eta>0$. Given $\fbf\in \icb(M)$ and  $p\in[1,\infty)$, there are positive numbers $\ep$ and $C$ such that for each $\cc^1$ function $u: M\to \R
$  supported in $ \bou(0,\ep)$  and all $\eta\le \ep$  we have:\begin{enumerate}[(i)]
\item  If $1\le p<m:=\dim M$ then
$$ ||u||_{L^p_\fbf (N^\eta)}\le  C\eta^{\frac{p-1}{p}}||\nabla u||_{L_\fbf^p(M)}  .$$
\item  If $p=m$ then \begin{equation*}\label{eq_neta_pgd}
          ||u||_{L^p_\fbf (N^\eta)}\le C\cdot \eta^{\frac{m-1}{m}} \cdot  \big(\ln \frac{1}{\eta}\big)^{\frac{m-1}{m}} \cdot ||\nabla u||_{L_\fbf^p(M)}.
                                    \end{equation*}
                              \item If $p>m$ then \begin{equation*}\label{eq_neta_pgd}
                                      ||u||_{L^p_\fbf (N^\eta)}\le C\cdot \eta^{\frac{m-1}{p}}\cdot ||\nabla u||_{L_\fbf^p(M)}.
                                    \end{equation*}

                             \end{enumerate}
\end{lem}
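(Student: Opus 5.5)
We argue as in the unweighted case of \cite{lprime}, transporting $u$ along the flow $R^\eta$ of (\ref{eq_Rt}) and controlling the weight with (\ref{eq_fbf_dutoc2}). We take $\ep$ small enough for Proposition \ref{pro_dutoc} to provide a Lipschitz conic structure retraction $r$ of $M\cup\{0\}$ at $0$ satisfying (\ref{eq_fbf_dutoc}). We build $R$ from this $r$, so that both (\ref{eq_fbf_dutoc2}) and the estimates (\ref{eq_der_r_s})--(\ref{eq_par}) hold. Since $u$ is supported in $\bou(0,\ep)$, we have $u(R(\frac{\ep}{\eta},x))=0$ for $x\in N^\eta$. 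Hence, for $x\in N^\eta$,
$$|u(x)|\le \int_1^{\ep/\eta}\left|\nabla u(R^\eta(t,x))\right|\cdot\left|\frac{\pa R^\eta}{\pa t}(t,x)\right|dt\le C\eta\int_1^{\ep/\eta}|\nabla u(R^\eta_t(x))|\,dt,$$
by (\ref{eq_par}).

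Next we raise this to the power $p$ and apply H\"older's inequality with a weight $t^{-\beta}$ in the $t$ variable:
$$|u(x)|^p\le C\eta^p\Big(\int_1^{\ep/\eta} t^{-\beta p'/p}\,dt\Big)^{p/p'}\int_1^{\ep/\eta} t^{\beta}|\nabla u(R^\eta_t(x))|^p\,dt .$$
We multiply by $\fbf(x)$ and use $\fbf(x)\le C\fbf(R^\eta_t(x))$, which is the right inequality of (\ref{eq_fbf_dutoc2}). We then integrate over $N^\eta$ and change variables $y=R^\eta_t(x)\in N^{t\eta}$, using (\ref{eq_jad_reta}): $\jac R^\eta_t\ge t^{m-1}/C$. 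This gives
$$\int_{N^\eta}|u|^p\fbf\le C\eta^p\,K(\eta)\int_1^{\ep/\eta} t^{\beta-(m-1)}\int_{N^{t\eta}}|\nabla u|^p\fbf\,d\mu\,dt ,$$
where $K(\eta)$ is the H\"older factor. Substituting $\rho=t\eta$ turns the outer integral into $\eta^{-1}\int_\eta^\ep (\rho/\eta)^{\beta-m+1}\|\nabla u\|^p_{L^p_\fbf(N^\rho)}d\rho$. This is in turn bounded by the left inequality of (\ref{eq_coarea_sph}), which also holds with the weight $\fbf$, since it is just the coarea formula for $|x|$ applied to $v=|\nabla u|\fbf^{1/p}$.

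It remains to choose $\beta=m-1$, so that the powers of $t$ inside the $\rho$-integral disappear. The estimate becomes
$$\|u\|^p_{L^p_\fbf(N^\eta)}\le C\eta^{p-1}K(\eta)\,\|\nabla u\|^p_{L^p_\fbf(M)},\qquad K(\eta)=\Big(\int_1^{\ep/\eta}t^{-(m-1)/(p-1)}dt\Big)^{p-1}.$$
For $p<m$ the integral in $K$ converges, which gives (i). For $p=m$ it equals $\ln(\ep/\eta)$, so $K\sim(\ln\frac1\eta)^{m-1}$, which gives (ii). For $p>m$ it is $\lesssim(\ep/\eta)^{(p-m)/(p-1)}$, so $K\lesssim\eta^{-(p-m)}$, and the total power $\eta^{p-1-(p-m)}=\eta^{m-1}$ yields (iii) after taking $p$-th roots. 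The case $p=1$ needs no H\"older step: we take $\beta=0$ and use $t^{-(m-1)}\le1$, which gives (i) with $\eta^0$; when $m=1$ it gives (iii).

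The main obstacle is the weight. We must make sure that comparing $\fbf(x)$ with $\fbf(R^\eta_t(x))$ only costs a constant in the useful direction, $\fbf(x)\lesssim\fbf(R(t,x))$. This is exactly what Proposition \ref{pro_dutoc} provides, whereas the lossy factor $t^{-\nu}$ is never needed here. We also need the retraction chosen for $\fbf$ to satisfy all the structural estimates (\ref{eq_jad_reta}) and (\ref{eq_par}). This holds because these estimates are valid for any Lipschitz conic structure retraction. Finally, $u$ is only $\cc^1$ on $M$ and $R$ is merely Lipschitz, so the fundamental theorem of calculus along $t\mapsto R^\eta(t,x)$ should be applied for almost every $x$, where $R^\eta(\cdot,x)$ is differentiable almost everywhere.
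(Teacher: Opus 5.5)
Your proof is correct and follows the paper's argument essentially step by step: the fundamental theorem of calculus along $R^\eta$, a weighted H\"older inequality in $t$, the one-sided bound $\fbf(x)\lesssim\fbf(R^\eta_t(x))$ from (\ref{eq_fbf_dutoc2}), the Jacobian bound (\ref{eq_jad_reta}), and (\ref{eq_coarea_sph}). The differences are cosmetic: you apply H\"older pointwise and then Tonelli instead of Minkowski's integral inequality, and your single weight $t^{m-1}$ works in all cases where the paper uses $t^{\frac{m+p}{2}-1}$ for (i). One slip: for $p=m=1$ your computation falls under case (ii), not (iii), though the resulting bound $C\|\nabla u\|_{L^1_\fbf(M)}$ is exactly what (ii) asserts.
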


\begin{proof}
 Thanks to Proposition \ref{pro_dutoc}, we know that for $\ep>0$ small there is a Lipschitz conic structure retraction $r:[0,1]\times  \Bb(0,\ep)\cap X \to \Bb(0,\ep)\cap X$ of $X:=M\cup \{\orn\}$ at $\orn$ for  which  (\ref{eq_fbf_dutoc}) holds.
  Let then $R^\eta$ and $R^\eta_t$ be as right after (\ref{eq_Rt}).

 We first focus on $(i)$. By the fundamental theorem of calculus, we have
for  each $\cc^1$ function  $u$  on $M$
  supported in $ \bou(0,\ep)$,  $\eta\in (0,\ep]$, and $x\in N^\eta$:
$$u(x)=-\int_1 ^{\ep/\eta} \frac{\pa (u\circ R^\eta)}{\pa t}(t,x) \,dt,$$
from which it follows that for $p<m$ (thanks to Minkowski's integral inequality):
\begin{eqnarray}\label{eq_u_eta_prel}
||u||_{L^p_\fbf (N^\eta)} &\le&\int_1 ^{\ep/\eta} \left(\int_{N^\eta}\left|\frac{\pa (u\circ R^\eta)}{\pa t}\right|^p(t,x)\,\fbf(x)\, dx\right)^{1/p}dt \nonumber\\
 &=&\int_1 ^{\ep/\eta} t^{-\frac{l}{p}}\left(\int_{N^\eta}\left|\frac{\pa (u\circ R^\eta)}{\pa t}\right|^p(t,x)\,t^l\,\fbf(x) dx\right)^{1/p}dt,\quad  \mbox{with $l:=\frac{m+p}{2}-1$},\nonumber\\
 &\le&\left(\int_1 ^{\ep/\eta} t^{-\frac{lp'}{p}}dt\right)^{1/p'}\left(\int_1 ^{\ep/\eta} \int_{N^\eta}\left|\frac{\pa (u\circ R^\eta)}{\pa t}\right|^p(t,x)\,t^{l}\,\fbf(x)\, dx\,dt\right)^{1/p}, \end{eqnarray}
 by H\"older's inequality (for $p>1$; if $p=1$ then simply notice that $t^{-l/p}\le 1$ since $l\ge 0$). For $p\in (1,m)$, we have\begin{equation*}\label{eq_lp_sur_prime}\frac{lp'}{p}=\frac{\frac{m+p}{2}-1}{p-1}>1,\end{equation*}
which means that the first integral of (\ref{eq_u_eta_prel}) is finite. Hence,  raising this estimate to the power $p$, we get for $p\in [1,m)$:
 \begin{eqnarray}\label{eq_u_neta}
||u||_{L^p_\fbf (N^\eta)}^p\nonumber
  &\lesssim&\int^{\ep/\eta} _{1}\int_{N^\eta} \left|\frac{\pa (u\circ R^\eta)}{\pa t}(t,x)\right|^p t^l\,\fbf(x)\, dx\,dt\\\nonumber
   &\le& \int^{\ep/\eta} _{1}\int_{N^\eta}  |\nabla u(R^\eta_t(x))|^p \left|\frac{\pa R^\eta}{\pa t} (t,x)\right|^p  t^l\,\fbf(x)\, dx\,dt\\\nonumber
           &\overset{(\ref{eq_par})}{\lesssim}&\eta^{p}\int^{\ep/\eta} _{1}t^{l}\int_{N^\eta} |\nabla u(R^\eta_t(x))|^p\, \fbf(x) \,dx\,dt\\\nonumber
   &\overset{(\ref{eq_jad_reta})}{\lesssim}&\eta^{p}\int^{\ep/\eta} _{1}t^{l-m+1}\int_{N^\eta} |\nabla u(R^\eta_t(x))|^p \, \jac R^\eta_t(x)\,\fbf(x) \,dx\,dt\\
   &\overset{(\ref{eq_fbf_dutoc2})}{\lesssim}&\eta^{p}\int^{\ep/\eta} _{1}t^{l-m+1}\int_{N^\eta} |\nabla u(R^\eta_t(x))|^p \, \jac R^\eta_t(x)\,\fbf(R^\eta_t(x)) \,dx\,dt\nonumber\\
                &=&\eta^{p}\int^{\ep/\eta} _{1}t^{l-m+1}||\nabla u||_{L^p_\fbf (N^{t\eta})}^pdt.
 \end{eqnarray}
                                     Observe now that $l-m+1=\frac{p-m}{2}$, which is negative for $p$ in $[1,m)$, so that:
 \begin{equation}\label{eq_pierwsze}\int^{\ep/\eta} _{1}t^{\frac{p-m}{2}}||\nabla u||_{L^p_\fbf (N^{t\eta})}^pdt \le  \int^{\ep/\eta} _{1}||\nabla u||_{L^p_\fbf (N^{t\eta})}^pdt\overset{(\ref{eq_coarea_sph})}{\lesssim}\frac{1}{\eta} ||\nabla u||_{L_\fbf^p(M)}^p\;,\end{equation}
                               which yields $(i)$. In order to show now $(iii)$, observe first that for all $p>m$ we have:
\begin{equation}\label{eq_tl}
\left( \int_1 ^{\ep/\eta} t^{-\frac{(m-1)p'}{p}}dt\right)^{1/p'}\lesssim \eta^{\frac{m}{p}-1},
\end{equation}
so that, by (\ref{eq_u_eta_prel}) for $l=m-1$, we get
\begin{eqnarray*}\label{eq_u_neta_2}
||u||_{L^p_\fbf (N^\eta)}^p
 \lesssim \eta^{m-p}\int^{\ep/\eta} _{1}\int_{N^\eta} \left|\frac{\pa (u\circ R^\eta)}{\pa t}(t,x)\right|^p t^l \;\fbf(x)\;dx\,dt.
\end{eqnarray*}
This double integral can be estimated by the computation carried out in (\ref{eq_u_neta}) (still for $l=m-1$).  We thus obtain:
\begin{eqnarray*}\label{eq_u_neta_2}
 ||u||_{L^p_\fbf (N^\eta)}^p\overset{(\ref{eq_u_neta})}{\lesssim} \eta^{m-p}\cdot \eta^p \int^{\ep/\eta} _{1}||\nabla u||_{L^p_\fbf (N^{t\eta})}^pdt \overset{(\ref{eq_coarea_sph})}{\lesssim} \eta^{m-1}||\nabla u||_{L_\fbf^p(M)}^p,
\end{eqnarray*}
as required. To show $(ii)$, just replace (\ref{eq_tl}) in the proof of $(iii)$ (assuming now $p=m$) with (for $m>1$):
$$\left( \int_1 ^{\ep/\eta} t^{-\frac{(m-1)p'}{p}}dt\right)^{1/p'}\lesssim  \; \big(\ln \frac{1}{\eta}\big)^{\frac{m-1}{m}}  , $$
and proceed in the same way (if $p=m=1$, simply notice that $t^{-(m-1)}\equiv 1$).
\end{proof}

 Compiling together $(i)$, $(ii)$, and $(iii)$ of the above lemma, we get that for any $p\in [1,\infty)$ we have for all $\cc^1$ functions  $u
$ on $M$ supported in $\bou(0,\ep)$ and  $\eta\le \ep$:
\begin{equation}\label{eq_link_forallp}
    ||u||_{L^p_\fbf (N^\eta)}\lesssim  \eta^{\frac{a-1}{p}}\cdot \big(\ln \frac{1}{\eta}\big)^{\frac{m-1}{m}}\cdot ||\nabla u||_{L_\fbf^p(M)},
\end{equation}
with $a:=\min(m,p)$, $m=\dim M$.

\subsection{Embedding theorems}	 Unlike the classical one, our Sobolev embedding theorem does not embed $W^{1,p}_\fbf(M)$ into $L_\fbf^\frac{mp}{m-p}(M)$, but rather into $L^{p+\beta}_\fbf(M)$ with  $\beta>0$ (see just below). The number $\beta$ depends on the geometry of the singularities of $\delta M$ and on the rate of vanishing of $\fbf$, and not only on $\dim M$.   The crucial point of the theorem below is however that $\beta$ can be chosen independent of $p$.
\begin{thm}\label{thm_sobolev_embedding}
 (Sobolev embedding)
Let $M$ be a  bounded
definable manifold.	Given $\fbf\in \icb(M)$,    there is  $\beta>0$ such that for each $p\in [1,\infty)$ we  have
		 for $u\in W^{1,p}_\fbf(M)$
	\begin{equation}\label{ineq}
		||u||_{L_\fbf^{p+\beta}(M) } \lesssim  ||u||_{W^{1,p}_{\fbf }(M) }.
	\end{equation}
	\end{thm}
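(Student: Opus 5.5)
We plan to prove (\ref{ineq}) for $u\in \cc^{1,p}_\fbf(M)$ and then pass to the completion. The approach is to localize near the points of $\adh M$ and apply Lemma \ref{lem_u_neta} on spheres centered at each point.

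\textbf{Localization.} Since $M$ is bounded, $\adh M$ is compact. For each $x_0\in \adh M$, after translation, Lemma \ref{lem_u_neta} (together with (\ref{eq_link_forallp})) provides $\ep_{x_0}>0$ and a constant $C_{x_0}$. We cover $\adh M$ by finitely many balls $\bou(x_j,\ep_j/2)$ and take a smooth partition of unity $(\phi_j)$ subordinate to the cover $\bou(x_j,\ep_j)$. For $u\in\cc^{1,p}_\fbf(M)$ we have $|\nabla(\phi_j u)|\lesssim |u|+|\nabla u|$, so it suffices to prove $||v||_{L^{p+\beta}_\fbf(M)}\lesssim ||\nabla v||_{L^p_\fbf(M)}$ for $\cc^1$ functions $v$ supported in $\bou(0,\ep)$, with $0\in \adh M$. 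If $\dim M=0$, then $M$ is finite and there is nothing to prove. One subtlety is that $\phi_j u$ need not vanish near $\delta M$. This is harmless, since Lemma \ref{lem_u_neta} only needs support in $\bou(0,\ep)$ as a subset of $M$, and the fundamental theorem of calculus along $R^\eta$ uses $u(R(\ep/\eta,x))=0$.

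\textbf{Gaining integrability.} By (\ref{eq_coarea_sph}) applied to $|v|^{p+\beta}\fbf$ (the coarea estimate holds with the density inserted, since it is pointwise in $\eta$), we have
\begin{equation*}
||v||^{p+\beta}_{L^{p+\beta}_\fbf(M)}\lesssim\int_0^\ep ||v||^{p+\beta}_{L^{p+\beta}_\fbf(N^\eta)}\,d\eta .
\end{equation*}
We cannot bound $L^{p+\beta}$ on $N^\eta$ directly, so we induct on dimension. The manifold $N^\eta$ has dimension $m-1$, and the family $(N^\eta)$ is definable. We would like the theorem for $N^\eta$, with density $\fbf_{|N^\eta}$ (still of the type in $\icb$, uniformly in $\eta$), with constants uniform in $\eta$. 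This uniformity is the main obstacle. Definable bi-Lipschitz triviality of the family $N^\eta$ (as in the proof of Proposition \ref{pro_f0}) only holds with constants blowing up polynomially in $\eta$ by \L ojasiewicz. We therefore plan a weaker, cruder route that is robust to such polynomial losses.

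\textbf{Crude route.} We use an interpolation between $L^p$ and an $L^\infty$-type bound. Applying Lemma \ref{lem_u_neta} to $|v|^q$ for suitable $q$ seems circular. Instead, we first obtain a pointwise bound: combining the fundamental theorem of calculus along the arcs $t\mapsto R^\eta(t,x)$ with Hölder as in (\ref{eq_u_eta_prel}), but without integrating over $N^\eta$, gives $|v(x)|$ in terms of $\nabla v$ on the arc. That bound is not controlled by the $L^p$ norm. We would therefore rather use the following plan. Write $|v|^{p+\beta}=|v|^{p}\,|v|^{\beta}$ and apply (\ref{eq_link_forallp}) to the $\cc^1$ function $w=|v|^{1+\beta/p}$ (regularized), with the exponent $p$. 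This gives
\begin{equation*}
||w||^p_{L^p_\fbf(N^\eta)}\lesssim \eta^{a-1}(\ln\tfrac1\eta)^{p}\,||\nabla w||^p_{L^p_\fbf(M)}, \qquad |\nabla w|\lesssim |v|^{\beta/p}|\nabla v| .
\end{equation*}
Integrating in $\eta$, and using $\int_0^\ep\eta^{a-1}(\ln\frac1\eta)^{p}d\eta<\infty$ since $a\ge1$, yields
\begin{equation*}
||v||^{p+\beta}_{L^{p+\beta}_\fbf}\lesssim \int |v|^{\beta}|\nabla v|^p\fbf .
\end{equation*}
By Hölder with exponents $\frac{p+\beta}{\beta}$ and $\frac{p+\beta}{p}$, this is at most $||v||^\beta_{L^{p+\beta}_\fbf}\,||\nabla v||^p_{L^{p(p+\beta)/p}_\fbf}$. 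Because $\nabla v$ then sits in $L^{p+\beta}$ rather than $L^p$, this route still needs adjusting. The fix we would try is to exploit the gain $\eta^{a-1}$ on the spheres. We choose $\beta$ small, depending only on the Łojasiewicz exponents $\nu$ in (\ref{eq_fbf_dutoc}) and (\ref{eq_jacr_s}) and not on $p$. We then redo the estimate (\ref{eq_u_neta}) with weight $t^{l}$ replaced by $t^{l+\beta'}$, so as to obtain a positive power $\eta^{\theta}$ with $\theta>0$ independent of $p$. Via a dyadic decomposition in $\eta$ and Hölder in the radial variable, this positive power is traded for the extra integrability $\beta$. Making this exponent bookkeeping uniform in $p$ is the step we expect to be hardest.
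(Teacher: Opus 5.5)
There is a genuine gap: nothing in your plan actually produces extra integrability, and the route you settle on cannot. Every estimate you use or propose to redo comes from the fundamental theorem of calculus along the trajectories $t\mapsto R^\eta(t,x)$. This covers Lemma \ref{lem_u_neta}, (\ref{eq_link_forallp}) applied to $|v|^{1+\beta/p}$, and the reweighted version of (\ref{eq_u_neta}) with $t^{l+\beta'}$, dyadic pieces in $\eta$ and H\"older in $t$. Such estimates only see $\frac{\pa (u\circ R^\eta)}{\pa t}$, i.e. the behaviour of $u$ along these trajectories, and they output $L^p$ bounds on each $N^\eta$ together with powers of $\eta$.

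An inequality obtained this way would remain true with $\nabla u$ replaced by the derivative along the trajectories, and such an inequality is false. Take the sector $M=\{0<|x|<1,\ 0<\vartheta<\pi/4\}\subset\R^2$ ($\vartheta$ the polar angle), where $R(t,x)=tx$. The function $u=\vartheta^{-1/(p+\beta)}\chi(|x|)$, with $\chi$ a cut-off supported in $(\ep/4,\ep/2)$, has $u$ and $\pa_r u$ in $L^p$, but $u\notin L^{p+\beta}$. Powers of $\eta$ only redistribute mass radially. The gain of integrability must come from tangential derivatives, i.e. from a Sobolev inequality on the spheres, which is precisely the induction on $\dim M$ you abandoned. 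Your crude route, as you noticed, also ends with $\|\nabla v\|_{L^{p+\beta}_\fbf}$ on the right, and the sketched fix does not address that either.

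The non-uniformity in $\eta$ that made you drop the induction is real, but polynomial losses are exactly what the paper's argument absorbs. The paper applies the induction hypothesis only once, on the fixed sphere $\nep$, which gives some $\tilde\beta>0$. It then transports this to $N^{s\ep}$ with the retraction of Proposition \ref{pro_dutoc}, which, unlike a bare bi-Lipschitz trivialization, also controls $\fbf$. Since $r_s$ is Lipschitz, $|\nabla(u\circ r_s)|\lesssim|\nabla u|\circ r_s$ and $\fbf\circ r_s\lesssim\fbf$. Moreover (\ref{eq_jacr_s}) and (\ref{eq_fbf_dutoc}) give $\jac r^\ep_s\gtrsim s^{\nu}$ and $\fbf\circ r_s\gtrsim s^\nu\fbf$. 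Together these yield
\[
\|u\|_{L^{p+\tilde\beta}_\fbf(N^{s\ep})}\lesssim s^{-2\nu/p}\,\|u\|_{W^{1,p}_\fbf(N^{s\ep})}.
\]
One then interpolates on each sphere:
\[
\|u\|_{L^{p+\beta}_\fbf(N^{s\ep})}\le\|u\|^{\theta}_{L^{p+\tilde\beta}_\fbf(N^{s\ep})}\,\|u\|^{1-\theta}_{L^{p}_\fbf(N^{s\ep})},\qquad \theta=\frac{(p+\tilde\beta)\beta}{(p+\beta)\tilde\beta},
\]
and bounds the $L^p$ factor by (\ref{eq_link_forallp}). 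The loss is now only $s^{-2\nu\theta/p}$. Integrating in $s$ via (\ref{eq_coarea_sph}) and H\"older with $\frac{\theta}{p}+\frac1q=\frac{1}{p+\beta}$ leaves the factor $\|s^{-2\nu\theta/p}\ln\frac{1}{s\ep}\|_{L^q([0,1])}$. This is finite as soon as $\beta\le\frac{p\tilde\beta}{(p+\tilde\beta)(4\nu+1)}$, so $\beta=\frac{\tilde\beta}{(1+\tilde\beta)(4\nu+1)}$ works for all $p\ge1$. The missing idea is this interpolation with a small exponent, which tames the polynomial loss coming from a single application of the induction hypothesis.
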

 \begin{proof}
 It suffices to establish such an inequality for an element $u$ of $ \cc_\fbf^{1,p}(M)$.	As  we can use a partition of unity,  we may assume $  u $ to be supported in $\bou(\xo,\ep)$, with $\xo \in \mba$ and $\ep>0$ as small as we please (independent of $u$).

We argue by induction on $\dim M$. The case $\dim M=0$ is trivial  and we can assume $x_0=0$. By Proposition \ref{pro_dutoc}, we know that there is a Lipschitz conic structure retraction $r:[0,1]\times\Bb(0,\ep)  \to \Bb(0,\ep)$ of $M\cup \{0\}$ such that (\ref{eq_fbf_dutoc}) holds.
    We will assume  $\nu$ to be a sufficiently big integer  for (\ref{eq_jacr_s}) and (\ref{eq_fbf_dutoc}) to both hold, and $\ep$ less than $\frac{1}{2}$ as well as sufficiently small for $\nep$ to be a $\cc^\infty$ manifold and  for  Lemma \ref{lem_u_neta} to hold. Let then $N^\eta:=\sph(0,\eta)\cap M$.

  Fix $p\in [1,\infty)$.  We first establish some  estimates that will be of service ((\ref{eq_v_circ_r_s}) and (\ref{eq_hr})).
Observe that for $v\in L ^{p}_\fbf(M)$ we have for every $s\in (0,1)$
\begin{eqnarray}\label{eq_v_circ_r_s}
	||v\circ r_s||_{L^p_\fbf(N^\ep)}&=& \left(\int_{N^\ep}|v\circ r_s|^p\,\fbf\right)^{1/p}\overset{(\ref{eq_jacr_s})}{\lesssim} \left(\int_{N^\ep}
| v(r_s(x))|^p\, \frac{\jac \, r_s^\ep (x)}{s^\nu}\ \fbf(x)dx \right)^{1/p}\nonumber\\
&\overset{(\ref{eq_fbf_dutoc})}\lesssim& \left(\int_{N^\ep}
| v(r_s(x))|^p\, \frac{\jac \, r_s^\ep (x)}{s^{2\nu}} \fbf(r_s(x))dx \right)^{1/p}= s^{-\frac{2\nu}{p}}||v||_{L^p_\fbf(N^{s\ep})} \,.
\end{eqnarray}

   Applying now the induction hypothesis to $N^\ep$ gives us  $\tilde{\beta}>0$  such that for $v\in W^{1,p}_\fbf(\nep)$:
\begin{equation}\label{ineq_hr}
	||v||_{L^{p+\betat}_\fbf(\nep)} \lesssim  ||v||_{W^{1,p}_\fbf(\nep)} .
\end{equation}
In order to apply this inequality to the restriction of $u\circ r_s$ to $\nep$, let us first make sure that this function belongs to  $W^{1,p}_\fbf (\nep)$  for each $s\in (0,1]$, i.e., let us show that we can approximate arbitrarily closely this function by $\cc^1$ ones in the $W^{1,p}_\fbf(\nep)$ norm. Since we can use a partition of unity, it is enough to show that every $a\in \nep$ has a neighborhood $U$ in $\nep$ on which we can find such approximations. Indeed, as soon as a neighborhood $U$ of such an $a$ is  relatively compact in $\nep$,  $u\circ r_s{_{|U}}$ and its gradient are bounded (since $r_s$ is Lipschitz and $u$ is $\cc^1$), which means that $u\circ r_s{_{|U}}\in \{u\in L ^p_{\mu_{\nep}}(U):|\nabla u|\in L ^p_{\mu_{\nep}}(U) \}$.  It is well-known that $\cc^{1,p}_{\mu_{\nep}}(U)$ is dense in this space (see \cite[Theorem 3.17]{adamsl}), i.e. there must be a sequence $u_i \in \cc^{1,p}_{\mu_{\nep}}(U)$ such that  $||u_i-u\circ r_s||_{W^{1,p}_{\mu_{\nep}}(U)}\to 0$.
 As $\fbf\in \icb(M)$, it is a bounded function, so that  $||u_i-u\circ r_s||_{W^{1,p}_{\fbf}(U)} \lesssim ||u_i-u\circ r_s||_{W^{1,p}_{\mu_{\nep}}(U)}$, from which we can conclude that $u_i$ tends to $u\circ r_s{_{|U}}$ in the $W^{1,p}_{\fbf}(U)$ norm, as needed.

  Observe that, as $r$ is Lipschitz and, in virtue of (\ref{eq_fbf_dutoc}), $\fbf \circ r_s\lesssim \fbf$ on $\nep$, we have for  $u\in \cc^{1,p}_\fbf(M)$ and  $s\in (0,1)$:
 \begin{eqnarray*}
  ||u||_{L^{p+\tilde{\beta}}_\fbf(N^{s\ep})}\lesssim  ||u\circ r_s||_{L^{p+\tilde{\beta}}_\fbf(\nep)} \overset{(\ref{ineq_hr})}{\lesssim}   ||u\circ r_s||_{W^{1,p}_\fbf(\nep)},
 \end{eqnarray*}
which by (\ref{eq_v_circ_r_s}) (applied to both $u\ $ and $|\nabla u|$, as $|\nabla (u\circ r_s)|\lesssim |\nabla u|\circ r_s$)  gives
\begin{equation}\label{eq_hr}
  ||u||_{L^{p+\tilde{\beta}}_\fbf(N^{s\ep})} \lesssim  s^{-\frac{2\nu}{p}}\cdot  || u||_{W^{1,p}_\fbf(N^{s\ep})}.
\end{equation}

We are ready to define the desired number $\beta$. Take any  $\beta\le\frac{p\tilde{\beta}}{(p+\tilde{\beta})(4\nu+1)}$ positive and, in order to have $\frac{1}{p+\beta}=\frac{\theta}{ p+\tilde{\beta}}+\frac{(1-\theta)}{ p}$, set
\begin{equation}\label{eq_theta}\theta:= \frac{(p+\tilde{\beta}) \beta}{(p+\beta)\betat}\;.\end{equation}
By H\"older's inequality, we have for all $u\in \cc^{1,p}_\fbf(M)$ and all $s\in (0,1)$
\begin{equation*}
 ||u||_{L^{p+\beta}_\fbf(N^{s\ep})} \le   ||u||_{L^{p+\tilde{\beta}}_\fbf(N^{s\ep})} ^{\theta} \cdot ||u||_{L^p_\fbf(N^{s\ep})} ^{(1-\theta)}\; ,
\end{equation*}
so that, substituting (\ref{eq_hr}) in the right-hand-side, we derive:
\begin{eqnarray}\label{eq_ulq_avant_h2}
    ||u||_{L^{p+\beta}_\fbf(N^{s\ep})}& \lesssim& \left( s^{-\frac{2\nu}{p}}   \cdot || u||_{W^{1,p}_\fbf(N^{s\ep})}\right)^{\theta}\cdot ||u||_{L^p_\fbf(N^{s\ep})} ^{(1-\theta)} \nonumber\\
    &=& s^{-\frac{2\nu\theta}{p}}   \cdot || u||_{W^{1,p}_\fbf(N^{s\ep})}^\theta\cdot||u||_{L^p_\fbf(N^{s\ep})}^{(1-\theta)}\nonumber\\
&\overset{(\ref{eq_link_forallp})}\lesssim& s^{-\frac{2\nu\theta}{p}}\cdot ||u||_{W^{1,p}_\fbf(N^{s\ep})}^{\theta} \cdot\ln \frac{1}{s \ep}\cdot || u||_{W^{1,p}_\fbf(M)}^{(1-\theta)} ,
    \end{eqnarray}
 when $ u$ is supported in $ \bou(0,\ep)$.  Raising to the power $p+\beta$ and integrating with respect to $s$, we get for $u\in \cc^{1,p}_\fbf(M)$ supported in $\bou(0,\ep)$:
\begin{equation}\label{eq_es1}
    ||u||_{L^{p+\beta}_\fbf(M)  }\overset{(\ref{eq_coarea_sph})}\lesssim  \left( \int_0 ^1 ||u||_{L^{p+\beta}_\fbf(N^{s\ep})} ^{p+\beta}ds \right)^{1/p+\beta} \overset{(\ref{eq_ulq_avant_h2})}\lesssim ||g||_{L^{p+\beta}([0,1])} \cdot || u||_{W^{1,p}_\fbf(M)}^{(1-\theta)}\;,
\end{equation}
where  $$g(s):= s^{-\frac{2\nu\theta}{p}}\cdot\ln \frac{1}{s \ep}  \cdot || u||_{W^{1,p}_\fbf(N^{s\ep})}^{\theta} .$$
 To estimate the $L^{p+\beta}$ norm of $g$, let us apply H\"older's inequality with $q:=\frac{p(p+\beta)}{p-\theta(p+\beta)} $, which satisfies
 $\frac{\theta}{p} +\frac{1}{q}=\frac{1}{p+\beta}.$
Setting $\xi(s):=s^{-\frac{2\nu\theta}{p}} \cdot\ln \frac{1}{s\ep}$, we get thanks to (\ref{eq_coarea_sph})
\begin{equation}\label{eq_g_p_plus_mu}
 ||g||_{L^{p+\beta}([0,1])} \lesssim  ||\xi ||_{L^{q}([0,1])}\;|| u||_{W^{1,p}_\fbf(M)}^{\theta}.
\end{equation}
Notice that
$$ \frac{2q\nu \theta}{p}=\frac{2(p+\tilde{\beta}) \nu \beta}{p \tilde{\beta}-(p+\tilde{\beta}) \beta}\le \frac{1}{2}$$
(an analysis of $\frac{2(p+\tilde{\beta}) \nu \beta}{p \tilde{\beta}-(p+\tilde{\beta}) \beta}>\frac{1}{2}$ leads to $\beta >\frac{p\tilde{\beta}}{(p+\tilde{\beta})(4\nu+1)}$, which contradicts our choice of $\beta$)
which means that $||\xi || _{L^{q}([0,1])}<\infty$.
Plugging (\ref{eq_g_p_plus_mu}) into (\ref{eq_es1}) thus
yields (\ref{ineq}).
 \end{proof}

We wish to embed the Sobolev spaces relative to push-forwards of definable measures into Lipschitz spaces (Corollary \ref{cor_mor}).
 Our strategy will be to rely on the Morrey embedding established in \cite{lprime} to first show:

\begin{thm}\label{thm_morrey} Let $M$ be a bounded
definable manifold.
	 Given $\fbf\in \ica(M)$, there is $p_0\ge 1$ such that for all $p\in [p_0,\infty)$ we have for   $u\in \cc_\fbf^{1,p}(M)$, setting $X:=\supp_M  \fbf$,
			$$\sup_X |u| \lesssim||u||_{ W_\fbf^{1,p}(M)}. $$

\end{thm}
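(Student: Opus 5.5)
The plan is to reduce to the unweighted Morrey embedding of \cite{lprime} on a suitable subset where $\fbf$ is bounded below by a power of the distance to a negligible set. Fix a definable set on which we can control $\fbf$ from below. By Proposition \ref{pro_f0}, $Z:=\fbf^{-1}(0)$ and $\fbf^{-1}(\infty)$ are definable. Removing the $\fbf\cdot\mu_M$-negligible set $B$ of Proposition \ref{pro_f0} (which we may enlarge to contain $Z$ and the lower-dimensional part), we get on $M':=M\setminus B$ the bound $\fbf(y)\ge c\, d(y,B\cup\delta M)^\alpha$. The set $M'$ is a bounded definable manifold, open in $M$. Moreover $X=\supp_M\fbf$ is contained in the closure in $M$ of $M'$: points of $M$ not in that closure have a neighborhood where $\fbf=0$ a.e. Since $u$ is continuous, it therefore suffices to bound $\sup_{M'}|u|$.

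The key step is to absorb the weight by a Hölder argument. The function $\rho(y):=d(y,B\cup\delta M)^\alpha$ is definable and bounded, and $\fbf\ge c\rho$ on $M'$. For $q<p$ we have
$$\int_{M'}|\nabla u|^q\,\rho^{\gamma}\le \Big(\int_{M'}|\nabla u|^p\rho^{\frac{\gamma p}{q}}\Big)^{q/p}\,|M'|^{1-q/p}.$$
Choosing $\gamma$ so that $\gamma p/q\ge 1$ and using that $\rho$ is bounded, this is $\lesssim \|\nabla u\|_{L^p_\fbf(M)}^q$, and the same holds for $u$ itself.

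I would then invoke the Morrey-type embedding of \cite{lprime} for a bounded definable manifold with a definable weight. If only the unweighted version is available, I would instead find a definable, locally bi-Lipschitz-to-inner-metric change of variables, or use the Lipschitz conic structure of $M'\cup\delta M'$. Near a point $x_0\in\delta M'$, Proposition \ref{pro_dutoc} and Theorem \ref{thm_local_conic_structure} give a retraction $r$. Then $u(x)-u(r_s(x))$ can be written as an integral along $s\mapsto r_s(x)$, with $|\partial_s r|\lesssim|x|$. The powers of distance in $\rho$ produce only polynomial losses $s^{-\nu}$, which are absorbed by Hölder's inequality in $s$ once $p$ is large. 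This is the same mechanism as in Lemma \ref{lem_u_neta}(iii), where $p>m$ gives $\|u\|_{L^p_\fbf(N^\eta)}\lesssim\eta^{(m-1)/p}\|\nabla u\|$, combined with an induction on dimension applied to the spheres $N^\eta$. Concretely, I would run an induction on $\dim M$ as in Theorem \ref{thm_sobolev_embedding}. Bound $\sup_{N^\eta}|u|$ by the inductive hypothesis on $N^\eta$ for the rescaled function $u\circ r_s$, then integrate in $\eta$ using (\ref{eq_link_forallp}), (\ref{eq_v_circ_r_s}) and (\ref{eq_coarea_sph}). The output is $\sup|u|\lesssim\|u\|_{W^{1,p}_\fbf}$ as soon as $p_0$ exceeds a threshold depending on $\nu$, $\alpha$ and $m$.

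The main obstacle is that here $\fbf$ is only in $\ica(M)$, not in $\icb(M)$. It may be unbounded or infinite, and Proposition \ref{pro_dutoc} requires $\icb$. I would handle this by replacing $\fbf$ with $\min(\fbf,1)$, which only weakens the right-hand side. The difficulty is that this function need not lie in $\icb(M)$. Instead I would truncate the families $Z_{i,y}$ and $\zeta_{i,y}$ to a large ball and to values $\le 1$, producing $\tilde\fbf\in\icb(M)$ with $\tilde\fbf\le\fbf$ and $\tilde\fbf^{-1}(0)=\fbf^{-1}(0)$; the latter holds because the truncation preserves positivity of the integrals. Hence $\supp_M\tilde\fbf=X$. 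Proving the estimate for $\tilde\fbf$ then gives the statement for $\fbf$.
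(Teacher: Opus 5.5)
Your proposal has a genuine gap: your reduction never gets rid of the weight. The H\"older step you call the key step,
\[
\int_{M'}|\nabla u|^q\rho^\gamma\le\Big(\int_{M'}|\nabla u|^p\rho^{\gamma p/q}\Big)^{q/p}|M'|^{1-q/p},
\]
only uses the \emph{upper} bound $\rho\lesssim 1$. Since you need $\gamma\ge q/p>0$, what you end up controlling is still a norm with the degenerate weight $\rho^\gamma$, which vanishes on $B\cup\delta M$. The Morrey embedding available from \cite{lprime} (Corollary 6.3, the one the paper uses) is for the \emph{unweighted} space $W^{1,p}(M')$, so it cannot be applied. Invoking ``the Morrey-type embedding of \cite{lprime} with a definable weight'' essentially assumes the statement you are proving, with $\fbf$ replaced by $\rho^\gamma$.

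Your fallback does not close the gap either. Lemma \ref{lem_u_neta}, (\ref{eq_v_circ_r_s}) and (\ref{eq_coarea_sph}) only control quantities after integration in $\eta$: the norm $\|u\|_{W^{1,p}_\fbf(N^{s\ep})}$ on a single sphere is not bounded by $\|u\|_{W^{1,p}_\fbf(M)}$, which is why the proof of Theorem \ref{thm_sobolev_embedding} integrates in $s$. Applying an inductive sup bound sphere by sphere therefore gives only an integrated bound on $\eta\mapsto\sup_{N^\eta}|u|$, not a bound on its supremum. This route also drags in Proposition \ref{pro_dutoc}, hence the $\icb$ requirement and your truncation. As stated, that truncation fails: cutting the $Z_{i,y}$ by a fixed ball need not preserve $\fbf^{-1}(0)$, since fibers can escape to infinity. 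For example, $Z_y=(1/y,\infty)$ with $\zeta_y(x)=x^{-2}$ gives $\fbf(y)=y>0$, but the truncated function vanishes for small $y$.

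The missing idea is to apply H\"older in the opposite direction, using the \emph{lower} bound $\fbf\ge c\,d(\cdot,B\cup\delta M)^\alpha$ from Proposition \ref{pro_f0}. That proposition holds for all of $\ica$, so no boundedness of the density and no Proposition \ref{pro_dutoc} are needed. Writing $v^p=v^p\fbf^{1/q}\cdot\fbf^{-1/q}$, one gets
\[
\int_V v^p\le\Big(\int_V v^{pq}\fbf\Big)^{1/q}\Big(\int_V\fbf^{-q'/q}\Big)^{1/q'},
\]
so everything reduces to integrability of a small negative power of $d(\cdot,B\cup\delta M)$ on $M'$.

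On the dyadic shells $V_i=\{2^{-i}\le d(\cdot,B\cup\delta M)\le 2^{1-i}\}$ one has $\fbf\ge c2^{-\alpha i}$. One also has $\mu_{M'}(V_i)\lesssim 2^{-i}$, because $\dim\delta M'<\dim M'$ (\cite[Proposition 4.3.4]{livre}). With $q=2\alpha+1$ this gives $\int_{V_i}v^p\lesssim 2^{-i/(2q')}\|v\|^p_{L^{pq}_\fbf(M')}$, and summing over $i$ yields $\|v\|_{L^p(M')}\lesssim\|v\|_{L^{pq}_\fbf(M')}$. Applied to $u$ and $|\nabla u|$, this gives $\|u\|_{W^{1,p}(M')}\lesssim\|u\|_{W^{1,pq}_\fbf(M')}$: the weight is traded for a loss in the exponent. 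The unweighted Morrey embedding of \cite{lprime}, valid for $p\ge a$, then gives the theorem with $p_0=qa$.

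Your first paragraph (the set $B$, $M'=M\setminus B$, density of $M'$ in $X$) is fine and matches the paper. It is the weight-removal step that must be replaced.
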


\begin{proof}   The case  $\dim M=0$ being obvious,  we will assume $\dim M\ge 1$. Since $M$ is bounded, up to a homothetic transformation, we can suppose that it fits in a unit ball. As, by Proposition  \ref{pro_f0}, $\fbf^{-1}(\infty)$ is definable, possibly splitting $M$ into several manifolds, we can suppose that $\fbf$ is everywhere finite or everywhere infinite on $M$. If $\fbf$ is everywhere infinite then $\cc^{1,p}_\fbf(M)=\{0\}$  and the result is obvious. Otherwise, by Proposition  \ref{pro_f0} again, there is an $\fbf\cdot \mu_M$-negligible closed definable subset $B$ of $ M$ such that (\ref{eq_order_fbf_au_bord}) holds. Let us observe that since $B$ is $\fbf\cdot \mu_M$-negligible, $M':=M\setminus B$ is dense in  $\supp_M \fbf$.

For $i\in \N$, set
$$V_i:=\{x\in M': 2^{-i} \le d(x,B\cup \delta M )\le 2^{1-i} \}.$$
By (\ref{eq_order_fbf_au_bord}), we have   on $V_i$
\begin{equation}\label{eq_f2nu}
	\fbf(x) \ge c2^{-\alpha i}.\end{equation}
  Moreover, as $\dim \delta M'<\dim M'$,  it follows from  \cite[Proposition $4.3.4$]{livre} that there is a constant $C$  such that  for all $i\in \N$
  \begin{equation}\label{eq_muMV_i}\mu_{M'}(V_i) \le C 2^{-i}. \end{equation}

    We now claim that if we set $q:=  2\alpha+1$ then we have for  $v\in L^p_\fbf (M)$ (for each $p\in [1,\infty)$)
    \begin{equation}\label{eq_lpq}
    ||v||_{L^p(M')}	\lesssim ||v||_{L^{pq}_\fbf(M')}. \end{equation}
    To show this, fix  $p$ and  take  $v\in L^p_\fbf (M)$ nonnegative. Write first (we integrate with respect to $\mu_{M'}$)
\begin{equation}\label{eq_pq_Vi}
 \int_{V_i} v^p = \int_{V_i} v^p \; \fbf^\frac{1}{q}  \; \fbf^{-\frac{1}{q}} \le  \left(\int_{V_i} v^{pq} \; \fbf  \right)^\frac{1}{q}\left(\int_{V_i} \fbf^{-\frac{q'}{q}}  \right)^\frac{1}{q'},
\end{equation}
using H\"older's inequality. The second factor of the right-hand-side can be estimated as follows: 
$$\left(\int_{V_i}  \fbf^{-\frac{q'}{q}} \; \right)^\frac{1}{q'}  \overset{(\ref{eq_f2nu})}{\le}  \left(\int_{V_i} c^\frac{1}{1-q} 2^{\frac{i\alpha}{q-1}}  \right)^\frac{1}{q'} \overset{(\ref{eq_muMV_i})}\le  \left(Cc^\frac{1}{1-q}  2^{\frac{i\alpha}{q-1}-i}  \right)^\frac{1}{q'}=
 C^\frac{1}{q'} c^\frac{-1}{q} 2^{-\frac{i}{2q'}}.$$
 Plugging it into (\ref{eq_pq_Vi}), we get
$$ \int_{V_i} v^p\lesssim 2^{-\frac{i}{2q'}} \cdot ||v||_{L^{pq}_\fbf(M')}^p,$$  where the constant is independent of both $i$ and $v$.
Summing over all natural integers $i$ and raising the sum to the power $1/p$, we get
$$||v||_{L^p(M')} \lesssim  \left(\sum_{i\in \N}   \,2^{-\frac{i}{2q'}}\right)^{1/p} \,||v||_{L^{pq}_\fbf(M')}\,,$$ where the constant is independent of $v$.
Since $\sum_{i\in \N}  2^{-\frac{i}{2q'}} $ is convergent, this gives (\ref{eq_lpq}).

Now, for   $u\in \cc_\fbf^{1,p}(M)$, applying  (\ref{eq_lpq}) to $u$ and $|\nabla u|$, we get
$$   ||u||_{W^{1,p}(M') }	\lesssim ||u||_{W^{1,pq}_\fbf(M')}\;.$$
 Moreover,
  it follows from     \cite[Corollary $6.3$]{lprime} that there is a real number $a\ge 1$ such that for all $p\ge a$  we have for such $u$
  $$ \sup_{M'} |u| \lesssim||u||_{W^{1,p}(M')} ,$$
which, together with the preceding estimate, gives the desired fact  with $p_0:=qa$.  \end{proof}

   If $x$ and $y$ are two points of the same connected component of a definable set  $X$, we set
\begin{equation}\label{eq_inner_dist}
d_X(x,y):=\inf\{l(\gamma)\,:\, \gamma:[0,1]\to X\mbox{ continuous },\;\; \gamma(0)=x, \gamma(1)=y\},
\end{equation} where $l(\gamma)$ denotes the length of $\gamma$. 
When $x$ and $y$ do not belong to the same connected component, we put $d_X(x,y):=\infty$. We call $d_X(\cdot,\cdot)$ the {\bf inner metric of $X$}.
In \cite{livre}, the infimum is taken on the definable arcs only but this makes no difference \cite{lipsapprox} (connected definable sets are definably path connected).

We say that  $f:X\to\R$ is {\bf Lipschitz with respect to the inner metric} if there exists a constant $C$ such that the following inequality holds for any $x,y\in X$
\begin{equation}\label{eq_inner_lips}|f(x)-f(y)|\le Cd_X(x,y).\end{equation}
We denote by   $\check{\cc}^{0,1}(X)$ the space of the functions on $X$ that are Lipschitz with respect to the inner metric of $X$.	 We endow this space with its natural norm \begin{equation}\label{eq_co1_norme}
||u||_{\check{\cc}^{0,1}(X)}:=\sup_X |u(x)|+ \sup_{X\times X} \frac{|u(x)-u(y)|}{d_X(x,y)}.                                                                                                                                                           \end{equation}


\begin{cor}\label{cor_mor}
		Let $\omd$ be a bounded  open definable subset  of $\R^n$ and $\Phi:(A,\mu)\to \omd$  a definable mapping, with $(A,\mu)$ definable measured space. Given $p\in [1,\infty)$, there is $k\in \N$ for which we have the  natural continuous embedding
	$$W_ {\Phi_*\mu}^{k,p}(\omd)\hookrightarrow \check{\cc}^{0,1}(\supp_\omd \, \Phi_*\mu). $$
\end{cor}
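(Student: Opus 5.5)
We decompose the push-forward measure and then combine the Sobolev embedding with the Morrey-type estimate on each piece. By Proposition \ref{pro_push}, $\Phi_*\mu=\sum_{M\in\F}\fbf_M\cdot\mu_M$, with finitely many bounded definable manifolds $M\subset\omd$ and densities $\fbf_M\in\ica(M)$. It suffices to prove the estimate for $u\in\cc^{k,p}_{\Phi_*\mu}(\omd)$ and pass to the completion. For such $u$,
$$\|u\|_{W^{k,p}_{\Phi_*\mu}(\omd)}\sim\sum_{M\in\F}\sum_{i\le k}\|D^iu\|_{L^p_{\fbf_M}(M)}.$$
The tangential derivatives of $u_{|M}$ are bounded pointwise by the ambient ones, so $\|\partial^\alpha u_{|M}\|_{W^{1,q}_{\fbf_M}(M)}$ is controlled by the $L^q_{\fbf_M}$ norms of the ambient derivatives of $u$ of order $\le|\alpha|+1$. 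Everything therefore reduces to estimates on each $M$ separately, applied to $u$ and to its first derivatives $\partial_ju$, all restricted to $M$.

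Fix $M$. The key step is to apply Theorem \ref{thm_morrey} to $u$ and to each $\partial_ju$, obtaining
$$\sup_{X_M}\bigl(|u|+|\nabla u|\bigr)\lesssim\|u\|_{W^{2,p'}_{\fbf_M}(M)},\qquad X_M=\supp_M\fbf_M,$$
for every $p'\ge p_0$. To pass from $p$ to $p'\ge p_0$, I would iterate Theorem \ref{thm_sobolev_embedding}. Since $\beta$ is independent of the exponent, each application gains $\beta$ in integrability at the cost of one derivative: $\|v\|_{L^{p+j\beta}_{\fbf}}\lesssim\|v\|_{W^{j,p}_\fbf}$. Taking $j$ with $p+j\beta\ge p_0$ and $k=j+2$ gives the bound above for all $u$.

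One gap has to be handled here. Theorem \ref{thm_sobolev_embedding} needs $\fbf\in\icb(M)$, whereas Proposition \ref{pro_push} only gives $\fbf_M\in\ica(M)$. I would first refine $\F$ (using Proposition \ref{pro_f0} and a partition of $M$) so that each density is bounded away from $\infty$, and then truncate the families $Z_{i,y}$ and $\zeta_{i,y}$ into uniformly bounded ones, which yields $\tilde\fbf\le\fbf_M$ with $\tilde\fbf\in\icb(M)$ and the same support up to a negligible set. Pieces where the density is infinite force $u=0$ there, so they can be ignored. Alternatively, one can note that for a finite measure the $L^{q}$ norms decrease with $q$. I expect this reduction to be the main technical obstacle.

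Summing over $M\in\F$ gives $\sup|u|+\sup|\nabla u|\lesssim\|u\|_{W^{k,p}_{\Phi_*\mu}}$ on $X:=\supp_\omd\Phi_*\mu=\bigcup_M\adh{X_M}\cap\omd$; the sup bound extends to closures by continuity. For the Lipschitz seminorm, take $x,y\in X$ and a continuous path $\gamma$ in $X$ joining them. I would approximate $\gamma$ by a definable arc, which is possible since connected definable sets are definably path connected, and which changes $d_X$ by nothing. Such an arc is piecewise $\cc^1$ inside the strata $\adh{X_M}\cap\omd$. Integrating the gradient bound along $\gamma$ then gives
$$|u(x)-u(y)|\le\sup_X|\nabla u|\cdot l(\gamma),$$
and taking the infimum over $\gamma$ yields the $\check{\cc}^{0,1}(X)$ bound. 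Since $u$ is $\cc^k$ on $\omd\supset X$, the ambient gradient along the arc is controlled by continuity from $\bigcup X_M$. Injectivity and naturality of the embedding come from density of $\cc^{k,p}$ in $W^{k,p}$.
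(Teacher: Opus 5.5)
Your overall structure matches the paper's: decompose via Proposition \ref{pro_push}, iterate Theorem \ref{thm_sobolev_embedding} using that $\beta$ does not depend on $p$, apply Theorem \ref{thm_morrey} to $u$ and its first derivatives, then integrate along paths. The gap is the step you flagged as the main obstacle, and as written the chain of inequalities does not close.

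\textbf{The chain must use the truncated density throughout.} Your auxiliary density satisfies $\tilde\fbf\le\fbf_M$. Iterating Theorem \ref{thm_sobolev_embedding} for $\tilde\fbf$ therefore only controls $L^{p'}_{\tilde\fbf}(M)$ norms, which are weaker than the $L^{p'}_{\fbf_M}(M)$ norms in your displayed Morrey bound. You must apply Theorem \ref{thm_morrey} to $\tilde\fbf$ itself; this is legitimate since $\icb(M)\subset\ica(M)$. But then you only bound $u$ on $\supp_M\tilde\fbf$. So you need $\tilde\fbf\in\icb(M)$, $\tilde\fbf\le\fbf_M$, and $\tilde\fbf^{-1}(0)=\fbf_M^{-1}(0)$ exactly.

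\textbf{Truncating the fibers $Z_{i,y}$ to a fixed bounded region does not give this.} Take $A=\{(y,x):0<y<1,\ 1/y<x<1/y+1\}$ with Lebesgue measure and $\Phi(y,x)=y$. Then $\fbf\equiv 1$ on $(0,1)$ with $Z_y=(1/y,1/y+1)$. However, $\int_{Z_y\cap\bou(0,R)}1=0$ for every $y\le 1/R$, so all control of $u$ near $0$ is lost. Two side remarks also fail:
\begin{itemize}
\item Proposition \ref{pro_f0} cannot make $\fbf_M$ bounded, only finite.
\item The remark on $L^q$ norms for finite measures does not help: $\Phi_*\mu$ need not be finite, and it produces no $\icb$ density anyway.
\end{itemize}

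\textbf{The missing idea is the paper's preliminary reduction.} Replace $(A,\mu,\Phi)$ by $(\hh_{n'}(A),(\hh_{n'})_*\mu,\Phi\circ\hh_{n'}^{-1})$, where $A\subset\R^{n'}$. This is again a definable measured space with a definable mapping, and it leaves $\Phi_*\mu$ unchanged. It also makes every fiber $\Phi_S^{-1}(y)$ in Proposition \ref{pro_push} lie in the unit ball. Only the densities $\zeta_S$ then need cutting off. Set $\adh\fbf_M(y):=\sum_S\int_{\Phi_S^{-1}(y)}\min(\zeta_S,1)$. This function lies in $\icb(M)$ and satisfies $\adh\fbf_M\le\fbf_M$. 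It vanishes exactly where $\fbf_M$ does, because $\min(\zeta_S,1)$ and $\zeta_S$ have the same zero set, so the supports coincide.

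With this, run both Theorem \ref{thm_sobolev_embedding} and Theorem \ref{thm_morrey} with $\adh\fbf_M$, and finish with $\adh\fbf_M\cdot\mu_M\le\fbf_M\cdot\mu_M$. That is exactly the paper's proof. The rest of your argument is fine: the derivative count $k=j+2$ and the integration along paths, where any rectifiable path in $X\subset\omd$ works since $u$ is $\cc^1$ on $\omd$.
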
	
\begin{proof} 
	Possibly replacing $A$ with $\hh_{n'} (A)$ (see (\ref{eq_nu}) for $\hh_{n'}$), where $n'$ is the dimension of the ambient space of $A$ (i.e. $A\subset \R^{n'}$), we can assume that $A$ is bounded. By Proposition \ref{pro_push}, $\Phi_*\mu=  \sum_{M\in \F} \fbf_M \cdot \mu_M$, where $\F$ is a finite collection of definable submanifolds of  $\omd$ and  $\fbf_M \in \ica(M)$ for each $M$.  The latter proposition actually also ensures that $$\fbf_M(y)=\sum_{S\in \tilde{\F}_M} \int_{\Phi_S^{-1}(y)}\zeta_S(x) \,d\mu_{\Phi_S^{-1}(y)}(x),$$
	for some finite family of definable manifolds $\tilde{\F}_M$ and some definable functions $\zeta_S$ (here $\Phi_S:S\to M$ is the restriction of $\Phi$  to  $S\subset A$), which means that each $\fbf_M$ is obtained by integration on a uniformly bounded definable family (since we assume $A$ bounded). Possibly replacing the manifolds $M\in \F$ with the respective nonsingular parts of $\{\fbf_M \ne 0\}$ (which  are definable sets, in virtue of Proposition \ref{pro_f0}), we may suppose  each $\fbf_M$ to be  everywhere nonzero  on $M$, and therefore that $ X:=\supp_\omd\,\Phi_*\mu$ coincides with $\cup_{M\in \F} \mba\cap \omd$.
	
	Fix $p\in [1,\infty)$. We first show that for $k$ bigger than some $k_0$ we have for $u\in \cc^{k,p}_{\Phi_*\mu}  (\omd)$
	\begin{equation}\label{eq_pf_linfty}
		\sup_X|u| \lesssim ||u||_{W^{k,p}_{\Phi_*\mu} (\omd)}.\end{equation}
	 Notice that if we show (for each $M\in \F$) that for $u\in\cc^{k,p}_{\Phi_*\mu} (\omd)$
	\begin{equation}\label{eq_pf_linfty_M}
		\sup_M |u| \lesssim ||u||_{W^{k,p}_{\Phi_*\mu}  (\omd)}\;,
	\end{equation}
then   the $\max$ of these estimates  gives  (\ref{eq_pf_linfty}).

Fix $M\in \F$.	We wish to apply Theorems \ref{thm_sobolev_embedding} and \ref{thm_morrey} to $\fbf_M$. The problem is that Theorem \ref{thm_sobolev_embedding} demands $\fbf_M$ to belong to $\icb(M)$, and  the $\zeta_S$ might be unbounded (while, as already observed, the $\Phi_S^{-1}(y)$ are bounded).
	  To remediate  this inconvenience, set
	  	   $$\adh{\fbf}_M(y):=\sum_{S\in \tilde{\F}_M} \int_{\Phi_S^{-1}(y)}\min (\zeta_S(x),1) \,d\mu_{\Phi_S^{-1}(y)}(x)  ,$$
	as well as $\adh\mu=  \sum_{M\in \F} \adh\fbf_M \cdot \mu_M$. Observe  that $\adh{\mu}\le \Phi_*\mu$, so that we have for all $l$ and $p$
	 \begin{equation}\label{eq_mu_mut}
	 		||u||_{W^{l,p}_{\adh{\mu}} (\omd)} \le ||u||_{W^{l,p}_{   \Phi_*\mu} (\omd)}.
	 \end{equation}
 Moreover,
	  since  $\adh{\fbf}_M\in \icb(M)$, Theorem \ref{thm_sobolev_embedding} establishes that for
	   $u\in \cc^{1,p}_{\adh \mu}  (\omd)\supset \cc^{1,p}_{   \Phi_*\mu}  (\omd)$, $$	||u_{|M}||_{L^{p+\beta}_{ \adh{\fbf}_M}(M)}\lesssim ||u_{|M}||_{W^{1,p}_{ \adh{\fbf}_M} (M)} \le||u||_{W^{1,p}_{ \adh{\mu}} (\omd)}  \,, $$
	   and consequently (applying this fact to the partial derivatives of $u$ up to order $l\ge 1$) we get for $u\in\cc^{l+1,p}_{\Phi_*\mu}  (\omd)$
	   \begin{equation}\label{eq_pl}
	   	||u_{|M}||_{W^{1,p+l\beta}_{ \adh{\fbf}_M} (M) }\lesssim ||u||_{W^{l+1,p}_{\adh{\mu}} (\omd) }.
	   \end{equation}
	  Furthermore,  for $l$ not smaller than some $l_0\ge 1$, Theorem \ref{thm_morrey} shows that for $u\in  \cc^{1,p+l\beta}_{\bar\mu}  (\omd)\supset \cc^{1,p+l\beta}_{\Phi_*\mu}  (\omd)$ we have
	  	   \begin{equation}\label{eq_infty}
	  	\sup_M|u| \lesssim ||u_{|M}||_{W^{1,p+l\beta}_{\adh\fbf_M} (M) }.
	  \end{equation}
	   To summarize, for $u\in\cc^{l+1,p}_{\Phi_*\mu}  (\omd)$, $l\ge l_0$, we have
	  $$ \sup_M |u|\overset{(\ref{eq_infty})}\lesssim ||u_{|M}||_{W^{1,p+ l\beta}_{\bar\fbf_M}  (M) } \overset{(\ref{eq_pl})}\lesssim||u||_{W^{l+1,p}_{\adh{\mu}}  (\omd)}\overset{(\ref{eq_mu_mut})}\le ||u||_{W_{\Phi_*\mu}^{l+1,p} (\omd)  } \;, $$
	 yielding (\ref{eq_pf_linfty_M}) for $k\ge l_0+1$, and hence  (\ref{eq_pf_linfty}) for $k$ bigger than some $k_0$.
	 
	 Now, applying (\ref{eq_pf_linfty}) to $u$ and to the partial derivatives of $u$ (of order $1$) yields for $u\in \cc^{k,p}_{\Phi_*\mu}(\omd)$, $k\ge k_0+1$,
	$$ \sup_X |u| +\sup_X|\nabla u|  \lesssim ||u||_{W^{k,p}_{\Phi_*\mu} (\omd)  }.$$
	We conclude that each element  $u$ of $ \cc^{k,p}_{\Phi_*\mu}(\omd)$, $k\ge  k_0+1$,  is  Lipschitz with respect to the inner metric of $X$ with a Lipschitz constant not greater than $||u||_{W^{k,p}_{\Phi_*\mu} (\omd)}$ times a constant  independent of $u$, yielding the claimed embedding. 
\end{proof}

As explained in the introduction, this corollary yields existence of a natural   embedding of $X:=\supp_\omd \Phi_* \mu $ into $W^{k,2}_{ \Phi_* \mu }(\omd)$.
Namely, for $k$ as in this corollary let
$$\hcr_k:= W^{k,2}_{ \Phi_* \mu }(\omd)$$
be endowed with its natural inner product $ \langle \cdot,\cdot\rangle_{\hcr_k}$, and denote by $\iota_{\hcr_k}:\hcr_k \to \hcr'_k$ the identification $u\mapsto \langle u,\cdot\rangle_{\hcr_k}$.  Since the above corollary establishes that $\delta_x:\hcr_k \to \R, u\mapsto u(x)$ (the Dirac distribution) belongs to $     \hcr'_k$ for each $x$ in $X$, we can set $$\phi(x):=\iota_{\hcr_k}^{-1}(\delta_x)\in \hcr_k.$$
Then, by Corollary \ref{cor_mor} and (\ref{eq_k_holder}) (with $\theta=1$), $\phi$ is Lipschitz. Moreover,  given any distinct $x_1,\dots, x_l$ in $X$, there is for each $j$ a function $g_j \in \cc_0^\infty(\R^n)$ such that $g_j(x_i)=\delta_{ij}$ (Kronecker symbol). When $\mu$ is finite, we have $g_j \in\hcr_k$, from which we  deduce that the kernel resulting from the feature map $\phi$ is definite.  We have proved:
\begin{cor}\label{cor_ker}
Let $\Phi:(A,\mu)\to \omd$ be as in Corollary \ref{cor_mor} and set $X:=\supp_\omd \Phi_* \mu $ as well as $\hcr_k:= W^{k,2}_{ \Phi_* \mu }(\omd)$.
For $k$ large enough, the embedding $$(X,d_X)\hookrightarrow (\hcr_k, ||\cdot||_{\hcr_k}), \quad x\mapsto \phi(x)=\iota^{-1}_{\hcr_k} (\delta_x),$$ is Lipschitz.  If $\mu$ is finite then the kernel $$\kbf(x,x'):=\langle \phi(x),\phi(x')\rangle _{\hcr_k}$$
is definite.
\end{cor}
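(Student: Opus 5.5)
The statement is Corollary \ref{cor_ker}, and most of it is already contained in the discussion preceding it. The plan is to make three points rigorous: $\phi$ is well defined, $\phi$ is Lipschitz for $d_X$, and the Gram matrices of $\kbf$ are positive definite when $\mu$ is finite.

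\emph{Well-definedness.} Take $k$ as in Corollary \ref{cor_mor} with $p=2$. Then every $u\in\cc^{k,2}_{\Phi_*\mu}(\omd)$ restricts to an element of $\check{\cc}^{0,1}(X)$ with
\[
\|u\|_{\check{\cc}^{0,1}(X)}\lesssim \|u\|_{\hcr_k}.
\]
By density this bound extends to the completion $\hcr_k$: the limit is taken uniformly on $X$, so $u(x)$ makes sense for every $x\in X$. In particular, for each $x\in X$ we have $|\delta_x(u)|\le \sup_X|u|\lesssim\|u\|_{\hcr_k}$. Hence $\delta_x\in\hcr_k'$, and the Riesz isomorphism $\iota_{\hcr_k}$ gives $\phi(x)\in\hcr_k$ with $\langle u,\phi(x)\rangle_{\hcr_k}=u(x)$.

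\emph{Lipschitz property.} Apply (\ref{eq_k_holder}) with $\theta=1$:
\[
\|\phi(x)-\phi(x')\|_{\hcr_k}=\sup_{\|u\|_{\hcr_k}=1}|u(x)-u(x')|\le C\,d_X(x,x'),
\]
where $C$ is the embedding constant from Corollary \ref{cor_mor}. One small point needs care. If $x,x'$ lie in different connected components of $X$, then $d_X(x,x')=\infty$ and the inequality is trivial. Otherwise, the Lipschitz estimate of Corollary \ref{cor_mor} applies, since it bounds the inner-Lipschitz seminorm for pairs in the same component.

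\emph{Definiteness.} Let $x_1,\dots,x_l\in X$ be distinct and $c\in\R^l$ with $\sum_{i,j}c_ic_j\kbf(x_i,x_j)=0$. This sum equals $\|\sum_i c_i\phi(x_i)\|^2_{\hcr_k}$, so $\sum_i c_i\phi(x_i)=0$. Choose $g_j\in\cc_0^\infty(\R^n)$ with $g_j(x_i)=\delta_{ij}$; this is possible since the points are distinct, using bump functions with small disjoint supports.

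Since $\mu$ is finite, so is $\Phi_*\mu$. Every derivative of $g_j$ is bounded, hence lies in $L^2_{\Phi_*\mu}(\omd)$, so $g_j|_\omd\in\cc^{k,2}_{\Phi_*\mu}(\omd)\subset\hcr_k$. Pairing with $g_j$ then gives
\[
0=\Big\langle g_j,\sum_i c_i\phi(x_i)\Big\rangle_{\hcr_k}=\sum_i c_i g_j(x_i)=c_j
\]
for every $j$. So $c=0$ and the kernel is definite.

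\emph{Main obstacle.} The only delicate point is justifying that the pointwise evaluation $u(x)$ on $\hcr_k$, which is a completion modulo $\Phi_*\mu$-a.e.\ equality, is well defined at every point of $X=\supp_\omd\Phi_*\mu$. I would handle it with the uniform bound (\ref{eq_pf_linfty}): Cauchy sequences in $\hcr_k$ are uniformly Cauchy on $X$. Two functions agreeing $\Phi_*\mu$-a.e.\ and continuous on $X$ agree on the support, because the support is the closure of the set where the density is positive. So the extension is unambiguous.
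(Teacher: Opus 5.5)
Your proposal is correct and follows the paper's argument. It uses the Riesz representation of $\delta_x$, which is bounded on $\hcr_k$ by the estimate behind Corollary \ref{cor_mor}; Lipschitzness via (\ref{eq_k_holder}) with $\theta=1$; and definiteness by pairing with bump functions $g_j\in\cc_0^\infty(\R^n)$, $g_j(x_i)=\delta_{ij}$, which lie in $\hcr_k$ when $\mu$ is finite. Your extra care about extending point evaluation to the completion and about pairs in different connected components fills in details the paper leaves implicit.
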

%

\section{Proof of Proposition \ref{pro_dutoc}}\label{sect_tilda}
The proof of this proposition  heavily relies on Proposition \ref{pro_rtilda}, which is an improvement of Theorem \ref{thm_local_conic_structure} whose proof  will consist of adapting the proof of the latter theorem to the required extra properties. 

\subsection{Regular vectors.} 
Given a
definable set $A \subset \R^n$,    $A_{reg}$ stands  for the set of points at which $A$ is a  definable manifold (of any dimension).

\begin{dfn}\label{boule_reguliere}
	Let $A\in \s_n$. An element $\lambda$ of
	$\R^{n} $ is said to be {\bf regular  for  $A$}\index{regular! for a set}  if there is $\alpha >0$ such that for all $x\in A_{reg}$:
	$$
	d(\lambda,T_x A_{reg})) \geq \alpha.
	$$
	More generally, we say that $\lambda \in \R^{n} $ is {\bf regular for  $A\in \s_{m+n}$}  if there exists $\alpha >0$ such that for all  $t\in \R^m$ and  $x\in A_{t,reg}$:
	\begin{equation}\label{eq_reg_familles}
		d(\lambda, T_xA_{t,reg}) \geq \alpha.
	\end{equation}
	We then also say that $\lambda$ is {\bf regular for the family $(A_t)_{t \in \R^m}$}\index{regular! for a family}.
\end{dfn}

 It is not difficult to see that  $e_n$ (the last vector of the canonical basis of $\R^n$) is regular for a set $A\subset \R^n$ if and only if this set can be comprised in the union of the graphs of some definable Lipschitz functions $\xi_i:E_i\to \R$, $i=1,\dots,p$. The Lipschitz constants can then be bounded in terms of the above number $\alpha$ and $n$, which makes it possible to show this fact for definable families with uniform bounds \cite{livre}. Namely, if $e_n$ is regular for a family $(A_t)_{\tim}$ of subsets of $\R^n$ then there are uniformly Lipschitz definable families of functions  $\xi_{1,t} \le \dots\le \xi_{p,t}$ on $\R^{n-1}$ such that $A_t\subset \cup_{i=1}^p \Gamma_{\xi_{i,t}} $ for all $t$ (for more, see \cite{livre}, Corollary $3.1.21$ and Remark 3.1.22).
 
Regular vectors do not always exist, even if the considered sets have empty interior, as shown by the example of a circle.  We nevertheless have (see \cite[Theorem 1.3.2]{paramreg} for a proof):
\begin{thm}\label{thm_proj_reg_hom_pres_familles}
	Let $A\in \smn$ be such that $A_t$ has empty interior for every $t \in \R^m$. There exists a uniformly bi-Lipschitz 
	definable family of homeomorphisms  $h_t :  \R^n \rightarrow  \R^n$, $\tim$,  such that  $e_n$   is regular for the family $(h_t(A_t))_{t\in \R^m}$.
\end{thm}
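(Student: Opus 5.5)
The plan is to argue by induction on $n$, treating the parameter $t$ as passive throughout. The inductive step should only use constructions that come with definable families of homeomorphisms and uniform constants, so the family version comes for free.

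\textbf{Step 1: a finite set of candidate directions.} Since $A_t$ has empty interior, every tangent space $T_xA_{t,reg}$ is a proper subspace of $\R^n$. By compactness of the Grassmannian, I expect to find finitely many unit vectors $\lambda_1,\dots,\lambda_N$ and $\alpha>0$ with the following property: for every proper subspace $P$ there is an $i$ with $d(\lambda_i,P)\ge \alpha$. For instance, take a sufficiently dense finite net of the sphere, with $N$ depending only on $n$.

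\textbf{Step 2: splitting and the one-direction case.} I then split $A$ into finitely many definable pieces $A^1,\dots,A^N$, putting $x$ in $A^i_t$ when $d(\lambda_i,T_xA_{t,reg})\ge\alpha$; the lower-dimensional pieces are handled within the same stratification. By construction, $\lambda_i$ is regular for the family $(A^i_t)$. After a rotation bringing $\lambda_i$ to $e_n$, the discussion following Definition \ref{boule_reguliere} shows that each $A^i_t$ lies in finitely many graphs of uniformly Lipschitz definable families $\xi_{1,t}\le\dots\le\xi_{p,t}$ on $\R^{n-1}$. This settles the case $N=1$.

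\textbf{Step 3: gluing by induction on the number of pieces.} Suppose $h_t$ is uniformly bi-Lipschitz and $e_n$ is regular for $h_t(A^1_t\cup\dots\cup A^{j}_t)$. The images $h_t(A^{j+1}_t)$ remain sets with empty interior. So I need to modify $h_t$ by a further uniformly bi-Lipschitz family that keeps the already-achieved graphs Lipschitz while straightening $h_t(A^{j+1}_t)$.

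The natural attempt is a map preserving the fibration by vertical lines, $(x',y)\mapsto (g_t(x'),\, y+\psi_t(x',y))$, where:
\begin{itemize}
\item $g_t$ comes from the induction hypothesis in $\R^{n-1}$, applied to the projection of the set where the new piece is ``vertical'', which has empty interior in $\R^{n-1}$;
\item $\psi_t$ is built from interpolation between the Lipschitz graphs $\xi_{i,t}$ in the bands $[\xi_{i,t},\xi_{i+1,t}]$.
\end{itemize}
This requires a Lipschitz cell-decomposition (``L-regular'') preparation of the whole family. Its cells come with uniform bounds, via \cite[Corollary 3.2.12]{livre}-type bi-Lipschitz triviality and \L ojasiewicz's inequality (Proposition \ref{pro_lojasiewicz_inequality}) to control constants.

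\textbf{Main obstacle.} The hard step is this gluing: ensuring that the new homeomorphism does not destroy Lipschitzness of the previously straightened graphs, and that all bi-Lipschitz constants stay uniform in $t$. I would first try to reduce to the case where the bad piece projects onto a nowhere dense set of $\R^{n-1}$. There the induction hypothesis in dimension $n-1$ applies, and the vertical interpolation between consecutive graphs is controlled by their uniform Lipschitz constants.
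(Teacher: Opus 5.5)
The paper does not prove this statement; it quotes it from \cite[Theorem 1.3.2]{paramreg}. Your Steps 1--2 are correct but routine: a finite partition of $A$ into pieces $A^i$ with regular vectors $\lambda_i$ only settles the one-direction case. The whole content of the theorem is the gluing in Step 3. There you give a plan rather than an argument, and the plan as stated cannot work.

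A map $\Psi_t(x',y)=(g_t(x'),\,y+\psi_t(x',y))$ sends every vertical line into a vertical line. So vertical segments of $h_t(A^{j+1}_t)$ remain vertical, and straightening the projection of the vertical locus in $\R^{n-1}$ by the induction hypothesis does not change that. Nor can such maps undo folds. Take $n=2$, $\Gamma=\{x_1=x_2^2,\ 0\le x_2\le 1\}$, and let $\Psi(x_1,x_2)=(g(x_1),\,x_2+\psi(x_1,x_2))$ be $L$-bi-Lipschitz.
\begin{itemize}
\item Restricting $\Psi$ to the line $\{x_1=0\}$ gives $|u+\psi(0,u)-\psi(0,0)|\ge u/L$.
\item Suppose $\Psi(\Gamma)$ lies in finitely many Lipschitz graphs. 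Along a sequence $u\to 0$ the points $\Psi(u^2,u)$ then lie on one $K$-Lipschitz graph, which by continuity passes through $\Psi(0,0)$. This bounds the same quantity by $(K+1)Lu^2$, a contradiction.
\end{itemize}
For the unit circle, whatever direction you make vertical, there are two such folds. A composition of maps of your form is again of that form. Hence your scheme (a rotation followed by fibre-preserving corrections) already fails for $A=\{|x|=1\}\subset\R^2$, however the pieces are chosen or ordered.

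Inducting on the number of pieces has a further defect. Once a non-smooth bi-Lipschitz $h_t$ has been applied, $h_t(A^{j+1}_t)$ need no longer have $\lambda_{j+1}$, or any vector, as a regular vector. Every gluing stage is then the original problem for an arbitrary set with empty interior, plus the constraint of preserving the graphs already obtained, so Step 2 gives you nothing there.

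What is missing is a correction that moves points transversally to $e_n$, while controlling the previously straightened graphs and keeping the constants uniform in $t$. In the example above, take the shear $(x_1,x_2)\mapsto (x_1+\varepsilon f(x_2),x_2)$, with $f(x_2)=|x_2|-x_2^2$ on $[-1,1]$ and $f=0$ elsewhere. It moves points along the regular direction $e_1$ of the parabola $\{x_1=x_2^2,\ |x_2|\le 1\}$. It maps that parabola into two $\frac{1}{\varepsilon}$-Lipschitz graphs over the $x_1$-axis, and it keeps every $K$-Lipschitz graph Lipschitz as long as $\varepsilon K<1$. Carrying out such transversal corrections for all pieces at once, in every dimension and uniformly in $t$, is precisely the nontrivial content of the quoted theorem, and your proposal does not supply it.
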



 We will need the following fact which was proved in \cite[Lemma $3.2.5$]{livre}:
\begin{lem}\label{lem_eq_dist_cor_th_prep}
	Let $\xi :  A \rightarrow \R$ be a definable nonnegative function, $A\in \s_{m+n}$.
	There exist some definable subsets $W_1,\dots,W_l$ of $\adh A$  and a   finite partition $\Pa$ of $A$ into definable sets such that for any $V \in \Pa$ there are some rational numbers  $\alpha_1,\dots,\alpha_l$ and a positive definable function $c(t)$ such that  we have for $(t,x)\in V  \subset \R^{m+n}$:  \begin{equation}\label{eq_equiv_dist}\xi_t(x) \sim c(t)d(x,W_{1,t})^{\alpha_1} \cdots d(x,W_{l,t})^{\alpha_l}   .
	\end{equation}
\end{lem}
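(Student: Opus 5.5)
The plan is to argue by induction on $n$, treating $t$ as a parameter throughout. The main tool is a subanalytic preparation theorem for definable families of functions (in the Parusi\'nski / Lion--Rolin form, as in \cite{livre}). It will be combined with Theorem \ref{thm_proj_reg_hom_pres_familles}, whose role is to make the centers of preparation Lipschitz. Two remarks justify this. First, uniformly bi-Lipschitz families $h_t$ change distances $d(x,W_t)$ only up to multiplicative constants. Second, if $W_{j,t}\subset \adh{A_t}$ then $h_t(W_{j,t})\subset\adh{h_t(A_t)}$. Hence it suffices to prove the statement after replacing $A_t$ by $h_t(A_t)$ and $\xi_t$ by $\xi_t\circ h_t^{-1}$. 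The case $n=0$ is trivial: $\xi$ is then a function of $t$ only, and one takes $c(t)=\xi(t)$ on $\{\xi>0\}$ and puts $\{\xi=0\}$ in a separate piece. On that piece one uses the convention $W:=\adh A$, so that $d(x,W_t)=0$ there.

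For the inductive step, consider the definable family $Z_t\subset\R^n$ made of $\adh{A_t}\setminus A_t$, the boundary of $\{\xi_t=0\}$, and the non-continuity locus of $\xi_t$. More generally $Z_t$ should contain every locus along which the preparation theorem would have to put a center, all of these having empty interior. By Theorem \ref{thm_proj_reg_hom_pres_familles} we may assume $e_n$ is regular for $(Z_t)$. We then cover $Z_t$ by the graphs of uniformly Lipschitz definable families $\theta_{1,t}\le\dots\le\theta_{p,t}$ defined on $\R^{n-1}$. These graphs cut $A$ into pieces that are either contained in a graph $\Gamma_{\theta_{i,t}}$, or lie in bands $\{\theta_{i,t}(x')<x_n<\theta_{i+1,t}(x')\}$ over some definable base $B\subset\R^{m+n-1}$. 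On a band, the preparation theorem (after refining the base) should give
$$\xi_t(x',x_n)\sim a(t,x')\,|x_n-\theta_{i,t}(x')|^{r}, \qquad r\in\Q ,$$
with center $\theta_{i,t}$ one of the two bounding functions (possibly after splitting the band at the midpoint $\frac{\theta_i+\theta_{i+1}}{2}$, which is still Lipschitz). Pieces contained in a graph are handled by applying the induction hypothesis through the Lipschitz parametrization $x'\mapsto(x',\theta_{i,t}(x'))$, which is bi-Lipschitz onto the graph.

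Since $\theta_{i,t}$ is $L$-Lipschitz, we have $|x_n-\theta_{i,t}(x')|\sim d(x,\Gamma_{\theta_{i,t}})$, so this factor becomes a power of $d(x,W_t)$ with $W_t:=\Gamma_{\theta_{i,t}}\cap\adh{V_t}$. Here $V$ denotes the piece, and the set $W_t$ is admissible because the band is adjacent to that graph. Next we apply the induction hypothesis to $a$ on $B$. On each piece of a partition of $B$ this gives $a(t,x')\sim c(t)\prod_j d(x',W'_{j,t})^{\alpha_j}$ with $W'_{j,t}\subset\adh{B_t}$. We lift each $W'_j$ to $W_j:=(W'_j\times\R)\cap\adh V$ and must check that $d(x',W'_{j,t})\sim d(x,W_{j,t})$ for $x\in V_t$. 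The inequality $\le$ is clear. For the other one, take $w\in W'_{j,t}$ nearly closest to $x'$. Because the band is bounded by $L$-Lipschitz graphs, the fiber of $\adh{V_t}$ over $w$ contains a point within vertical distance $C|x'-w|$ of the level $x_n$, either inside the band or on its boundary; unbounded bands are handled in the same way. This proves the lift and gives \eqref{eq_equiv_dist}. The finitely many $W$'s collected over all pieces then serve as $W_1,\dots,W_l$, with exponent $0$ where a set is not used.

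I expect the main obstacle to be the compatibility between the bi-Lipschitz change of coordinates and the preparation. The preparation theorem produces centers that depend on the coordinates, so it is not clear a priori that, after applying $h_t$, the centers can be taken among Lipschitz graphs covering a set fixed beforehand. My first attempt would be to use the version of preparation in which the centers may be chosen among the cell boundary functions, once $\xi$ is continuous and nonvanishing on the open cells. In that setting it is enough to make $e_n$ regular for the cell walls of a cylindrical decomposition adapted to $\xi$. I would re-decompose after applying $h_t$ and check that the new walls can be enlarged into graphs covered by Lipschitz functions, using regularity of $e_n$. If this loops, the fallback is to invoke the Lipschitz cell decomposition of \cite[Chapter 3]{livre}, which builds regular-vector changes of coordinates into the cell construction itself.
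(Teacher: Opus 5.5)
The paper gives no argument for this lemma. It imports \cite[Lemma 3.2.5]{livre}, remarking only that the proof there also produces the factor $c(t)$. So your outline has to stand on its own, and it does not yet. Several parts are fine: invariance under uniformly bi-Lipschitz families, the case $n=0$, pieces inside Lipschitz graphs, and lifting $d(x',W'_{j,t})$ across a band bounded by $L$-Lipschitz graphs. Two small repairs are needed. First, $Z_t$ must contain $fr(A_t)$, not only $\adh{A_t}\setminus A_t$, so that each band lies in $A_t$ or misses $\adh{A_t}$. Second, if a band is later cut, lift the $W'_j$ through the closure of the whole band, not of the piece. The real gap is the step that carries all the content: that on a band $\theta_{i,t}<x_n<\theta_{i+1,t}$ the preparation theorem gives $\xi_t\sim a(t,x')|x_n-\theta_{i,t}(x')|^r$ with a \emph{bounding} function as center. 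With the loci you actually list, this is false. Take $m=0$, $A=\R\times(0,1)$ and $\xi(x_1,x_2)=(x_2-\sqrt{|x_1|})^2+x_1^2$. Then $\xi$ is continuous and positive on $A$, and $Z=\{x_2=0\}\cup\{x_2=1\}$ is already a union of horizontal lines. For small $x_1>0$, the function $x_2\mapsto\xi(x_1,x_2)$ on $(0,\frac12)$ drops from about $x_1$ to $x_1^2$ at $x_2=\sqrt{x_1}$ and climbs back to about $\frac14$. A function $\sim$ to one that is monotone in $x_2$ cannot have such a dip. Hence neither $a(x_1)x_2^r$ nor $a(x_1)|x_2-1|^r$ works, however the base is refined. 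The center actually needed is the valley $\{x_2=\sqrt{|x_1|}\}$. Along it $\xi$ neither vanishes nor jumps, and it is not Lipschitz.

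The cell walls are not the problem. The partition in the statement is arbitrary, so cutting a band along non-Lipschitz graphs costs nothing. Centers lying outside the band are also harmless: compare $|x_n-\theta_{i,t}|$ with $|\theta_{i,t}-\vartheta|$ and split. What must be controlled are centers lying inside a band, and these depend on the coordinates. If you add them to $Z_t$ and straighten them with Theorem \ref{thm_proj_reg_hom_pres_familles}, you must re-prepare $\xi_t\circ h_t^{-1}$, which produces new interior centers. This is the loop you foresee, and nothing makes it terminate. The paper's logic runs the other way: Lemma \ref{prop proj reg} obtains Lipschitz centers \emph{from} Lemma \ref{lem_eq_dist_cor_th_prep}, because the latter is coordinate-free. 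So that route is unavailable here. You need a separate input that guarantees regular centers. One option is a preparation theorem with regular projections in the spirit of Parusi\'nski: carry the projection direction as an extra parameter, then select finitely many directions by a dimension count. The other is the Lipschitz machinery of \cite[Chapter 3]{livre}. Your stated fallback is the latter, which amounts to citing the result, as the paper does.
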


We stress the fact that in the above lemma the function $c$ and the $\alpha_i$ depend on $V$. The constant of the equivalence, as well as the $\alpha_i$, are however independent of $t$. The above statement is slightly different from \cite[Lemma $3.2.5$]{livre} (the function $c$ has been added). This is  however exactly what is actually proved.

Given any couple of functions $\zeta$ and $\xi$ on a set $A \subset \R^n$ with $\xi \leq \zeta$, we define the {\bf closed band $[\xi,\zeta]$} as the set:
\begin{equation}\label{eq_intervals}[\xi,\zeta]:=\{(x,y)\in A \times \R: \xi(x)\leq y \leq \zeta(x)\}.\end{equation}
The open band $(\xi,\zeta)$ is defined analogously.
\begin{lem}\label{prop proj reg}
	Let  $A_{1},\dots,A_{\kappa} $ be definable subsets of $[-1,1]^{m+n}$, $n\ge 1$, and 
	$\eta_j:A_j\to [0,\infty)$, $j\le \kappa,$  be   definable functions. There exist a   uniformly bi-Lipschitz definable
	family of
	homeomorphisms (onto their images)  $$H:[-1,1]^{m+n} \to [-1,1]^{m+n}, \qquad (t,x) \mapsto (t,H_t(x)),$$  and a cell decomposition $\mathcal{D}$ of $\R^{m+n} $compatible with  $H(A_{1}),\dots, H(A_{\kappa})$  such
	that:
	\begin{enumerate}[(i)]
		\item\label{item_graphe} If $D\in \D$ is
		 a graph over a cell of $\R^{m+n-1}$, say $D=\Gamma_\xi$, then $(\xi_t)_\tim$ is  uniformly  Lipschitz.
		 
		\item\label{item_eq} For each	$j\le \kappa$ and   each $D\in \D$  included in $H (A_{j})$,     $\eta_{j}\circ H^{-1}(t,x)$ is $\sim$ on  $D$
		to a function of the form:
		\begin{equation}\label{eq prep}
			|x_n-\theta(t,\xt)|^\alpha a(t,\xt),\qquad x=(t,\xt,x_n) \in   D\subset \R^{m}\times \R^{n-1}\times\R,\end{equation}
		with
		$a$ and $\theta$   definable functions, $\Gamma_{\theta}\in \D$, and $\alpha \in \Q$.
	\end{enumerate}
\end{lem}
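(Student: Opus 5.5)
We combine three earlier results: Theorem~\ref{thm_proj_reg_hom_pres_familles} (to make $e_{m+n}$, or rather $e_n$ in the fibers, regular), Lemma~\ref{lem_eq_dist_cor_th_prep} (to make the $\eta_j$ equivalent to monomials in distances), and a cell decomposition argument that replaces distances by $|x_n-\theta|$ on cells. We argue by induction on $n$, as in the proof of the preparation theorem in \cite{livre}.

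First, let $Z\subset[-1,1]^{m+n}$ be a definable set whose fibers $Z_t$ have empty interior. We choose $Z$ to contain the frontiers (in the fibers) of the $A_{j,t}$. We also include in $Z$ the sets $W_{i,t}$ supplied by Lemma~\ref{lem_eq_dist_cor_th_prep} applied to each $\eta_j$; these may be replaced by their fiberwise frontiers if they have interior. Finally, $Z$ contains the boundaries of the partitions $\Pa$. Theorem~\ref{thm_proj_reg_hom_pres_familles} then gives a uniformly bi-Lipschitz family $h_t$ such that $e_n$ is regular for $(h_t(Z_t))_t$. By the remark following Definition~\ref{boule_reguliere}, $h_t(Z_t)$ is contained in the union of graphs of uniformly Lipschitz definable functions $\xi_{1,t}\le\dots\le\xi_{p,t}$ of $(t,\tilde x)$. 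Bi-Lipschitz maps preserve $\sim$ on distances, so on each piece $V$ we get
\[
\eta_j\circ h^{-1}(t,x)\sim c(t)\prod_i d(x,h_t(W_{i,t}))^{\alpha_i}.
\]

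Second, we reduce each distance to a vertical distance. Because $h_t(W_{i,t})$ lies in $\bigcup_k\Gamma_{\xi_{k,t}}$ with uniformly Lipschitz $\xi_k$, we have $d(x,\Gamma_{\xi_{k,t}})\sim|x_n-\xi_{k,t}(\tilde x)|$. On each band $(\xi_k,\xi_{k+1})$ and each graph, this gives the comparison
\[
d(x,h_t(W_{i,t}))\sim \min_k\bigl(|x_n-\xi_k|+d((t,\tilde x),\pi(\ldots))\bigr),
\]
a min-type expression in $|x_n-\xi_k(t,\tilde x)|$ and a function of $(t,\tilde x)$. Subdividing the bands by the graphs of $(\xi_k+\xi_{k+1})/2$, which are still uniformly Lipschitz, ensures that on each subband only the nearest boundary graph $\theta=\xi_k$ matters. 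Then $|x_n-\xi_l|\sim|\xi_l-\xi_k|$ for $l\neq k$, and this quantity depends only on $(t,\tilde x)$. Each factor thus becomes $\sim |x_n-\theta|^{\beta}$ times a function of $(t,\tilde x)$, after a further partition according to which term of a max or min realizes it. The result is of the form~(\ref{eq prep}), with $a(t,\tilde x)$ collecting $c(t)$ and the $(t,\tilde x)$-factors. On a graph cell we take $\alpha=0$.

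Third, we apply the induction hypothesis in dimension $n-1$, treating $t$ as a parameter, to the functions $a$, to the differences $\xi_{k+1}-\xi_k$, and to the projections of the cells. This yields a bi-Lipschitz $H'_t$ on $\R^{n-1}$ and a cell decomposition on which the base cells have uniformly Lipschitz graphs. We lift it as $(\tilde x,x_n)\mapsto(H'_t(\tilde x),x_n)$ and compose with $h$. The composition stays uniformly bi-Lipschitz, and the graphs $\xi_k\circ H'^{-1}$ stay uniformly Lipschitz. The case $n=1$ is the base of the induction: it is Lemma~\ref{lem_eq_dist_cor_th_prep} with $t$ as parameter, since there distances to finite sets are $|x_1-\theta|$.

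The main obstacle is the second step. The distance to $h_t(W_{i,t})$ is not literally a distance to a full graph, because $W$ may lie only over part of the base. We therefore have to control it by $|x_n-\xi_k|$ plus a horizontal distance, and then absorb the horizontal part. To do this we first try to add $\pi(h_t(W_{i,t}))$ and its frontier to the sets handled by the induction, so that on each cell either the relevant graph piece lies over the whole base cell or the horizontal term dominates and is a function of $(t,\tilde x)$.
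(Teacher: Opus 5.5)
Your skeleton matches the paper's: Lemma~\ref{lem_eq_dist_cor_th_prep}, then Theorem~\ref{thm_proj_reg_hom_pres_familles}, then $d(x,S\cap\Gamma_\xi)\sim|x_n-\xi(\tilde x)|+d(\tilde x,\pi(S\cap\Gamma_\xi))$, then a nearest-graph argument. However, your second step, which carries the content of the lemma, has a genuine gap.

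The midpoint subdivision does not give $|x_n-\xi_l|\sim|\xi_l-\xi_k|$ for all $l\neq k$. On $\{\xi_k\le x_n\le \frac12(\xi_k+\xi_{k+1})\}$ this holds for $l>k$. For $l<k$, however, $|x_n-\xi_l|=|x_n-\xi_k|+(\xi_k-\xi_l)$, which is not comparable to $\xi_k-\xi_l$ in general: take $\xi_{k-1}=-|\tilde x|$, $\xi_k=0$, $\xi_{k+1}=1$, $x_n=\frac14$, and let $\tilde x\to 0$.

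Likewise, refining the base by $\pi(h_t(W_{i,t}))$ cannot settle the comparison between $|x_n-\xi_k|$ and the horizontal term $g_k(t,\tilde x):=d(\tilde x,\pi(h_t(W_{i,t})\cap\Gamma_{\xi_{k,t}}))$. Over a base cell disjoint from the closure of that projection, $g_k>0$, while in every fiber $|x_n-\xi_k|$ runs from $0$ to $\frac12(\xi_{k+1}-\xi_k)$ on the lower half-band. So the comparison switches inside the band wherever $0<g_k<\frac12(\xi_{k+1}-\xi_k)$, and no partition of the base can exclude this. Your ``further partition according to which term of a max or min realizes it'' is therefore unavoidable. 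It is also exactly where (i) can be lost: a cell decomposition merely adapted to these comparisons has, in general, graph cells with no uniform Lipschitz bound.

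The missing idea is that every comparison locus you need lies in graphs of uniformly Lipschitz families.
\begin{itemize}
\item For any uniformly Lipschitz $g\ge 0$, $\{|x_n-\xi_k|=g\}\subset\Gamma_{\xi_k+g}\cup\Gamma_{\xi_k-g}$. This applies to $g=g_k$, which is $1$-Lipschitz in $\tilde x$, and to $g=\xi_k-\xi_l$, which gives $\Gamma_{2\xi_k-\xi_l}$.
\item The locus $\{|x_n-\xi_k|=|x_n-\xi_l|\}$ lies in $\Gamma_{(\xi_k+\xi_l)/2}$.
\end{itemize}
Add all these graphs and re-sort them; pointwise order statistics of uniformly Lipschitz families are uniformly Lipschitz. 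After refining the base, the finite collection $|x_n-\xi_k|$, $g_k$, $\xi_k-\xi_l$ is totally ordered on every band, and (i) is preserved. This is what the paper imports from Lemma 3.2.7 of \cite{livre}.

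Then, if $\xi_{\check k}$ is the nearest of the $\xi_l$ on a band, $|x_n-\xi_l|\sim\max(|x_n-\xi_{\check k}|,|\xi_l-\xi_{\check k}|)$ for every $l$. The total order then turns each factor $d(x,h_t(W_{i,t}))\sim\min_k\max(|x_n-\xi_k|,g_k)$ into either $|x_n-\xi_{\check k}|$ or a function of $(t,\tilde x)$. This gives (\ref{eq prep}) with $\theta=\xi_{\check k}$.

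Two smaller points:
\begin{itemize}
\item Replacing $W_i$ by its frontier needs a preliminary refinement making each $W_i$ a union of pieces of the partition. Then on each piece $d(x,W_{i,t})$ is either $\equiv 0$ or equal to $d(x,\mathrm{fr}(W_{i,t}))$.
\item Your third step (induction on $n$ and straightening the base) is unnecessary. Condition (i) only constrains the last coordinate of graph cells, and (ii) imposes nothing on $a$.
\end{itemize}
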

\begin{proof}
	Apply Lemma \ref{lem_eq_dist_cor_th_prep}  to each  $\eta_j$, $j=1,\dots,\kappa$, and take a common refinement of the obtained partitions. This provides a finite partition
	$V_1,\dots,V_b$   of $\R^{m+n}$ together with some definable subsets $W_1,\dots,W_l$ of
	$\R^{m+n}$,  such that for each $j$ the function  $\eta_j$ is, on each $V_i$ comprised in $A_j$, as displayed in  (\ref{eq_equiv_dist}).
	
	Possibly  refining
	the partition $V_1,\dots,V_b$, we may assume that the $W_{k}$ are unions of elements of this partition. The function $(t,x)\mapsto d(x,W_{k,t})$ is then on  $V_i$ for each $k$ and $i$ either identically $0$ or equal to  $d(x,fr (W_{k,t}))$. We therefore can suppose that  $d(x,fr( W_{k,t}))\equiv d(x, W_{k,t})$ on the $V_i$ (if $\eta_j\equiv 0$ on some
	$V_i$ then $(\ref{item_eq})$ is trivial on this set for this $j$).
	
	Apply now Theorem  \ref{thm_proj_reg_hom_pres_familles} to the union of the  $fr( A_{i,t})$,
	the $fr( V_{i,t})$, and  the $fr( W_{i,t})$ (which are all families of sets of empty interior). This provides a uniformly bi-Lipschitz definable family of homeomorphisms $H_t: \R^n \to \R^n,\tim,$ such that $e_n$ is regular for the respective images of these families under $H_t$.  It means that
	these sets are sent by $H_t$ (for each $\tim$)
	into the union of the graphs of some uniformly Lipschitz definable  families of functions $\theta_{1,t}\leq \dots \leq \theta_{d,t} $ defined on $\R^{n-1}$ (see the paragraph after Definition \ref{boule_reguliere}).

	Up to a family of translations, we may suppose $H_t(\orn)=\orn$ for all $\tim $, which means that  $H_t([-1,1]^n)$ is bounded independently of $t$, and hence, up to a constant family of homothetic transformations, can be assumed to be included in $[-1,1]^{n}$ for each $t$.

	Let $\pi:\R^i\to \R^{i-1}$ denote the   projection omitting the last coordinate (for all $i$).
It follows from Lemma $3.2.7$ (and Remark 3.2.8) of \cite{livre} that there are uniformly Lipschitz definable families of
	functions $\xi_{1,t}\leq\dots\leq \xi_{p,t}$ and a cell decomposition of $\R^{m+n-1}$, say $\E$,  such that for every $E \in \E$ and over each
	$[\xi_{i,t|E_t},\xi_{i+1,t|E_t}]$, $i<p$,  the collection of functions
	$$ |x_n-\theta_{\nu,t}(\xt)|, \; d(\xt,\pi(H_t( fr( W_{k,t}))\cap
	\Gamma_{\theta_{\nu,t}})),\;( \theta_{\nu,t} -\theta_{\nu',t})(\xt), \quad  \nu ' < \nu \leq d,\; k\le l,$$
    $(t,\xt,x_n)\in [\xi_{i|E},\xi_{i+1|E}]\subset \R^m\times \R^{n-1}\times \R$,  is totally ordered (i.e., given $f$ and $g$ in this collection we have either $f\le g$ or $g\le f$). Adding some graphs if necessary, we can assume that $\cup_{i=1}^p \Gamma_{\xi_{i,t}} \supset \cup_{i=1}^d \Gamma_{\theta_{i,t}} $. Moreover, refining $\E$ if necessary, we can also assume  it to be compatible with the all the sets $\pi(E)$, $E\in \F$, where $\F$ is a cell decomposition of $\R^{m+n}$ compatible with the $H(A_i)$ and the $\Gamma_{\xi_i}$.
	
	If for some $i< p$, $\tim$, and $E\in \E$, $(\xi_{i,t|E_t},\xi_{i+1,t|E_t})$ meets  $H_t (V_{\iota,t})$ for some $\iota\le b$,  then, since it does not meet $$fr(H_t (V_{\iota,t}))=H_t (fr(V_{\iota,t})) \subset \cup_{j=1}^p \Gamma_{\xi_{j,t}},$$ it must entirely fit in  $ H_t (V_{\iota,t})$. Since $V_1,\dots, V_b$ is a partition, we derive that there must be a unique $\iota=\iota(i,t)$ such that $(\xi_{i,t|E_t},\xi_{i+1,t|E_t})\subset H_t (V_{\iota,t})$. Let $\E'$ be a refinement of $\E$ such that $\iota(i,t)$ is for each $i$ constant on every cell of $\E'$. Analogously, there is a refinement $\E''$ of $\E'$ such that each for each $E\in \E''$ each $(\xi_{i|E},\xi_{i+1|E})$ is either disjoint from $H(A_j)$ or included in it (for each $j$).

	The respective graphs of the
	restrictions of the $\xi_i$ to the cells of
	$\E''$ thus induce a cell decomposition of $\R^{m+n}$
	compatible with the $H(A_i)$ that we will denote by $\D$.
	Since the $\xi_{i,t}$ are uniformly
	Lipschitz families functions, we already see that  $(\ref{item_graphe})$  holds.

	To prove $(\ref{item_eq})$, fix a cell $D$ of $\D$ included in some $H(A_j)$, $j\le \kappa$. If this cell is a graph, (\ref{eq prep}) is obvious. Otherwise, since $H^{-1}_t(D_t)$ is included in $V_{\iota,t}$ for some $\iota$ independent of $t$ (by definition of $\E'$), we know that  $\eta_{j,t}(x) $ is   on $H^{-1}(D)$  as displayed in  (\ref{eq_equiv_dist}). As $H_t$ is uniformly bi-Lipschitz, this entails
	that $\eta_{j,t}\circ H_t^{-1}(x)$ is on $D$, up to a product by $c(t)$, $\sim$ to a product of powers of the functions   $(t,x) \mapsto d(x,H_t(fr( W_{k,t})))$, $k\in \{1,\dots,l\}$. It is thus enough to check that each function  $(t,x) \mapsto d(x,H_t(fr( W_{k,t})))$ admits an estimate like displayed in (\ref{eq prep}), for some function $\theta$ independent of $k$.
	
	Fix $k \le l$. As the  $\theta_{\nu,t}$
	are uniformly Lipschitz families of functions, we have for any $\nu \in \{1,\dots,d\}$	for $(t,x)=(t,\xt,x_n) \in  \R^m \times  \R^{n-1} \times \R$:
	\begin{equation}\label{eq fin triang}
		d(x, H_t(fr( W_{k,t})) \cap \Gamma_{\theta_{\nu,t}} ) \eqr |x_n
		-\theta_{\nu,t}(\xt)|+d(\xt,\pi(H_t( fr( W_{k,t})) \cap \Gamma_{\theta_{\nu,t}} )).
	\end{equation}
	The terms of the  right-hand-side being nonnegative and comparable with
	each other (for partial order relation $\leq$) over the cell $D$
	(by choice of the
	$\xi_i$),
	the left-hand-side is  $\sim$   to the biggest one on $D$.
		Note also that, as $H_t(fr( W_{k,t}))\subset \cup_{\nu=1}^d\Gamma_{\theta_{\nu,t}}$, we have:
	$$ d(x, H_t(fr( W_{k,t})))
	=\underset{1 \leq \nu\leq d}{\min}
	d(x,H_t( fr( W_{k,t}) )\cap \Gamma_{\theta_{\nu,t}} ).$$
	Hence, by (\ref{eq fin triang}),  $d(x,H_t(fr( W_{k,t})))$
	is  over $D$ either
	$\sim$ to one of the functions $(t,\xt) \mapsto  d(\xt,\pi(H_t( fr( W_{k,t})) \cap \Gamma_{\theta_{\nu,t}} )) $ (which are independent of $x_n$)  or to
	$(t,\xt,x_n)\mapsto  |x_n -\theta_{\nu,t}(\xt)|$, for some $\nu\in\{1,\dots,d\}$. It thus only remains to check that the same $ |x_n-\theta_{\nu,t}(\xt)|$ can be chosen for all $k$.	Recall for this purpose that  the finite family constituted by the functions  $ |x_n-\theta_{\nu,t}(\xt)| , \,  \nu \leq d$, together with  the  functions $(\theta_{\nu,t}-\theta_{\nu',t})$,  $\nu' < \nu \leq d$, is totally ordered on each cell. There  thus must be $\check \nu$  such that on $D$
	 $$ |x_n-\theta_{\check\nu,t}(\xt)|=\min_{\nu} |x_n-\theta_{\nu,t}(\xt)|.$$
  Write then
	\begin{equation*}
		x_n - \theta_{\nu,t} = (\theta_{\check\nu,t} - \theta_{\nu,t})(1+\dfrac{x_n - \theta_{\check\nu,t}}{\theta_{\check\nu,t}- \theta_{\nu,t}})
	\end{equation*}
if $|x_n - \theta_{\check\nu,t}| \leq |\theta_{\check\nu,t} - \theta_{\nu,t}|$ on $D$, as well as
	\begin{equation*}
		x_n - \theta_{\nu,t}= (x_n - \theta_{\check\nu,t})(1 + \dfrac{\theta_{\check\nu,t}- \theta_{\nu,t}}{x_n -\theta_{\check\nu,t}})
	\end{equation*} 
in the case where $|x_n - \theta_{\check\nu,t}| \geq |\theta_{\check\nu,t}- \theta_{\nu,t}|$ on $D$. It  easily follows from these two equalities that  for each $\nu\in \{ 1,\dots,d\}$,  $|x_n-\theta_{\nu,t}(\xt)|$ is either $\sim$ on $D$ to $|x_n-\theta_{\check\nu,t}(\xt)|$ or to a  function independent of $x_n$ (see the proof of \cite[Lemma 1.6.7]{livre} for fully explicit computations). 
%
\end{proof}
We now come to the needed fact. We assume for simplicity that the $Z_{j,y}$ are subsets of $[-1,1]^n$ but this is true for uniformly bounded families, since we can apply a homothetic transformation.
\begin{pro} \label{pro_rtilda}
	Let  $X_1,\dots,X_l\in \s_m$ and $\xo\in \cap_{i=1}^l X_i$. Let $(Z_{j,y})_{y\in  \R^m}$, $j=1,\dots,\kappa$, be   definable families of subsets of $[-1,1]^n$  and $\zeta_{j,y}:Z_{j,y}\to [0,\infty)$  uniformly bounded definable families of functions. There are a Lipschitz conic structure retraction $$r:[0,1]\times \Bb(\xo,\ep) \to  \Bb(\xo,\ep)$$ of $X_1,\dots,X_l$ at $\xo$ and a   definable family of homeomorphisms $$\tilde{r}_{s,y}:[-1,1]^n \to [-1,1]^n, \qquad s\in (0,1],\; y\in \Bb(\xo,\ep),$$  such that for all  $j,s$ and $y$
	\begin{enumerate}[(i)]
			\item\label{item_zj} $\tilde{r}_{s,y}(Z_{j,y})= Z_{j,r_s(y)}$ and $\tilde r_{1,y}=Id$.
			\item  $\tilde{r}_{s,y}$ is $C$-Lipschitz and its inverse is $Cs^{-\nu}$-Lipschitz, with $C$ and $\nu$ positive numbers independent of $s$ and $y$, and  for  $x\in Z_{j,y}$
			  \begin{equation}\label{eq_zeta_dutoc}
			  	\frac{s^\nu}{C}\,\zeta_{j,y}(x)\le\; 	\zeta_{j,r_s(y)}(\tilde{r}_{s,y}(x))\;
			  	\le C\,  \zeta_{j,y}(x).
			  \end{equation}
	\end{enumerate}

\end{pro}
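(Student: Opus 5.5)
The plan is to adapt the inductive proof of Theorem \ref{thm_local_conic_structure} given in \cite{livre}, carrying along the total space of the families. Consider the definable set
$$\mathcal Z_j:=\{(y,x)\in \R^m\times[-1,1]^n : x\in Z_{j,y}\}\subset \R^{m+n},$$
together with the function $(y,x)\mapsto \zeta_{j,y}(x)$ on $\mathcal Z_j$. I also consider the cylinders $X_i\times[-1,1]^n$. The idea is to build a Lipschitz conic structure retraction of the sets $\mathcal Z_j$ over the base $\R^m$ that is \emph{fibered}: it has the form $(s,y,x)\mapsto (r_s(y),\tilde r_{s,y}(x))$, with $r_s$ a Lipschitz conic structure retraction of $X_1,\dots,X_l$ at $\xo$. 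Property (\ref{item_zj}) will then be automatic, because the total map preserves each $\mathcal Z_j$. The estimate (\ref{eq_zeta_dutoc}) is the analogue, for the fibered setting, of the remark \cite[Remark 3.4.4]{livre} that conic retractions may be chosen so that definable functions do not increase along them.

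First I would reduce to the fiber variable. Apply Lemma \ref{prop proj reg} to the sets $\mathcal Z_j$ (viewed as families parametrized by $y\in\R^m$) and to the functions $\eta_j:=\zeta_j$. This gives a uniformly bi-Lipschitz family $H_y$ and a cell decomposition in which:
\begin{itemize}
\item each $H_y(Z_{j,y})$ is a union of cells bounded by graphs of uniformly Lipschitz functions $\xi_{i,y}$;
\item on each cell, $\zeta_{j}\circ H^{-1}$ is $\sim |x_n-\theta(y,\xt)|^\alpha a(y,\xt)$.
\end{itemize}
Since $H_y$ is uniformly bi-Lipschitz, conjugating by it costs only constants. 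It therefore suffices to treat the images.

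Next I would run an induction on $n$. The cells of the decomposition of $\R^m\times\R^{n-1}$, and the finitely many definable functions $\xi_i$, $\theta$, $a$, $\xi_{i+1}-\xi_i$ on them, form a new collection of definable families over $y$ in one lower fiber dimension. By induction (the base case $n=0$ is the conic structure theorem with function control), I obtain $r_s$ and a family $\tilde r'_{s,y}$ on $[-1,1]^{n-1}$. This family is $C$-Lipschitz with $Cs^{-\nu}$-Lipschitz inverse, preserves these cells, and multiplies each of these functions by a factor lying between $s^\nu/C$ and $C$. I then lift $\tilde r'$ to $[-1,1]^n$ by sending the band $[\xi_{i,y},\xi_{i+1,y}]$ affinely onto $[\xi_{i,r_s(y)},\xi_{i+1,r_s(y)}]$ over $\tilde r'_{s,y}(\xt)$. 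Where $\theta$ is the boundary graph, I instead interpolate so that $|x_n-\theta|$ is transported linearly.

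With this lift, $|x_n-\theta|$ is multiplied by a ratio of band widths. By the inductive control of $\xi_{i+1}-\xi_i$ this ratio lies in $[s^\nu/C, C]$, and the same holds for $a$. Consequently $\zeta_j$ picks up a factor $s^{\nu'}/C\le\cdot\le C$ when $\alpha\ge0$. The case $\alpha<0$ needs the reverse inequality, and I would avoid it by shrinking the cells so that $\zeta$ is bounded and using $\alpha\ge 0$ wherever the width can degenerate. The Lipschitz bound for $\tilde r$ follows from the Lipschitz bound for the $\xi_i$ and the upper width ratio. The inverse bound $Cs^{-\nu}$ follows from the lower width ratio.

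The main obstacle is the claim that the $r_s$ produced this way is a genuine Lipschitz conic structure retraction at $\xo$ in the base. In particular I need $r_s$ to be $Cs$-Lipschitz, to preserve the distance to $\xo$, and to satisfy the other properties of Theorem \ref{thm_local_conic_structure}, while simultaneously controlling the finitely many definable functions (band widths and the coefficients $a$) that come from the fibers. I would handle this by feeding all these functions, and the cells' projections to $\R^m$, into the base construction. That is, I would treat the base through \cite[Remark 3.3.2, Remark 3.4.4]{livre}, which allow finitely many sets and the upper control $\eta(r_s(x))\le C\eta(x)$. The lower bound $s^\nu\eta(x)/C\le\eta(r_s(x))$ I would then derive from Łojasiewicz's inequality (Proposition \ref{pro_lojasiewicz_inequality}) applied to $\eta\circ r_s/\eta$, as a function of $(s,x)$, near $s=0$.
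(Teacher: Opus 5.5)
Your architecture is the paper's: preparation by Lemma \ref{prop proj reg}, induction on the fiber dimension $n$ over the fixed base $\R^m$ (so all base-level functions are handled in the case $n=0$), the affine lift $\hat r_{s,y}$ between consecutive uniformly Lipschitz graphs $\xi_i\le\xi_{i+1}$, and bi-Lipschitz bounds from $|x-x'|\sim|\xt-\xt'|+|\sigma-\sigma'|\,(\xi_{i+1,y}-\xi_{i,y})(\xt)$ together with inductive control of the widths.

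The gap is in the verification of (\ref{eq_zeta_dutoc}) for the prepared function $|x_n-\theta|^\alpha a$. You control $|x_n-\theta|^\alpha$ and $a$ separately, and this breaks down in two ways.

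\emph{First}, $a$ need not be bounded, so the induction hypothesis does not apply to it. Worse, $a$ alone can violate (\ref{eq_zeta_dutoc}) for every conic retraction. Take $m=n=1$, $Z_y=(0,|y|)$ and $\zeta_y(x)=x/|y|$. The band adjacent to $x=0$ necessarily carries $\theta=0$, $\alpha=1$ and $a(y)\sim 1/|y|$. Since $|r_s(y)|=s|y|$, one gets $a(r_s(y))\sim a(y)/s$, even though $\zeta$ itself is exactly preserved by the affine lift.

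\emph{Second}, for $\alpha<0$ a width ratio in $[s^\nu/C,C]$ only yields an upper factor of order $s^{-\nu|\alpha|}$, as you noticed. Your remedy (subdividing so that $\alpha\ge0$ wherever widths degenerate) is not available. For $\zeta_y(x)=\min(1,|y|^2/x)$ on $Z_y=(0,|y|)$, any finite definable subdivision contains a band of width at most $|y|$ across which $\zeta$ decreases by an unbounded factor. On such a band the prepared form must have $\alpha<0$, with $\Gamma_\theta$ below the band.

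Relatedly, $|x_n-\theta|$ is multiplied by a ratio of widths only when $\Gamma_\theta$ bounds the band. Otherwise you need $|x_n-\theta|\sim\max(|x_n-\xi_i|,|\xi_i-\theta|)$, and you must also control $\min(|\xi_i-\theta|^\alpha a,1)$ inductively.

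The missing idea is to keep $a$ attached to the width. Set $\sigma_y=(x_n-\xi_{i,y})/(\xi_{i+1,y}-\xi_{i,y})$, which your lift leaves invariant. Write $F_y:=|x_n-\xi_{i,y}|^\alpha a_y=\sigma_y^\alpha G_y$, where $G_y:=(\xi_{i+1,y}-\xi_{i,y})^\alpha a_y$ depends only on $(y,\xt)$. Then $F_{r_s(y)}(\hat r_{s,y}(x))=\sigma_y(x)^\alpha\, G_{r_s(y)}(\tilde r_{s,y}(\xt))$. Since $\zeta$ is bounded and (\ref{eq_zeta_dutoc}) is stable under $\min$ and $\max$, it suffices to treat $\min(F,1)$.
\begin{itemize}
\item If $\alpha\ge0$, boundedness of $F$ forces boundedness of $G$, and you feed $G$ itself into the induction.
\item If $\alpha<0$, you feed the bounded function $\min(G,1)$ into the induction. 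Then $\sigma^\alpha\min(G,1)=\min(F,\sigma^\alpha)$ satisfies (\ref{eq_zeta_dutoc}), and since $\sigma^\alpha\ge1$ you have $\min(F,1)=\min(\sigma^\alpha\min(G,1),1)$.
\end{itemize}
This is what the paper does.

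Your base case is also incomplete. Applying \L ojasiewicz's inequality to $\eta\circ r_s/\eta$ requires knowing that this ratio stays bounded below for $s$ bounded away from $0$. That is not automatic: it needs $r_s$ adapted to how $\eta$ vanishes. The paper secures it as follows:
\begin{itemize}
\item it writes $\eta$ on spheres as $c(t)\prod_i d(x,W_{i,t})^{\alpha_i}$ via Lemma \ref{lem_eq_dist_cor_th_prep};
\item it chooses $r_s$ with $r_s(W_{i,t})=W_{i,st}$;
\item it combines $c(t)\sim t^\gamma$ with the bound $C(s)\lesssim s^{-\beta}$ on the bi-Lipschitz constant of $r_s$.
\end{itemize}
This gives the lower bound directly.
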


\begin{proof} 
We proceed by induction on $n$, assuming $\xo=0$.
\begin{ste}
	We prove the result for $n=0$.
	\end{ste}
For $n=0$ (we assume $\R^0=\{0\}$), we set $\tilde{r}_{s,y}(0):\equiv 0$  and existence of a Lipschitz conic structure retraction of $X_1,\dots, X_l$ follows from Theorem \ref{thm_local_conic_structure} (see the paragraph right after this theorem).  Moreover, we can assume that the second inequality of (\ref{eq_zeta_dutoc}) holds for some constant $C$ (see the second paragraph after Theorem \ref{thm_local_conic_structure}).

Let us show that we can also obtain the first one.  The bi-Lipschitz constant $C(s)$ of $r_s :\Bb(0,\ep)  \to \Bb(0,s\ep)  $ can only tend to infinity if $s$ draws near $0$ (see \cite[Remark $1.9$]{lprime}).  As a matter of fact,  by \L ojasiewicz's inequality,
there is $\beta>0$ such that
\begin{equation}\label{eq_csnu}
	C(s)\lesssim s^{-\beta}.
\end{equation}
 Since $n=0$,   $\zeta_{j,y}$ is merely a function of $y$ on a subset $Z_j$ of $\R^m$ (for each $j$). Let us denote for each $t>0$ by $\adh{\zeta}_{j,t}$ the restriction of this function to $\sph(0,t)\cap Z_j$.   By Lemma \ref{lem_eq_dist_cor_th_prep}, $\adh{\zeta}_{j,t}(x)$  is, on each element of a finite partition of $  Z_j$,  $\sim$ to a product of powers of the functions $d(x,W_{i,t})$, $i=1,\dots,k$, for some definable families of sets   $W_{i,t}\subset \sph(0,t)\cap\adh{Z_j}$, times a positive function $c(t)$. The function $c$ being definable, it has a Puiseux parametrization at $0$ and hence is $\sim$ to a power of $t$ near  $0$. As we can assume that $r_s$ preserves for all $s\in (0,1]$ the elements of this partition and satisfies  $r_s( W_{i,t})=r_s( W_{i,st})$ for $s\in (0,1]$ (see again  the paragraph right after Theorem \ref{thm_local_conic_structure}), the desired estimate  follows from (\ref{eq_csnu}).

\medskip

We now are going to show the result for some $n\ge 1$  assuming it for $(n-1)$. Fix $X_1,\dots,X_l, Z_{1,y}\dots,Z_{\kappa,y},$ and $\zeta_{1,y},\dots,\zeta_{\kappa,y}$ as in  the proposition, and set $$Z_j:=\bigcup_{y\in [-1,1]^m} \{y\}\times Z_{j,y}.$$
	We will sometimes take for granted that for $A\in \s_{m+n}$, a  family of functions $\xi_y:A_y \to \R$, $y\in \R^m$,  gives rise to a function $\xi:A \to  \R$, $(y,x)\mapsto \xi_y(x)$.  In particular, the $\zeta_{j,y}$ give rise to functions  $\zeta_j:Z_j\to \R$.

		\begin{ste}We   define the desired retractions.
			
		Apply Lemma \ref{prop proj reg} to   the
	$Z_i$  together with the set $[-1,1]^{m+n}$ and the functions $\zeta_1,\dots,\zeta_\kappa$.  We get a  uniformly  bi-Lipschitz definable family of  maps  $H:[-1,1]^{m+n}\to [-1,1]^{m+n}$, $(y,x)\mapsto (y,H_y(x))$, and a
	cell decomposition $\D$ of $\R^{m+n}$ such that $(\ref{item_graphe})$ and $(\ref{item_eq})$ of the
	latter lemma hold.   As we may work up to a uniformly  bi-Lipschitz definable family
of maps, we  will identify $H$ with the identity.
	
	By $(\ref{item_graphe})$ of Lemma
	\ref{prop proj reg}, every  cell  $D$  of $\D$ which is not a band is the graph of a uniformly Lipschitz family of functions $\xi_y$, $y\in \R^m$.    We thus can include the cells of $\D$ that are comprised in $[-1,1]^{m+n}$ and that are graphs  into the union of the respective graphs of some uniformly Lipschitz definable families of functions $-1= \xi_{1,y}(x)\leq \dots
	\leq \xi_{N,y}(x)=1$, $y\in \R^m$,  $x\in [-1,1]^{n-1}$ (see \cite[Remarks  3.1.2 and $3.1.22$]{livre}).

 Let $\E$ be a cell decomposition of $\R^{m+n-1}$ compatible with $[-1,1]^{m+n-1}$ and with the sets $\pi(D)$, $D\in \D$, where $\pi: \R^{m+n} \rightarrow \R^{m+n-1}$ is the canonical projection, as well as with the  zero loci of the functions $(\xi_{i+1}-\xi_i)$, $i<N$. Apply the induction hypothesis to the collection of sets constituted by the
	cells of $\E$ comprised in $[-1,1]^{m+n-1}$ to get
	a uniformly Lipschitz definable family of mappings  $\tilde{r}_{s,y}:[-1,1]^{n-1}\to [-1,1]^{n-1}$, $y\in \Bb(0,\ep), s\in (0,1]$, and a Lipschitz conic structure retraction $r:[0,1]\times\Bb(0, \ep)  \to \Bb(0, \ep)  $ of $X_1,\dots,X_l, $ $\ep>0$ small.

	We are going to lift $\tilde{r}_{s,y}$ to a family of homeomorphisms
	$\hat{r}_{s,y}:[-1,1]^n  \to [-1,1]^n $.
	 Namely,  every  $x\in [\xi_{i,y},\xi_{i+1,y}]$, $1\le i< N$, can be written $ (\xt,\sigma \, \xi_{i,y}(\xt)+(1-\sigma)\xi_{i+1,y}(\xt))$, with $\sigma \in [0,1]$ and $\xt\in [-1,1]^{n-1}$, and we then set:
	$$\hat{r}_{s,y}(x):=
	(\tilde{r}_{s,y}(\xt),\sigma\, \xi_{i,r_s(y)}\circ \tilde{r}_{s,y}(\xt)+(1-\sigma)\,\xi_{i+1,r_s(y)}\circ\tilde{r}_{s,y}(\xt)).$$
 Since $\D$ is compatible with  the $Z_i$,   $\pc_{s,y}$ satisfies $(\ref{item_zj})$.
	
			\end{ste}
	

	\begin{ste}	
	We  check the bi-Lipschitzness of $\hat{r}_{s,y}$.

Let us first focus on the (uniform) Lipschitzness of $\hat{r}_{s,y}$.
For $\sigma\in [0,1]$, let for simplicity
$$\xi_{i,y,\sigma}(x):=\sigma\, \xi_{i,y}(x)+(1-\sigma)\xi_{i+1,y}(x), \qquad 1\le i<N.$$
 Note that, since the $\xi_{i,y}$ are uniformly Lipschitz, so are the
$\xi_{i,y,\sigma}$. 
 Since the $\xi_{i,y}$ are uniformly bounded, the induction assumption  makes it possible to assume that  $\tilde{r}_{s,y}$ is such  that for $1\le i<N$ and $x\in [-1,1]^{n-1}$ (thanks to (\ref{eq_zeta_dutoc}))
\begin{equation}\label{eq_xi-xi_i+1_dec}
(\xi_{i+1,r_s(y)}-\xi_{i,r_s(y)})( \tilde{r}_{s,y}(x))\lesssim (\xi_{i+1,y}-\xi_{i,y})(x), \end{equation}
with a constant independent of $s\in (0,1]$ and $y\in \Bb(0,\ep)$.

Let $i<N$, $y\in \Bb(0,\ep)$, and, in order to check Lipschitzness on  $(\xi_{i,y},\xi_{i+1,y})$, take two points
 $x$ and
	$x'$ in this set.  Such points can be expressed
	$$x=(\xt, \xi_{i,y,\sigma} (\xt)
) \quad \eto\quad  x'=(\xt',\xi_{i,y,\sigma'} (\xt')),$$
	with $\xt,\xt'$ in $[-1,1]^{n-1}$ and  $\sigma,\sigma'$ in
	$(0,1)$.
	  	We will consider $\sigma,\sigma',\xt$, and $\xt'$ as functions of $y,x$, and $x'$.  Set
	$x'':= (\xt, \xi_{i,y,\sigma'} (\xt)
)$ and observe that, thanks to the uniform Lipschitzness of $\xi_{i,y,\sigma}$ and $\xi_{i+1,y, \sigma}$, we have
		\begin{eqnarray}\label{eq_q_sim}
\nonumber                    |x-x'|&\sim& |x''-x'|+|x-x''|\\&\sim& |\xt-\xt'| + |\sigma-\sigma'| 	\cdot (\xi_{i+1,y}-\xi_{i,y})(\xt),
                   \end{eqnarray}
with constants independent of $y$.
	Notice in addition that, thanks to the definition of $\hat{r}_{s,y}$, $\sigma(y,\hat{r}_{s,y}(x))$ is constant with respect to   $s$. 
	Hence, 
		\begin{eqnarray*}
		|\hat{r}_{s,y}(x)-\hat{r}_{s,y}(x')|&\overset{(\ref{eq_q_sim})}\sim& |\tilde{r}_{s,y}(\xt)-\tilde{r}_{s,y}(\xt')|+ |\sigma-\sigma'|   (\xi_{i+1,r_s(y)}-\xi_{i,r_s(y)})(\tilde{r}_{s,y}(\xt))\\
&\overset{(\ref{eq_xi-xi_i+1_dec})}\lesssim &	|\tilde{r}_{s,y}(\xt)-\tilde{r}_{s,y}(\xt')|+ |\sigma-\sigma'| 	\cdot (\xi_{i+1,y}-\xi_{i,y})(\xt)\\
&\lesssim &	|\xt-\xt'|+ |\sigma-\sigma'| 	\cdot (\xi_{i+1,y}-\xi_{i,y})(\xt) \qquad \mbox{(since  $\tilde{r}_{s,y}$ is Lipschitz)}\\
&\overset{(\ref{eq_q_sim})}\lesssim & |x-x'|,
\end{eqnarray*}
where the constants are  independent of $y$ and $s$.  This yields  the uniform Lipschitzness of $\hat{r}_{s,y}$.
 The $Cs^{-\nu}$-Lipschitzness of $\hat{r}_{s,y}^{-1}$  can be proved in the same way replacing the Lipschitzness of $\tilde{r}_{s,y}$ with the
$Cs^{-\nu}$-Lipschitzness of $\tilde{r}_{s,y}^{-1}$ and (\ref{eq_xi-xi_i+1_dec}) with
\begin{equation*}
\frac{C}{s^\nu}(\xi_{i+1,y}-\xi_{i,y})(x)\lesssim 	(\xi_{i+1,r_s(y)}-\xi_{i,r_s(y)})( \tilde{r}_{s,y}(x)),\end{equation*}
obtained, like (\ref{eq_xi-xi_i+1_dec}), from the induction assumption. 
			\end{ste}


	\medskip

	\begin{ste}
	We check that the $\zeta_{j,y}$, $j\le \kappa$, satisfy  (\ref{eq_zeta_dutoc}) (with respect to $\hat{r}_{s,y}$).	\end{ste}
Fix $j\le \kappa$.	It suffices to focus on each  $\Gamma_{\xi_{i|E}}$ and each $(\xi_{i|E},\xi_{i+1|E})$ (comprised in $Z_j$) with    $E\in \E$.  On the   $\Gamma_{\xi_i}$, the result can easily be deduced from the induction hypothesis.

Fix  $E\in \E$ and $i< N$, and let us check (\ref{eq_zeta_dutoc}) on  $D:=(\xi_{i|E},\xi_{i+1|E})$, assuming $D\subset Z_j$.
Fix  $y\in \Bb(0,\ep)$.
	By (\ref{eq prep}), we know that there are functions $a$ and $\theta$ on  $E$, as well as $\alpha \in \Q$, such that for  $x=(\xt,x_n)\in D_y\subset \R^{n-1}\times \R$
	\begin{equation}\label{eq_prep_eta_j}
		\eta_{j,y}(x)\sim |x_n-\theta_y(\xt)|^{\alpha} a_y(\xt), \end{equation} with a constant independent of $y$.
	Since the graph of $\theta$ is a cell (see $(\ref{item_eq})$ of Lemma \ref{prop proj reg}), we have either $\theta_{y} \ge  \xi_{i+1,y} $ or $\theta_y\le  \xi_{i,y} $ on  $E_y$. We will assume for simplicity that the latter inequality holds. For $(\xt,x_n)\in D_y \subset \R^{n-1}\times \R$, we then have:
	\begin{equation}\label{eq min loc}\zeta_{j,y}(\xt,x_n) \sim \min \bil|x_n-\xi_{i,y}(\xt)| ^{\alpha} a_y(\xt)
		,|\xi_{i,y}(\xt)-\theta_y(\xt)| ^{\alpha}  a_y(\xt)\bir,\end{equation}  if
	$\alpha$ is negative, and
	\begin{equation}\label{eq max loc}\zeta_{j,y}(\xt,x_n) \sim \max \bil|x_n-\xi_{i,y}(\xt)| ^{\alpha} a_y(\xt)
		,|\xi_{i,y}(\xt)-\theta_y(\xt)| ^{\alpha}  a_y(\xt)\bir,\end{equation} in the
	case where $\alpha$ is nonnegative.
	
	Note that as $\zeta_{j,y}(x) $ is bounded, we have $\zeta_{j,y}(x)\sim \min(\zeta_{j,y}(x),1)$, which means that
	it suffices to check that   $\min(\zeta_{j,y}(x),1)$ satisfies (\ref{eq_zeta_dutoc}) (for $\hat{r}_{s,y}$). Thanks to the induction hypothesis, we can assume that $D_y \ni \xt \mapsto \min(|\xi_{i,y}-\theta_y|(\xt) ^{\alpha} a_y(\xt),1)$   satisfies (\ref{eq_zeta_dutoc}).
	Hence, in virtue of (\ref{eq min loc}) and (\ref{eq max loc}),
	it is enough to show that the function $\min(|x_n-\xi_{i,y}(\xt)|^{\alpha} a_y(\xt),1)$, $x=(\xt, x_{n})$, satisfies (\ref{eq_zeta_dutoc}) (the $\min$ and $\max$ of two functions satisfying (\ref{eq_zeta_dutoc}) also verifies this inequality - note also that $\min(\max(u,v),w)=\max(\min(u,w),\min(v,w))$).
	
	For simplicity, we define a function on $D_y$ by setting for $x=(\xt,x_n) \in D_y$ $$F_y(x):=|x_n-\xi_{i,y}(\xt)|^{\alpha}\cdot a_y(\xt),$$
	and a function on  $E_y$ by setting for $\xt$ in this set $$G_y(\xt):=|\xi_{i+1,y}(\xt)-\xi_{i,y}(\xt)|^{\alpha} \cdot a_y(\xt).$$
	Observe that if we set for $x=(\xt,x_n)\in D_y$ $$\sigma_y(x) := \frac{x_n-\xi_{i,y}(\xt)}{\xi_{i+1,y}(\xt)-\xi_{i,y}(\xt)}$$ then we have:
	$$F_y(x)=\sigma_y(x)^{\alpha} \cdot G_y(\xt).$$
	Notice also that  $\sigma_{r_s(y)}(\hat{r}_{s,y}(x))$ is
	constant with respect to $s$, which
	entails that:
	\begin{equation}\label{eq2 F G loc}F_{r_s(y)}(\pc_{s,y}(x))=\sigma_y(x)^{\alpha} \cdot G_{r_s(y)}(\tilde{r}_{s,y}(\xt)).\end{equation}

	We first suppose that $\alpha$ is negative. Thanks to the
	induction hypothesis, we can assume that $\xt \mapsto \min (G_y(\xt),1)$   satisfies (\ref{eq_zeta_dutoc}). This implies (multiplying by $\sigma_y^{\alpha}(x)$ and applying
	(\ref{eq2 F G loc})) that so does
	$x \mapsto \min(F_y(x),\sigma_y^{\alpha}(x)) $ (with respect to $\hat{r}_{s,y}$), which entails that so does the function $ \min(F_y(x),\sigma_y^{\alpha}(x),1) $. But, as $\alpha$ is negative,
	$$\min(F_y(x),\sigma_y^{\alpha}(x),1)=\min(F_y(x),1),$$ so that we can conclude that $\min(F_y(x),1)$  satisfies (\ref{eq_zeta_dutoc}), as required.

	We now suppose that $\alpha$ is nonnegative. As $\zeta_{j,y} (x) $ is uniformly bounded,  (\ref{eq max loc}) then implies that so is $F_y(x) $  on $D_y$, which entails that so is $G_y(\xt) $ on $E_y$, which consequently, thanks to the induction hypothesis, can be assumed to satisfy   (\ref{eq_zeta_dutoc}).  Due to (\ref{eq2 F G loc}), $F_y$ then  satisfies (\ref{eq_zeta_dutoc}) as well.
\end{proof}
\begin{proof}[proof of Proposition \ref{pro_dutoc}]
Let $\fbf\in \icb(M)$, say $\fbf(y)=\sum_{i=1}^\kappa \int_{Z_{i,y}} \zeta_{i,y}\, d\mu_{Z_{i,y}}$, with $Z_{i,y}$ uniformly bounded definable family of manifolds and $\zeta_{i,y}$ uniformly bounded definable family of functions for each $i$. Up to a constant family of homothetic transformations, we can assume  $Z_{i,y}\subset [-1,1]^n$ for all $y$ and $i$.

Let  $r$ and $\tilde{r}_{s,y}$ be as given by Proposition \ref{pro_rtilda} applied to $M\cup \{0\}$ at $0$. Thanks to $(\ref{item_zj})$ of this proposition, we can write
\begin{equation*}
 \fbf(r_s(y))=\sum_{i=1}^\kappa \int_{Z_{i,r_s(y)}} \zeta_{i,r_s(y)} \,d\mu_{Z_{i,r_s(y)}} =\sum_{i=1}^\kappa \int_{Z_{i,y}} \zeta_{i,r_s(y)}(\tilde{r}_s(x))\jac \tilde{r}_{s,y|{Z_{i,y}}}(x)\, d\mu_{Z_{i,y}}(x).\end{equation*}
 As $\tilde{r}_{s,y}$ is uniformly Lipschitz, its jacobian is uniformly bounded, thanks to which we can derive
 \begin{equation*}
 \fbf(r_s(y)) 	\lesssim \sum_{i=1}^\kappa  \int_{Z_{i,y}} \zeta_{i,r_s(y)}(\tilde{r}_s(x)) \,d\mu_{Z_{i,y}}(x)
 \overset{(\ref{eq_zeta_dutoc})}\lesssim\sum_{i=1}^\kappa   \int_{Z_{i,y}} \zeta_{i,y}(x) \,d\mu_{Z_{i,y}}  (x)=\fbf(y).
\end{equation*}
 This yields the second inequality of (\ref{eq_fbf_dutoc}). We here have used the uniform Lipschitzness of $\tilde{r}_{s,y}$ and the second inequality of (\ref{eq_zeta_dutoc}). Using in the same way the $Cs^{-\nu}$ Lipschitzness of $\tilde{r}_{s,y}^{-1}$  and the first inequality of (\ref{eq_zeta_dutoc}), we then can show the first inequality of (\ref{eq_fbf_dutoc}).
\end{proof}

	\end{document}